\documentclass{amsart}
\usepackage{mathrsfs}
\usepackage{graphicx}

\usepackage{amssymb}
\usepackage{amsmath}
\usepackage{amsthm}
\usepackage{galois}
\usepackage[all,cmtip]{xy}



\newtheorem{thm}{Theorem}[section]
\newtheorem{cor}[thm]{Corollary}
\newtheorem{lem}[thm]{Lemma}
\newtheorem{prop}[thm]{Proposition}

\newtheorem{conj}[thm]{Conjecture}
\theoremstyle{definition}
\newtheorem{defn}[thm]{Definition}

\theoremstyle{remark}
\newtheorem{rem}[thm]{Remark}
\numberwithin{equation}{section}

\newcommand{\Z}{\mathbb Z}

\newcommand{\fix}{\mathrm{Fix}\,}
\newcommand{\eq}{\mathrm{Eq}\,}

\newcommand{\rk}{\mathrm{rk}}

\newcommand{\aut}{\mathrm{Aut}}
\newcommand{\edo}{\mathrm{End}}
\newcommand{\homo}{\mathrm{Hom}}

\newcommand{\cen}{\mathrm{Cen}}
\newcommand{\supp}{\mathrm{supp}}


\newcommand{\B}{\mathcal{B}}




\begin{document}

\title{Fixed subgroups are compressed in surface groups}

\author{Qiang Zhang}
\address{School of Mathematics and Statistics, Xi'an Jiaotong University, Xi'an 710049,
CHINA}
\email{zhangq.math@mail.xjtu.edu.cn}

\author{Enric Ventura}
\address{Department of Applied Mathematics III, Universitat Polit\`ecnica de Catalunya,
Manresa, Barcelona 08242, CATALONIA} \email{enric.ventura@upc.edu}

\author{Jianchun Wu}
\address{Department of Mathematics, Soochow University, Suzhou 215006, CHINA}
\email{wujianchun@suda.edu.cn}


\subjclass{20F65, 20F34, 57M07}

\keywords{Fixed subgroups, intersections, free groups, surface groups, direct products,
inertia, compression}

\begin{abstract}
For a compact surface $\Sigma$ (orientable or not, and with boundary or not) we show
that the fixed subgroup, $\fix\B$, of any family $\B$ of endomorphisms of
$\pi_1(\Sigma )$ is compressed in $\pi_1(\Sigma )$ i.e., $\rk(\fix\B)\leqslant \rk(H)$
for any subgroup $\fix\B\leqslant H\leqslant \pi_1(\Sigma )$. On the way, we give a
partial positive solution to the inertia conjecture, both for free and for surface
groups. We also investigate direct products, $G$, of finitely many free and surface
groups, and give a characterization of when $G$ satisfies that $\rk(\fix
\phi)\leqslant \rk(G)$ for every $\phi\in \aut(G)$.
\end{abstract}
\maketitle

\section{Introduction}

For a finitely generated group $G$, let $\rk(G)$ denote the rank (i.e., the minimal
number of the generators) of $G$. There are lots of research works in the literature
about ranks of groups and, in particular, about controlling the rank of the intersection
of subgroups of $G$ in terms of their own ranks. An interesting case is when $G=F_r$ is
a finitely generated free group (of rank $r$): more than half a century ago H.~Neumann,
see~\cite{N1,N2}, conjectured that, for any two finitely generated subgroups $A$ and $B$
of $G$,
 $$
\rk(A\cap B)-1 \leqslant (\rk(A)-1)(\rk(B) -1).
 $$
After several partial results in this direction (for example, G.~Tardos~\cite{T} showed
it assuming either $A$ or $B$ of rank 2), this was finally proved in full generality
independently by J.~Friedman~\cite{F}, and by I.~Mineyev~\cite{M} (see also a simplified
version by W.~Dicks~\cite{D}).

Before this celebrated result was proved it had been shown that, for some special
subgroups in some special situations, one could say even more about their intersections;
this specially applies to fixed subgroups. For the case of finitely generated free
groups, there is a lot of research concerning properties of the subgroups fixed by
automorphisms or endomorphisms; today, the main open problem in this direction is
Conjecture~\ref{conj} below. Some of these results had been translated into surface
groups, where the situation is similar.

In the present paper, we give a new partial result in the direction of
Conjecture~\ref{conj} for finitely generated free groups, we prove some new results for
the case of surface groups, and we study the same problems in the family of groups
obtained by finitely many direct products of free and surface groups (where, as far as
we know, nothing was previously investigated in this direction). In this larger context,
the situation is much reacher, and new algebraic phenomena show up.

Let us first establish some notation and review the main results known about fixed
subgroups in finitely generated free and surface groups.

For two groups $G$ and $H$, let us denote the set of homomorphisms from $G$ to $H$ by
$\homo(G,H)$. Also, let us denote the set of endomorphisms of $G$ by $\edo(G)$, and the
set of automorphisms of $G$ by $\aut(G)$. For an arbitrary family $\B\subseteq
\homo(G,H)$, the \emph{equalizer} of $\B$ is
 $$
\eq \B :=\{g\in G \mid \beta_1(g)=\beta_2(g),\,\, \forall \beta_1,\beta_2\in \B\}\leqslant
G.
 $$
Similarly, for an arbitrary family $\B\subseteq \edo(G)$, the \emph{fixed subgroup} of
$\B$ is
 $$
\fix \B :=\{g\in G \mid \phi(g)=g, \forall \phi\in \B\} =\cap_{\phi\in
\mathcal{B}} \fix\phi \leqslant G.
 $$
Note that, if $G$ is a subgroup of $H$ and $\B\subseteq \homo(G,H)$ contains the inclusion
of $G$ in $H$ then, $\eq\B =\fix\B$.

In~\cite{BH}, Bestvina--Handel solved the famous Scott's conjecture:

\begin{thm}[Bestvina--Handel, \cite{BH}]\label{BH}
For any automorphism $\phi$ of a finitely generated free group $F$,
 $$
\rk(\fix\phi) \leqslant \rk(F).
 $$
\end{thm}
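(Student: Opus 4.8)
The plan is to translate this purely algebraic statement into a topological and dynamical one, following the train-track philosophy. First I would realize $F$ as the fundamental group $\pi_1(\Gamma,v)$ of a finite connected marked graph $\Gamma$, so that $\rk(F)=1-\chi(\Gamma)$, and represent $\phi$ by a homotopy equivalence $f\colon\Gamma\to\Gamma$ fixing the vertex $v$ and inducing $\phi$ on $\pi_1(\Gamma,v)$. (Since $\fix\phi$ depends on $\phi$ itself and not merely on its outer class, the basepoint-fixing is essential and is arranged by composing the realization of the outer class with a suitable path.) Under this dictionary an element of $\fix\phi$ is exactly the homotopy class rel $v$ of a loop $\gamma$ at $v$ with $f\circ\gamma\simeq\gamma$, so that $\fix\phi$ becomes the subgroup of $\pi_1(\Gamma,v)$ carried by the fixed-point structure of $f$.

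The difficulty is that an arbitrary representative $f$ produces uncontrolled cancellation when iterated, so a fixed loop need not correspond to an honest fixed point of $f$. To remedy this I would replace $f$ by a good representative using the Bestvina--Handel folding and collapsing procedures: for irreducible $\phi$ one obtains a genuine train-track map, in which no cancellation occurs in the iterates $f^k$ along edges, and in general a relative train-track map filtered by invariant subgraphs. The purpose of this step is to secure \emph{bounded cancellation}, which guarantees that every element of $\fix\phi$ is represented by a reduced fixed loop built from fixed edges and finitely many \emph{indivisible Nielsen paths}, and that the fixed points of $f$, together with the attracting fixed ends of $\partial f$ on $\partial F$, form a combinatorially tame finite set. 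I expect this normal-form step, and the control of cancellation it yields, to be the main obstacle: the real weight of the argument lies in proving that $f$ can be arranged so that no cancellation destroys fixed loops and that $\fix\phi$ is finitely generated and captured by fixed edges plus finitely many Nielsen paths.

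Once cancellation is tamed, the bound becomes a counting statement. I would assign to each fixed-point class an \emph{index}, in the spirit of a Poincar\'e--Hopf count on $\Gamma$: each generator of $\fix\phi$ and each $\fix\phi$-orbit of attracting fixed ends contributes positively, while the total index is constrained by the Euler characteristic of $\Gamma$. This yields an index inequality of the form $\rk(\fix\phi)+\tfrac12 A\leqslant \rk(F)$, where $A$ counts the $\fix\phi$-orbits of attracting fixed ends. Discarding the nonnegative term $\tfrac12 A$ gives $\rk(\fix\phi)\leqslant\rk(F)$, as desired. Compared with the normal-form step, this final Euler-characteristic bookkeeping is comparatively clean, so essentially all the difficulty is front-loaded into producing and analysing the train-track representative.
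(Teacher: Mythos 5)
You should be aware at the outset that the paper contains no proof of this statement: Theorem~\ref{BH} is imported verbatim from Bestvina--Handel \cite{BH} as background, and everything later in the paper (e.g.\ the proofs of Theorems~\ref{main free} and~\ref{main surface}, via Theorem~\ref{inj fixed subgp of free gp inert}) uses it as a black box, so there is no internal argument to compare yours against. Measured against the actual cited source, your outline is a faithful reconstruction of the train-track strategy: realize $F=\pi_1(\Gamma,v)$ with $f\colon\Gamma\to\Gamma$ inducing $\phi$ (with the basepoint issue correctly flagged, since $\fix\phi$ is not an invariant of the outer class), pass to a (relative) train-track representative to tame cancellation, show $\fix\phi$ is carried by fixed edges together with finitely many indivisible Nielsen paths, and conclude by an Euler-characteristic count. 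Two caveats are worth recording. First, the clean inequality you invoke at the end, $\rk(\fix\phi)+\tfrac{1}{2}A\leqslant \rk(F)$ with $A$ counting $\fix\phi$-orbits of attracting fixed ends on $\partial F$, is not how Bestvina--Handel themselves conclude; that index formulation is the later sharpening due to Gaboriau--Jaeger--Levitt--Lustig (a graph-theoretic index in the same spirit appears in Dicks--Ventura \cite{DV}), whereas the original argument does its bookkeeping through \emph{stable} relative train-track maps and a direct analysis of Nielsen paths --- both routes yield $\rk(\fix\phi)\leqslant\rk(F)$, so this is a harmless substitution of a stronger known result. Second, as you yourself acknowledge, all of the genuine difficulty (existence of stable relative train-track representatives, bounded cancellation, finiteness and structure of the Nielsen paths) is stated as a plan rather than discharged, so what you have is a correct roadmap to the known proof, not a complete or independent argument; for the purposes of this paper, which only consumes the statement, that distinction is immaterial.
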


Later, Dicks--Ventura~\cite{DV} introduced the notions of inertia and compressedness,
and proved the following stronger result.

\begin{defn}
Let $G$ be an arbitrary finitely generated group. A subgroup $H\leqslant G$ is
\emph{inert in $G$} if $\rk(K\cap H)\leqslant \rk(K)$ for every (finitely generated)
subgroup $K\leqslant G$. A subgroup $H\leqslant G$ is \emph{compressed in $G$} if
$\rk(H)\leqslant \rk(K)$ for every (finitely generated) subgroup $K$ with $H\leqslant
K\leqslant G$.
\end{defn}

Note that the family of inert subgroups of $G$ is closed under finite intersections
(and, in the case of a free group ambient $G=F_r$, even closed under arbitrary
intersections by a standard argument on a descending chain of subgroups,
see~\cite[Corollary~ I.4.13]{DV}). Clearly, if $A$ is inert in $G$, then it is
compressed in $G$ and, in particular, $\rk(A)\leqslant \rk(G)$. It is not known in
general whether the converse is true for free groups, see~\cite[Problem~1]{DV} (and
also~\cite{V} as the ``compressed-inert conjecture").

The main result in~\cite{DV} is the following:

\begin{thm}[Dicks--Ventura, \cite{DV}]\label{inj fixed subgp of free gp inert}
Let $F$ be a finitely generated free group, and let $\B\subseteq \edo(F)$ be a family of
injective endomorphisms of $F$. Then the fixed subgroup $\fix\B$  is inert in $F$.
\end{thm}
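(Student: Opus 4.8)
The plan is to strip the problem down to a single inequality about equalizers and then to prove that inequality by adapting the train-track technology behind Theorem~\ref{BH}. First I would reduce from a family to a single endomorphism. Since $\fix\B=\cap_{\phi\in\B}\fix\phi$ and, in a finitely generated free group, the class of inert subgroups is closed under arbitrary intersection (the property recalled just before the statement), it suffices to show that $\fix\phi$ is inert for every individual injective endomorphism $\phi$. So fix such a $\phi$, fix an arbitrary finitely generated $K\leqslant F$, and let $\iota\colon K\hookrightarrow F$ be the inclusion. The key observation is that $K\cap\fix\phi=\{k\in K\mid\phi\iota(k)=\iota(k)\}=\eq(\iota,\phi\iota)$, the equalizer of the two homomorphisms $\iota,\phi\iota\colon K\to F$, both of which are injective because $\phi$ is. Hence the whole theorem follows once I establish the uniform bound
\[
\rk\big(\eq(\alpha,\beta)\big)\leqslant\rk(K)
\]
for every pair of injective homomorphisms $\alpha,\beta\colon K\to F$ of finitely generated free groups. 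This reformulation is convenient: it is symmetric in $\alpha$ and $\beta$ and no longer mentions $\fix$, so it subsumes the single-endomorphism case in a clean form.

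For the core inequality I would pass to topology. Realise $K$ and $F$ as the fundamental groups of finite graphs $\Gamma$ and $\Delta$, and realise $\alpha,\beta$ by graph maps $f,g\colon\Gamma\to\Delta$ sending vertices to vertices and edges to edge-paths. After tightening the maps I would like $f$ and $g$ to behave like immersions on $\Gamma$, so that reduced paths map with no forced backtracking; injectivity of $\alpha,\beta$ is exactly what makes each map individually homotopic to an immersion (the associated folded graph being the core of $\alpha(K)$, respectively $\beta(K)$). An element of $\eq(\alpha,\beta)$ is represented by a reduced loop $\ell$ in $\Gamma$ with $f(\ell)$ and $g(\ell)$ homotopic rel basepoint in $\Delta$, i.e.\ with $f(\ell)\,\overline{g(\ell)}$ null-homotopic. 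Encoding these coincidences-up-to-homotopy as the fundamental group of an auxiliary graph — a fiber product that records, along a lift, the homotopy between $f(\ell)$ and $g(\ell)$ — the desired inequality becomes a bound on the rank, equivalently on $-\chi$, of that coincidence graph. The template is the proof of Theorem~\ref{BH}: there one analyses the fixed points of a single train-track self-map and bounds $\rk(\fix\phi)$ by an index/Euler-characteristic count, and here I would run the analogous count for the coincidence locus of the pair $(f,g)$.

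The main obstacle is precisely this final Euler-characteristic count, and it is genuinely harder than in the single-map case. First, $f$ and $g$ must be normalised \emph{simultaneously} on the same domain $\Gamma$, whereas the folding that tightens $f$ and the folding that tightens $g$ pull $\Gamma$ in different directions; one needs a joint normal form for the pair rather than two separate immersions. Second, even with both maps tight, the product $f(\ell)\,\overline{g(\ell)}$ can cancel over long stretches, so a priori the coincidence locus can carry more topology than $\Gamma$ does. The heart of the proof must therefore be a train-track / relative-immersion theory for two maps, strong enough to forbid these long cancellations, together with the bookkeeping showing that each independent coincidence loop consumes at least as much Euler characteristic in $\Gamma$ as it produces. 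Injectivity of $\alpha$ and $\beta$ is the hypothesis that should make such a normal form attainable; establishing it, and extracting the sharp constant so that the bound comes out as $\rk(K)$ rather than a weaker linear estimate, is where the real work lies.
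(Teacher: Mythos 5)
Your opening reductions are correct, but they are the standard, easy part of the story: closure of inertia under arbitrary intersections (the Takahasi-type descending-chain argument of \cite[Corollary~I.4.13]{DV}) does reduce the family $\B$ to a single injective endomorphism $\phi$, and the identity $K\cap\fix\phi=\eq(\iota,\phi\iota)$ is a correct reformulation. The genuine gap is everything after that. Note first that the paper itself offers no proof of this statement: it is imported wholesale from the Dicks--Ventura monograph \cite{DV}, whose entire content --- a book-length extension of the Bestvina--Handel relative train track machinery from automorphisms to injective endomorphisms, together with the bookkeeping that yields inertia rather than just $\rk(\fix\phi)\leqslant\rk(F)$ --- is exactly what your last two paragraphs gesture at. What you label ``where the real work lies'' is not a final estimate to be extracted from Theorem~\ref{BH}; it is the whole theorem, and no amount of reformulation makes it shorter.

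Worse, your reformulated target is strictly \emph{stronger} than the statement to be proved, and it is not known to be true. For an arbitrary pair of monomorphisms $\alpha,\beta\colon K\to F$, the equalizer is known to be finitely generated (Goldstein--Turner), but the inequality $\rk(\eq(\alpha,\beta))\leqslant\rk(K)$ for general pairs is, to the best of current knowledge, an open problem --- the only general-pair equalizer result the present paper can quote is Bergman's Theorem~\ref{intersection of retracts of free gp}, which concerns the very special case of sections of an epimorphism. The symmetry you praise destroys precisely the structure that \cite{DV} exploit: with $\alpha=\iota$ one is in a self-map situation where a (relative) train track representative of $\phi$ on a single graph exists, whereas for two unrelated maps there is no simultaneous normal form, and no known two-map train track theory forbids the long cancellations in $f(\ell)\,\overline{g(\ell)}$ --- an obstacle you correctly identify but do not overcome. (A smaller inaccuracy: $\pi_1$-injectivity does not make a graph map homotopic to an immersion \emph{on the same domain}; Stallings folds are $\pi_1$-preserving quotients that change the domain graph, so even your single-map normalization is more delicate than stated.) In short: retreat to the special case $\alpha=\iota$ and you have merely restated the theorem; insist on the general pair and you have replaced a known hard theorem by an open problem. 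Either way the core of the proof is missing.
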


As far as we know, the same statement for endomorphisms is still an open problem,
conjectured by Dicks--Ventura in~\cite[Problem~2]{DV} (see also~\cite{V} as the
``inertia conjecture").

\begin{conj}\label{conj}
For an arbitrary family of endomorphisms $\B\subseteq \edo(F)$ of a finitely generated
free group $F$, $\fix\B$ is inert in $F$.
\end{conj}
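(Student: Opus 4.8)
The plan is to reduce the conjecture, in two steps, to the injective case already settled in Theorem~\ref{inj fixed subgp of free gp inert}, and then to isolate the one point where the reduction fails to close.

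First I would reduce arbitrary families to a single endomorphism. Since $\fix\B=\bigcap_{\phi\in\B}\fix\phi$ and, in a free group, the inert subgroups are closed under arbitrary intersections (recalled above, after \cite[Corollary~I.4.13]{DV}), it suffices to prove that $\fix\phi$ is inert in $F$ for each individual $\phi\in\edo(F)$. If $\phi$ is injective we are done by Theorem~\ref{inj fixed subgp of free gp inert}, so assume $\phi$ is not injective. The second step then replaces $\phi$ by an injective endomorphism of a smaller free group, via its stable image. Put $F_n:=\phi^n(F)$, giving a descending chain $F=F_0\geqslant F_1\geqslant\cdots$ of finitely generated free groups with $\phi(F_n)=F_{n+1}$. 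The ranks $\rk(F_n)$ are non-increasing, hence eventually equal to some $s$ for all $n\geqslant N$; for such $n$ the surjection $\phi\colon F_n\to F_{n+1}$ between free groups of equal finite rank is an isomorphism (free groups are Hopfian), so $\phi|_{F_N}$ is an \emph{injective} endomorphism of $F_N$. As every fixed point lies in $\im(\phi^n)=F_n$ for all $n$, we get $\fix\phi\leqslant F_N$ and $\fix\phi=\fix(\phi|_{F_N})$; hence Theorem~\ref{inj fixed subgp of free gp inert} applies to $\phi|_{F_N}$ and shows that $\fix\phi$ is inert \emph{in $F_N$}. Because inertia is transitive and $\fix\phi\leqslant F_N$, the proof would be complete as soon as $F_N$ is inert in $F$: for any finitely generated $K\leqslant F$ one would have $\rk(K\cap\fix\phi)=\rk((K\cap F_N)\cap\fix\phi)\leqslant\rk(K\cap F_N)\leqslant\rk(K)$.

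A noninjective $\phi$ has $\rk(\phi(F))<\rk(F)$ by Hopficity, whence $s=\rk(F_N)\leqslant\rk(\phi(F))<\rk(F)$. When $s\leqslant 2$ the Hanna Neumann inequality shows that $F_N$, being of rank at most $2$, is automatically inert in $F$, and the argument closes; in particular this already proves the conjecture whenever $\rk(F)\leqslant 3$, since then $s\leqslant\rk(F)-1\leqslant 2$.

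The main obstacle is precisely the remaining range $3\leqslant s<\rk(F)$ (so $\rk(F)\geqslant 4$): a finitely generated subgroup of rank $\geqslant 3$ need not be inert, and rank alone says nothing about $F_N=\phi^N(F)$. Here I expect that one must exploit the extra structure that $\phi$ carries $F_N$ isomorphically onto $F_{N+1}\leqslant F_N$ --- an ``eventually periodic'', self-similar image rather than an arbitrary subgroup --- in order to prove that such stable images are inert in $F$. Failing that, one would have to bypass the detour through $F_N$ entirely and analyse $\fix\phi$ as a subgroup of $F$ directly, for instance through the train-track methods behind Theorem~\ref{BH}, or produce for each noninjective $\phi$ a genuinely injective endomorphism of $F$ itself with the same fixed subgroup. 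Establishing inertia of these stable images is, to my mind, the crux on which the full conjecture turns.
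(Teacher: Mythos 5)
First, note that the statement you are proving is precisely Conjecture~\ref{conj}, which the paper poses as an open problem and does not prove; so no complete argument exists to compare against, and your proposal, as you yourself say, is a reduction plus an honestly flagged gap rather than a proof. Within that scope your reasoning is correct: reducing families to single endomorphisms via closure of inert subgroups under arbitrary intersections in a free ambient is legitimate (the paper recalls this from \cite[Corollary~I.4.13]{DV}); the stable-image step is sound (the ranks $\rk(\phi^n(F))$ are non-increasing, Hopficity forces $\phi$ to restrict to an injective endomorphism of $F_N=\phi^N(F)$, every fixed point lies in $F_N$, and Theorem~\ref{inj fixed subgp of free gp inert} plus transitivity of inertia finishes \emph{provided} $F_N$ is inert in $F$); and the case $s\leqslant 2$ via the Hanna Neumann inequality (already Tardos~\cite{T} for rank $2$) is right, yielding the conjecture for $\rk(F)\leqslant 3$. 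Moreover, the crux you isolate --- inertia in $F$ of the stable image --- is exactly the hypothesis the paper cannot remove either: Corollary~\ref{cor free} proves $\fix\B$ inert in $\beta_0(F)$ and must \emph{assume} $\beta_0(F)$ inert in $F$ to conclude.

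The comparison with the paper's partial results (Section~\ref{free}) is instructive. For a single endomorphism your $\phi^N$ is the paper's minimal-rank element $\beta_0\in\langle\B\rangle$ specialized to the cyclic monoid $\langle\phi\rangle$, your ``$\fix\phi$ inert in $F_N$'' is Corollary~\ref{cor free}, and your $s\leqslant 2$ case is Corollary~\ref{cor free 2}; here your shortcut is genuinely simpler, since $\phi$ maps $F_N$ into itself and Bergman's Theorem~\ref{intersection of retracts of free gp} on equalizers of sections can be bypassed. But in the family setting your element-wise reduction is strictly weaker than the paper's monoid argument: the paper needs only \emph{some} composition of members of $\B$ (possibly mixing distinct members) to have image of rank $1$ or $2$, whereas you need every non-injective member individually to have an inert stable image. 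For instance, $\B=\{\phi_1,\phi_2\}$ with both stable images of rank $5$ but $\rk(\phi_1\phi_2(F))=2$ is covered by Corollary~\ref{cor free 2} yet escapes your reduction. Also, the paper's Theorem~\ref{main free} proves the relative inequality for every $K$ with $\beta_0(K)\cap\fix\B\leqslant K$, which yields compression of $\fix\B$ (Theorem~\ref{compression of fixed subgp in free gp}) as a free by-product; your route gives no compression statement. So: correct as far as it goes, same crux as the paper, but in the family case you should replace the intersection-closure reduction by the minimal-rank element of the whole monoid $\langle\B\rangle$ to recover the paper's full strength.
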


Few progress has been done in this interesting direction during the last twenty years:
only two later results gave supporting evidence to this conjecture. The first one was
given by G.~Bergman in~\cite{B}, where he extended Bestvina--Handel's result to
arbitrary families of endomorphisms:

\begin{thm}[Bergman, \cite{B}]
Let $F$ be a finitely generated free group, and let $\B \subseteq \edo(F)$. Then, $\rk
(\fix\B)\leqslant\rk(F)$.
\end{thm}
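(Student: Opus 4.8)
The plan is to reduce the general statement to the case of families of injective endomorphisms, which is already covered by Theorem~\ref{inj fixed subgp of free gp inert}. The guiding idea is that fixed points are forced to lie in the iterated images of the maps, where non-injectivity disappears. Throughout write $r=\rk(F)$, and let $M=\langle \B\rangle\leqslant \edo(F)$ be the submonoid generated by $\B$ together with the identity; since a point fixed by every $\phi\in\B$ is fixed by every composition of such maps, $\fix\B=\fix M$.

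First I would locate a good ``stable image''. For each $\psi\in M$ the subgroup $\psi(F)$ is a homomorphic image of $F$, hence free of rank at most $r$; let $k=\min\{\rk(\psi(F)):\psi\in M\}\leqslant r$ and fix $\psi_0\in M$ achieving $\rk(\psi_0(F))=k$. Put $G=\psi_0(F)$, a free group of rank $k\leqslant r$. Since every element of $\fix M$ is fixed by $\psi_0$, we get $\fix\B=\fix M\subseteq \psi_0(F)=G$, and therefore $\fix\B=\fix\B\cap G$. The key step is then to show that every $\phi\in\B$ restricts to an \emph{injective} map on $G$. Given $\phi\in\B$, the composite $\phi\circ\psi_0$ again lies in $M$, so $\rk(\phi(G))=\rk((\phi\circ\psi_0)(F))\geqslant k$ by minimality; on the other hand $\phi(G)$ is a homomorphic image of the rank-$k$ group $G$, so $\rk(\phi(G))\leqslant k$. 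Hence $\phi|_G\colon G\twoheadrightarrow \phi(G)$ is a surjection between free groups of the same finite rank $k$, and by Hopficity of finitely generated free groups it is an isomorphism; in particular $\phi|_G$ is injective.

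It remains to identify $\fix\B$ as an equalizer of injective homomorphisms and invoke Theorem~\ref{inj fixed subgp of free gp inert}. Writing $\iota\colon G\hookrightarrow F$ for the inclusion, the family $\{\phi|_G:\phi\in\B\}\cup\{\iota\}$ consists of injective homomorphisms from the free group $G$ into $F$, and since $\fix\B\subseteq G$ one checks directly that
\[
\eq\bigl(\{\phi|_G:\phi\in\B\}\cup\{\iota\}\bigr)=\{g\in G:\phi(g)=g\ \forall\phi\in\B\}=\fix\B .
\]
Applying the Dicks--Ventura inertia theorem in its equalizer form (the natural generalization of Theorem~\ref{inj fixed subgp of free gp inert} to families of injective homomorphisms between free groups) shows that $\fix\B$ is inert in $G$; in particular $\rk(\fix\B)\leqslant \rk(G)=k\leqslant r=\rk(F)$, as desired.

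The main obstacle is precisely this reduction carried out for a whole family at once: one must produce a single finitely generated subgroup $G$ of rank $\leqslant r$ that contains $\fix\B$ and on which every member of $\B$ acts injectively, and the minimal-image-rank device over the monoid $M$ is what makes this uniform over the (possibly infinite) family. A secondary subtlety is that $G$ need not be invariant under the individual $\phi\in\B$ (the chain $\psi_0(F)\supseteq\psi_0^2(F)\supseteq\cdots$ of rank-$k$ subgroups may be strictly decreasing), which is why the conclusion is phrased through equalizers of injective homomorphisms $G\to F$ rather than through endomorphisms of $G$; this is exactly the generality in which the inertia theorem must be applied.
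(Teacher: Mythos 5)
Your reduction is sound as far as it goes, and it coincides with the paper's own mechanism (the proof of Theorem~\ref{main free}, which with $K=F$ yields exactly this statement): pass to the submonoid $M=\langle\B\rangle$, pick $\psi_0\in M$ with image $G=\psi_0(F)$ of minimal rank $k$, observe $\fix\B\leqslant G$, and deduce from minimality plus Hopficity that every $\phi\in\B$ is injective on $G$. The gap is the final step. You invoke ``the Dicks--Ventura inertia theorem in its equalizer form, the natural generalization of Theorem~\ref{inj fixed subgp of free gp inert} to families of injective homomorphisms between free groups.'' No such theorem is available to cite: Theorem~\ref{inj fixed subgp of free gp inert} concerns injective \emph{endomorphisms} of a single free group, and you yourself point out why it does not apply here --- $G$ need not be $\B$-invariant, so the maps $\phi|_G\colon G\to F$ have images falling outside $G$. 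The asserted generalization is far from routine: even finite generation of the equalizer of two monomorphisms between free groups is a well-known open question, and inertia of such equalizers is certainly not established. In fact the precise statement you need ($\fix\B$ inert in $\beta_0(F)$) is one of the \emph{new} results of this paper (Corollary~\ref{cor free}), so it cannot be quoted as known; its proof requires an additional ingredient that your argument is missing.

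That missing ingredient is Bergman's theorem on sections (Theorem~\ref{intersection of retracts of free gp}), and here is how the paper deploys it. Since $\B$ is a monoid, the subfamily $\beta_0\B=\{\beta_0\gamma \mid \gamma\in\B\}\subseteq\B$ \emph{does} restrict to injective endomorphisms of $G=\beta_0(F)$, so Theorem~\ref{inj fixed subgp of free gp inert} applies to it and gives $\rk(\fix(\beta_0\B))\leqslant k$. This alone does not finish the proof, because $\fix\B$ is merely a subgroup of $\fix(\beta_0\B)$, and subgroups of free groups can have strictly larger rank. Instead, set $E=\beta_0^{-1}(\fix(\beta_0\B))$ and consider the epimorphism of free groups $\beta_0|_E\colon E\twoheadrightarrow \fix(\beta_0\B)$, whose image is finitely generated; each $\gamma\in\B$ restricts to a section of it, since $\beta_0\gamma(x)=x$ for $x\in\fix(\beta_0\B)$, and the family of sections contains the inclusion (take $\gamma=Id$). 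The equalizer of this family is exactly $\fix\B$, and Theorem~\ref{intersection of retracts of free gp} makes it a \emph{free factor} of $\fix(\beta_0\B)$, whence $\rk(\fix\B)\leqslant\rk(\fix(\beta_0\B))\leqslant k\leqslant\rk(F)$. So you should replace your appeal to an unproven equalizer form of the inertia theorem by this sections-and-free-factor argument; with that substitution your proof becomes the paper's.
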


The following result from the same paper will also be used in our arguments below
(see~\cite[Corollary 12]{B}).

\begin{thm}[Bergman~\cite{B}]\label{intersection of retracts of free gp}
Let $\phi\colon G\twoheadrightarrow H$ be an epimorphism of free groups with $H$
finitely generated. Then, the equalizer of any family of sections of $\phi$ is a free
factor of $H$.
\end{thm}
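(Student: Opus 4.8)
The plan is to reduce the statement, in three stages, to a single topological lemma about a pullback of Stallings graphs. Throughout I fix a base section $\sigma_0$ of the family $\B$ and write $H_i=\sigma_i(H)$. Two elementary facts get things started: each $H_i$ is a retract of $G$ via the idempotent $\sigma_i\phi$, and a short computation shows $\sigma_0\bigl(\eq\B\bigr)=\bigcap_i H_i$, so that $\phi$ carries $\eq\B$ isomorphically onto $\phi\bigl(\bigcap_i H_i\bigr)$; in particular $\eq\{\sigma_0,\sigma_1\}\cong H_0\cap H_1$. Thus the theorem is really a statement about intersections of the retracts $H_i$, read back inside $H$ through $\phi$.

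Next I would peel off the family. For a finite family I argue by induction on its size. Granting the two–section case, $E_1:=\eq\{\sigma_0,\sigma_1\}$ is a free factor of $H$; let $r\colon H\twoheadrightarrow E_1$ be the retraction afforded by the splitting and set $\psi:=r\phi\colon G\twoheadrightarrow E_1$. The restrictions $\sigma_i|_{E_1}$ are sections of $\psi$, so by induction $\eq\{\sigma_0|_{E_1},\dots\}$ is a free factor of $E_1$, hence of $H$ by transitivity of free factors; and since $\eq\{\sigma_0,\dots,\sigma_k\}$ already lies inside $E_1$, it coincides with this subgroup. For an arbitrary (possibly infinite) family I choose a finite subfamily $\B_0$ whose equalizer has minimal rank among all finite subfamilies; using that a subgroup which is a free factor of $H$ and is contained in a free factor $A$ of $H$ is automatically a free factor of $A$ (Kurosh), together with rank rigidity of free factors, one gets $\eq(\B_0\cup\{\sigma\})=\eq\B_0$ for every $\sigma\in\B$, whence $\eq\B=\eq\B_0$ is a free factor. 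This stage also lets me assume $G$ is finitely generated, by replacing it with the (free, still surjecting) subgroup generated by the relevant images.

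The heart is the two–section case. I realize $H=\pi_1(R)$ on a rose $R$, model $\phi$ and $\sigma_0$ by graph maps so that $\sigma_0(H)$ is a subgraph $R_0\hookrightarrow\Gamma$ (with $\pi_1\Gamma=G$) that a map $p$ with $p_*=\phi$ retracts onto $R$, and realize $H_1=\sigma_1(H)$ by its Stallings core immersion $X\to\Gamma$. Because $\phi\sigma_i=\mathrm{id}_H$, the composite $p|_X\colon X\to R$ is a homotopy equivalence. Forming the core component of the fibre product $Q=R_0\times_{\Gamma}X$ gives $\pi_1(Q)=H_0\cap H_1$; the projection $Q\to X$ is an \emph{embedding} onto a subgraph (being the pullback of the embedding $R_0\hookrightarrow\Gamma$), while the projection to $R_0\cong R$ factors as this embedding followed by $p|_X$. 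The conclusion is then clean: the $\pi_1$ of a subgraph is always a free factor of the $\pi_1$ of the ambient graph (extend a spanning tree of the subgraph to one of $X$), so $\pi_1(Q)$ is a free factor of $\pi_1(X)$; transporting through the isomorphism $(p|_X)_*$ shows that its image $\eq\{\sigma_0,\sigma_1\}$ is a free factor of $H$.

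The main obstacle is precisely this last step, namely extracting a free factor rather than merely a subgroup of controlled rank. A general injective homomorphism $H\to G$ would only provide an immersion $X\to\Gamma$, and intersections of retracts are not free factors in general. What rescues us is the section hypothesis $\phi\sigma_i=\mathrm{id}_H$, which forces $p|_X$ to be a homotopy equivalence and thereby upgrades ``$\pi_1$ of a subgraph of $X$'' to ``free factor of $H$''. Care is also needed with connectedness and basepoints when passing to the core component of the fibre product, and with the Kurosh step when assembling infinite families, but these are routine once the two–section lemma is in place.
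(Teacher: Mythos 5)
First, a point of comparison: the paper does not prove this statement at all --- it quotes it verbatim as Bergman's result (\cite{B}, Corollary~12) --- so your proposal has to stand on its own, measured against Bergman's algebraic proof via supports of derivations and free factorizations. Your first two stages are correct and cleanly executed: the identity $\sigma_0(\eq\B)=\bigcap_i H_i$ (so that $\phi$ carries $\bigcap_i H_i$ isomorphically onto $\eq\B$), the induction peeling off one section at a time through the retraction of $H$ onto the free factor $E_1$, and the passage from finite to arbitrary families via a finite subfamily whose equalizer has minimal rank, using that a free factor of $H$ contained in a subgroup $A\leqslant H$ is a free factor of $A$, and that proper free factors have strictly smaller rank. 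All of this legitimately reduces the theorem to the two-section case.

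The two-section case, however, has a fatal gap at its very first step. You ask for a graph $\Gamma$ with $\pi_1\Gamma=G$ in which $\sigma_0(H)$ is realized by an \emph{embedded} subgraph $R_0$. As you yourself observe at the end, the fundamental group of a based connected subgraph is always a free factor of the ambient $\pi_1$ (extend a spanning tree), so such a pair $(\Gamma,R_0)$ exists if and only if $\sigma_0(H)$ is a free factor of $G$. But images of sections of $\phi$ are precisely the retracts of $G$, and retracts of free groups are in general \emph{not} free factors. Concretely: take $G=F(a,b)$, $H=\langle t\rangle\simeq\Z$, $\phi(a)=1$, $\phi(b)=t$, and $\sigma_0(t)=w:=a^2b^2ab^{-1}$. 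Then $\phi\sigma_0(t)=t^2t^{-1}=t$, so $\sigma_0$ is a section, and $\sigma_0(H)=\langle w\rangle$ is a retract of $G$ (the map $a\mapsto 1$, $b\mapsto w$ retracts $G$ onto it); yet $w$ is not primitive --- its Whitehead graph is the complete graph on $\{a^{\pm1},b^{\pm1}\}$, which has no cut vertex --- so $\langle w\rangle$ is not a free factor of $G$, and your model cannot even be set up, for any choice of second section $\sigma_1$. Retreating to the honest Stallings picture, $R_0\to\Gamma$ is merely an immersion, the pullback projection $Q\to X$ is then merely an immersion rather than a subgraph inclusion, and the $\pi_1$-image of an immersion need not be a free factor --- which is exactly the nontrivial content of Bergman's theorem. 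Your closing paragraph credits the upgrade from ``subgroup'' to ``free factor'' to the fact that $\phi\sigma_i=\mathrm{id}_H$ makes $p|_X$ a homotopy equivalence, but that homotopy equivalence is beside the missing point: nothing in the section hypothesis makes $H_0$ (or $H_1$) a free factor of $G$, and without that the pullback argument collapses. The heart of the theorem therefore remains unproven, and it is precisely here that Bergman's machinery does its real work.
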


Here, a \emph{section} of $\phi\colon G\to H$ is a homomorphism going in the opposite
direction, $\sigma\colon H\to G$, and such that $\phi\sigma =Id\colon H\to H$.

Finally, the second evidence towards Conjecture~\ref{conj} was the following result,
proved some years later by Martino--Ventura in~\cite{MV}:

\begin{thm}[Martino--Ventura, \cite{MV}]\label{compression of fixed subgp in free gp}
Let $F$ be a finitely generated free group, and let $\B\subseteq \edo(F)$. Then, the
fixed subgroup $\fix \B$ is compressed in $F$.
\end{thm}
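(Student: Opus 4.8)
The plan is to realize $\fix\B$, up to rank, as a fixed subgroup (or an equalizer of sections) living inside the smaller free group $K$ itself, and then invoke Bergman's theorems, which only produce bounds relative to the ambient group in which one works. The starting observation is that, since $\fix\B\leqslant K$, we have $\fix\B=K\cap\fix\B=\eq\A$, where $\A=\{\iota\}\cup\{\phi\iota:\phi\in\B\}$ and $\iota\colon K\hookrightarrow F$ is the inclusion: indeed $k\in\eq\A$ means precisely $\phi(k)=k$ for all $\phi\in\B$, i.e.\ $k\in K\cap\fix\B$. Thus the distinguished member $\iota$ of the family forces the equalizer to be computed inside $K$, and the whole problem becomes one of bounding the rank of an equalizer of homomorphisms out of $K$ by $\rk(K)$.

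First I would make $K$ a free factor of a group carrying a retraction onto it. By M.~Hall's classical theorem, $K$ is a free factor of some finite-index subgroup $F'\leqslant F$, say $F'=K*L$, with associated retraction $r\colon F'\twoheadrightarrow K$. After passing to a suitable finite-index subgroup of $F$ (and reducing to a finite subfamily, which is legitimate by a descending-chain argument on fixed subgroups of free groups, cf.\ the remark following the definition of inertness) one arranges that each $\phi$ carries $K$ into $F'$, so that all the maps $\phi\iota$ land in $F'$ and $r$ is available on their images. This is exactly where the passage from the ambient $F$ down to the smaller $K$ is engineered: the retraction $r$ is the mechanism that pushes the $F$-level data into $K$.

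The core step is then to convert the family $\A$ of homomorphisms $K\to F'$ into data to which Bergman's results apply. One route is to manufacture a family of endomorphisms of the free group $K$ whose common fixed subgroup is exactly $\fix\B$, and then apply Bergman's theorem (the bound $\rk(\fix\,\cdot\,)\leqslant\rk(K)$) verbatim inside $K$; the naive candidate $r\circ(\phi\iota)\colon K\to K$ fixes everything in $K\cap\fix\phi$ but in general fixes strictly more, so additional maps that detect the $\ker r$-component are needed. A cleaner route is to build an epimorphism of free groups onto $K$ for which the members of $\A$ become genuine sections, and then invoke Theorem~\ref{intersection of retracts of free gp} to conclude that the resulting equalizer is a free factor of $K$, whence $\rk(\fix\B)\leqslant\rk(K)$ at once.

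The main obstacle, in either route, is \emph{exactness}: the easy constructions either enlarge the fixed set or fail to be honest sections, so the real work is to orchestrate the retraction $r$ — possibly together with auxiliary stable letters recording the $L$-coordinate — so that the constructed fixed subgroup, or section-equalizer, has the rank of $\fix\B$ and no more, while simultaneously keeping all the relevant images inside $F'$. Controlling these two requirements at the same time, uniformly across the whole family $\B$, is the crux of the argument; once it is achieved, the inequality $\rk(\fix\B)\leqslant\rk(K)$ falls out of Bergman's theorems, establishing that $\fix\B$ is compressed in $F$.
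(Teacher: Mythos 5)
Your plan has a genuine gap, in two places. First, the Hall-theorem step is unjustified: if $K$ is a free factor of a finite-index subgroup $F'\leqslant F$, there is no way to ``arrange that each $\phi$ carries $K$ into $F'$'' --- an endomorphism $\phi\in\edo(F)$ is defined on all of $F$, its image $\phi(K)$ can sit anywhere in $F$, and passing to a further finite-index subgroup changes neither $\phi$ nor $\phi(K)$ (endomorphisms do not preserve finite-index subgroups). Second, and more seriously, the step you yourself label ``the crux'' --- manufacturing an epimorphism of free groups for which the maps $\phi\iota$ become genuine sections with equalizer exactly $\fix\B$ --- is left unconstructed, and the version you sketch (an epimorphism onto $K$ itself) is impossible in general: for $\phi\iota\colon K\to G$ to be a section of some $q\colon G\twoheadrightarrow K$ one needs $q\phi\iota=Id_K$, which forces $\phi$ to be injective on $K$; arbitrary endomorphisms need not be (they may kill elements of $K$), and no choice of retraction $r$ or auxiliary stable letters can repair this. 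What remains of the proposal is essentially a restatement of the problem together with the correct guess that Bergman's Theorem~\ref{intersection of retracts of free gp} should be the engine.

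The missing idea, which is how the paper proceeds (Theorem~\ref{main free}), is to restore injectivity \emph{before} building sections: replace $\B$ by the submonoid $\langle \B\rangle$ and choose $\beta_0\in\langle \B\rangle$ whose image $\beta_0(F)$ has minimal rank. Minimality forces every $\gamma\in\B$ to act injectively on $\beta_0(F)$, so Theorem~\ref{inj fixed subgp of free gp inert} (Dicks--Ventura, which your plan never invokes) gives inertia of $\fix(\beta_0\B)$ in $\beta_0(F)$. The epimorphism to which Bergman is applied is then not a retraction of a Hall overgroup of $K$ but $\beta_0|_E\colon E\twoheadrightarrow \beta_0(K)\cap\fix(\beta_0\B)$, where $E=\beta_0^{-1}(\beta_0(K)\cap\fix(\beta_0\B))$; each $\gamma\in\B$ restricts to an honest section of it, since $\beta_0\gamma$ fixes $\beta_0(K)\cap\fix(\beta_0\B)$ pointwise, and Bergman identifies the equalizer --- which equals $K\cap\fix\B=\fix\B$ when $\fix\B\leqslant K$ --- as a free factor of $\beta_0(K)\cap\fix(\beta_0\B)$. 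Compression then follows from the chain $\rk(\fix\B)\leqslant \rk(\beta_0(K)\cap\fix(\beta_0\B))\leqslant \rk(\beta_0(K))\leqslant \rk(K)$, whose middle inequality is exactly the Dicks--Ventura inertia; note that Bergman's theorems alone, on which your whole plan rests, cannot close the argument. (Also, no reduction to a finite subfamily is needed in this approach.)
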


For more details about fixed subgroups in free groups, we refer the interested reader to
the survey~\cite{V} (which covers the history of this line of research up to 2002).

\bigskip

Let us briefly recall now what is known about fixed subgroups in surface groups. A
\emph{surface} group is the fundamental group, $G=\pi_1(X)$, of a connected closed
(possibly non-orientable) surface $X$. To fix the notation, we shall denote $\Sigma_g$
the closed orientable surface of genus $g\geqslant 0$, and
 $$
S_g= \pi_1(\Sigma_g )=\langle a_1,b_1,\ldots,a_g,b_g \mid [a_1, b_1]\cdots [a_g, b_g]
\rangle
 $$
its fundamental group (by convention, $S_0=\langle \, \mid \, \rangle$ stands for the
trivial group, the fundamental group of the sphere $\Sigma_0$); here, we use the
notation $[x,y]=xyx^{-1}y^{-1}$. And for the non-orientable case, we shall denote
$N\Sigma_k$ the connected sum of $k\geqslant 1$ projective planes, and
 $$
NS_k =\pi_1(N\Sigma_k)=\langle a_1,a_2,\ldots,a_k \mid a_1^2 \cdots a_k^2\rangle
 $$
its fundamental group. Note that, among surface groups, the only abelian ones are
$S_0=1$ (for the sphere), $S_1=\mathbb{Z}^2$ (for the torus), and
$NS_1=\mathbb{Z}/2\mathbb{Z}$ (for the projective plane).

It is well known that the Euler characteristic of orientable surfaces is
$\chi(\Sigma_g)=2-2g$, and of the non-orientable ones is $\chi(N\Sigma_k)=2-k$. Hence,
all surfaces have negative Euler characteristic except for the sphere $\Sigma_0$, the
torus $\Sigma_1$, the projective plain $N\Sigma_1$, and the Klein bottle $N\Sigma_2$
(homeomorphic to the connected sum of two projective plains). As can be seen below, many
results about automorphisms and endomorphisms will work in general for surfaces with
negative Euler characteristic; $S_0$, $S_1$, $NS_1$, and $NS_2$ will usually present
special and exceptional behaviour. We shall refer to the Euler characteristic also from
the groups, namely $\chi (S_g )=\chi (\Sigma_g )=2-2g$, and $\chi (NS_k )=\chi(
N\Sigma_k )=2-k$.

It is also well known that the standard sets of generators for surface groups given
above are minimal i.e., $\rk(S_g)=2g$ and $\rk(NS_k)=k$; this can be easily seen, for
example, by looking at their corresponding abelianizations. Furthermore, the well known
Freiheitssatz proved by Magnus, see~\cite[Theorem~5.1]{LS}, states that any proper
subset of this set of generators form a free basis is the subgroup they generate.

First results about fixed subgroups of surface groups are due to Jiang--Wang--Zhang, who
showed in~\cite{JWZ} that $\rk(\fix\phi)\leqslant \rk(G)$, for every endomorphism
$\phi\in \edo(G)$ of a surface group $G$ with $\chi(G)<0$. In the recent
paper~\cite{WZ}, Wu--Zhang extended it to the following results:

\begin{thm}[Wu--Zhang, \cite{WZ}]\label{WZ thm}
Let $G$ be a surface group with $\chi(G)<0$, and let $\B\subseteq \edo(G)$. Then,
\begin{itemize}
\item[(i)]$\rk (\fix\B)\leqslant \rk (G)$, with equality if and only if $\B=\{id\}$;
\item[(ii)] $\rk (\fix\B)\leqslant \frac{1}{2}\rk(G)$, if $\B$ contains a
    non-epimorphic endomorphism;
\item[(iii)] if $\B\subseteq \aut(G)$, then $\fix \B$ is inert in $G$.
\end{itemize}
\end{thm}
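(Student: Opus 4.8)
The plan is to reduce everything to the free-group results already available (Bergman's rank bound and the Dicks--Ventura inertia theorem) by exploiting the special subgroup structure of a surface group $G$ with $\chi(G)<0$. First I would establish a dichotomy for endomorphisms: since $G$ is Hopfian, a surjective endomorphism is an automorphism; and since $\rk$ of a surface group equals $2-\chi$, a proper finite-index subgroup has strictly larger rank, so an endomorphism with finite-index image must be surjective. Hence every $\phi\in\edo(G)$ is either an automorphism or has image of infinite index, and a finitely generated infinite-index subgroup of $G$ is free (its cover is a non-compact surface, homotopy equivalent to a graph). Thus a non-epimorphic $\phi$ has free image $F=\im\phi$, and the exceptional behaviour is confined to the excluded cases $S_0,S_1,NS_1,NS_2$.

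For part (ii) I would first treat a single non-epimorphic $\phi$. The key auxiliary lemma is that any surjection from $G$ onto a free group $F_r$ forces $r\leqslant\tfrac12\rk(G)$; I would prove this by a cup-product argument, observing that $\phi^*$ embeds $H^1(F_r)$ as a subspace of $H^1(G)$ on which the cup-product pairing vanishes, and that isotropic subspaces for the intersection form (symplectic in the orientable case, nondegenerate symmetric over $\Z/2$ in the non-orientable case) have dimension at most $\tfrac12\rk(G)$. Applied to $\phi\colon G\twoheadrightarrow F$ this gives $\rk(F)\leqslant\tfrac12\rk(G)$. Since $\fix\phi\subseteq\im\phi=F$ and $\phi(F)\subseteq F$, we have $\fix\phi=\fix(\phi|_F)$ for the free-group endomorphism $\phi|_F\in\edo(F)$, so Bergman's theorem yields $\rk(\fix\phi)\leqslant\rk(F)\leqslant\tfrac12\rk(G)$. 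For a family $\B$ containing such a $\phi$ one has $\fix\B\leqslant\fix\phi\leqslant F$, so $\fix\B$ is free; to bound its rank I would realise $\fix\B$ as the equalizer inside $F$ of the inclusion $F\hookrightarrow G$ together with the restrictions $\psi|_F\ (\psi\in\B)$, and invoke the free-group equalizer/fixed-subgroup technology to conclude $\rk(\fix\B)\leqslant\rk(F)$.

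Part (i) then follows by combining the two regimes. If $\B$ contains a non-epimorphism, part (ii) gives $\rk(\fix\B)\leqslant\tfrac12\rk(G)<\rk(G)$. If $\B\subseteq\aut(G)$, the inertia statement (iii) applied with $K=G$ gives $\rk(\fix\B)=\rk(G\cap\fix\B)\leqslant\rk(G)$. For equality, note $\fix\B=G$ precisely when every $\phi\in\B$ is the identity, i.e. $\B=\{id\}$; the remaining point is that a proper fixed subgroup cannot attain rank $\rk(G)$, which I would deduce from the dichotomy (a proper finite-index subgroup has strictly larger rank, while a proper infinite-index fixed subgroup is free of strictly smaller rank by the single-map analysis together with an Euler-characteristic count).

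For part (iii) I would first reduce an arbitrary family to a finite one, using that fixed subgroups satisfy a descending chain condition so that $\fix\B=\fix\B'$ for some finite $\B'\subseteq\B$; since inert subgroups of $G$ are closed under finite intersection, it suffices to show each $\fix\phi$ $(\phi\in\aut(G))$ is inert. For $\phi=id$ this is trivial, and for $\phi\neq id$ part (i) forces $\fix\phi$ to have infinite index, hence to be free, so the task becomes transferring the Dicks--Ventura inertia of free-group fixed subgroups to the surface ambient, i.e. proving $\rk(K\cap\fix\phi)\leqslant\rk(K)$ for every finitely generated $K\leqslant G$. I expect this transfer to be the main obstacle: for finite-index $K$ one can work inside a free subgroup carrying $K\cap\fix\phi$ and use that $\phi$ restricts suitably, but controlling $K\cap\fix\phi$ uniformly over all $K$ — pushing the free-group inertia results through the non-free ambient $G$ without injectivity of $\phi$ — is precisely the delicate point, mirroring why the endomorphism inertia Conjecture~\ref{conj} remains open. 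The other technical pressure points are the family reduction in (ii) and checking the free-quotient bound in the non-orientable cases.
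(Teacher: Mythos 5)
First, note that the paper you are being compared against does not prove Theorem~\ref{WZ thm} at all: it is imported verbatim from Wu--Zhang~\cite{WZ}, whose proof in turn rests on the geometric analysis of fixed subgroups of surface automorphisms by Jiang--Wang--Zhang~\cite{JWZ} (Nielsen fixed point theory and the Nielsen--Thurston classification). Measured against that, your proposal has one correct and genuinely nice component, but a fatal gap exactly where the real content of the theorem lies. What works: the dichotomy (Hopfian, so non-epimorphic image has infinite index and is free), and the single-endomorphism case of (ii) via the isotropic-subspace bound on the inner rank ($\phi^*$ embeds $H^1(F_r)$ isotropically for the cup pairing, so $r\leqslant\frac12\rk(G)$) followed by Bergman applied to $\phi|_F\in\edo(F)$ with $F=\im\phi$; this is sound and is essentially how Lemma~\ref{subgp of surface gp}(ii) is justified.

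The gap is the automorphism case, and your architecture makes it circular rather than merely incomplete: you deduce the inequality in (i) for automorphism families from the inertia statement (iii), while your reduction in (iii) (``$\phi\neq id$ forces $\fix\phi$ to have infinite index, hence free'') invokes (i); and you then explicitly leave the inertia transfer unproven, which is precisely the theorem. Your only quantitative tool --- restrict to the free image and use Bergman/Dicks--Ventura --- is vacuous for automorphisms, since $\im\phi=G$ is not free; no free-group technology alone yields $\rk(\fix\phi)\leqslant\rk(G)$ for a nontrivial $\phi\in\aut(G)$, let alone inertia. (Wu--Zhang get both from the geometric structure of $\fix\phi$ for surface automorphisms, not by pushing free-group inertia through $G$.) Two secondary gaps: in the family case of (ii), your ``equalizer of $F\hookrightarrow G$ and the $\psi|_F$'' is not covered by the free-group equalizer results, because the maps $\psi|_F\colon F\to G$ land in the surface group, not in a free group (and equalizers of non-injective maps between free groups can even be infinitely generated); the working repair, as in Theorem~\ref{main surface} and Wu--Zhang's Proposition~\ref{equalizer of endomorphisms of surface group}, is to pass to the submonoid $\langle\B\rangle$, pick $\beta_0$ with image of minimal rank, and realize the restrictions as \emph{sections} of $\beta_0$ restricted to $E=\beta_0^{-1}(\cdots)$, so that Bergman's section theorem (Theorem~\ref{intersection of retracts of free gp}) or its surface analogue applies. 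And in the equality case of (i), your ``Euler-characteristic count'' fails: an infinite-index free subgroup of $G$ is not constrained by $\chi$ and can have rank equal to or exceeding $\rk(G)$, so strictness for $\phi\neq id$ again requires the automorphism analysis you do not have.
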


For equalizers of sections of homomorphisms from surface groups to free groups,
Wu--Zhang gave the following result, see~\cite[Proposition 4.7]{WZ}:

\begin{prop}[Wu--Zhang, \cite{WZ}]\label{equalizer of endomorphisms of surface group}
Let $G$ be a surface group with $\chi(G)<0$, and $F$ a finitely generated free group. If
$\phi: G\twoheadrightarrow F$ is an epimorphism, and $\B$ is a family of sections of
$\phi$, then
 $$
\rk(\eq\B )\leqslant \rk(F)\leqslant\frac{1}{2}\rk(G).
 $$
\end{prop}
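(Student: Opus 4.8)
The plan is to reduce the equalizer to an intersection of retracts, and then bound the two quantities separately.

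\emph{Reformulation.} Fix one section $\sigma_1\in\B$. For every $\sigma\in\B$ the endomorphism $\sigma\phi\in\edo(G)$ is idempotent, since $(\sigma\phi)^2=\sigma(\phi\sigma)\phi=\sigma\phi$; hence it is a retraction of $G$ onto $\sigma(F)$ and $\fix(\sigma\phi)=\sigma(F)$. I would first record that $\sigma_1$ maps $\eq\B$ isomorphically onto $\bigcap_{\sigma\in\B}\sigma(F)$: if $g$ lies in every $\sigma(F)$, writing $g=\sigma(f_\sigma)$ and applying $\phi$ forces every $f_\sigma$ to equal $\phi(g)$, so $g=\sigma_1(\phi(g))$ with $\phi(g)\in\eq\B$. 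Thus $\rk(\eq\B)=\rk\big(\bigcap_{\sigma\in\B}\sigma(F)\big)=\rk\big(\fix\{\sigma\phi:\sigma\in\B\}\big)$, an intersection of free retracts of $G$, each isomorphic to $F$. Since each $\sigma\phi$ has image $\sigma(F)\cong F\not\cong G$ and is therefore non-epimorphic, Theorem~\ref{WZ thm}(ii) already yields the weaker bound $\rk(\eq\B)\le\frac12\rk(G)$; the remaining work is to sharpen this to $\rk(\eq\B)\le\rk(F)$ and to establish $\rk(F)\le\frac12\rk(G)$.

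\emph{The inequality $\rk(F)\le\frac12\rk(G)$.} I would use cohomology with $\Z/2\Z$-coefficients, which treats the orientable and non-orientable cases at once. As $\phi\sigma_1=id$, the map $\phi^*\colon H^1(F;\Z/2\Z)\to H^1(G;\Z/2\Z)$ is split injective (with retraction $\sigma_1^*$), so its image $W$ has dimension $\rk(F)$. Since $F$ is free, $H^2(F;\Z/2\Z)=0$, and naturality of the cup product gives $\phi^*(x)\cup\phi^*(y)=\phi^*(x\cup y)=0$; hence $W$ is totally isotropic for the pairing $H^1(G;\Z/2\Z)\times H^1(G;\Z/2\Z)\to H^2(G;\Z/2\Z)\cong\Z/2\Z$. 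This pairing is nondegenerate by Poincar\'e duality for the closed surface, so $W\subseteq W^\perp$ forces $\dim W\le\frac12\dim H^1(G;\Z/2\Z)$. As $\dim_{\Z/2\Z}H^1(G;\Z/2\Z)=\rk(G)$ (namely $2g$ for $S_g$ and $k$ for $NS_k$), this gives $\rk(F)\le\frac12\rk(G)$.

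\emph{The inequality $\rk(\eq\B)\le\rk(F)$, and the main obstacle.} By the reformulation it suffices to bound $\rk\big(\bigcap_{\sigma}\sigma(F)\big)$. My strategy is to show that retracts of a surface group are inert in $G$: granting this, and using that inert subgroups are closed under intersection — the reduction to a finite subfamily being legitimate since the nested equalizers $\eq\{\sigma_1,\dots,\sigma_n\}$ form a descending chain in the free group $F$ of rank $\le\rk(G)$ (by Theorem~\ref{WZ thm}(i)), which stabilizes — the subgroup $H:=\bigcap_{\sigma}\sigma(F)$ is inert, whence $\rk(H)=\rk(H\cap\sigma_1(F))\le\rk(\sigma_1(F))=\rk(F)$. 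The obstacle is precisely the inertness of the retracts $\sigma(F)$. A natural attempt is to pass to $L:=\langle\sigma(F):\sigma\in\B\rangle\le G$; then $\sigma_1\phi$ restricts to a retraction $L\twoheadrightarrow F_1:=\sigma_1(F)$ with the $\sigma\phi|_{F_1}$ as sections, whose equalizer is again $H$. If $L$ has infinite index in $G$ it is free, and Bergman's Theorem~\ref{intersection of retracts of free gp} applies verbatim to give that $H$ is a free factor of $F_1$, so $\rk(H)\le\rk(F)$. The genuinely hard case — which I expect to be the crux — is when the section images generate a finite-index, hence non-free, subgroup (reducing to $L=G$): here Bergman is unavailable, and controlling the rank of the intersection seems to require a surface-specific argument, through the geometry of the covering surface associated with $F_1$ rather than a purely combinatorial one.
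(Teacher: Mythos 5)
Your two preliminary steps are sound: the reformulation via the idempotents $\sigma\phi$ (with $\fix(\sigma\phi)=\sigma(F)$ and $\sigma_1$ carrying $\eq\B$ isomorphically onto $\bigcap_{\sigma\in\B}\sigma(F)$) is correct, and your $\Z/2\Z$-cohomology argument for $\rk(F)\leqslant\frac{1}{2}\rk(G)$ is a valid proof of the inner-rank bound that the paper itself takes from Lyndon--Schupp (see the proof of Lemma~\ref{subgp of surface gp}(ii)). Your handling of the case where $L=\langle\sigma(F)\colon\sigma\in\B\rangle$ has infinite index (hence is free, so Bergman's Theorem~\ref{intersection of retracts of free gp} applies to $\sigma_1\phi|_L$ and its sections $\sigma\phi|_{\sigma_1(F)}$) is also fine. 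Note, for calibration, that the paper contains no proof of this proposition to compare against: it is imported verbatim from Wu--Zhang~\cite{WZ}, and within the present paper it is used as an ingredient in the proof of Theorem~\ref{main surface}.

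Nevertheless the proposal has a genuine gap, and it sits exactly where you say it does: the inequality $\rk(\eq\B)\leqslant\rk(F)$ is unproved when $L$ has finite index in $G$, and this case really occurs (two sections whose images jointly generate $G$, for instance). Your fallback strategy rests on two unavailable ingredients. First, inertness of the retracts $\sigma(F)$ in a surface group is not a known fact one may ``grant'': retracts are fixed subgroups of non-injective idempotent endomorphisms, so Theorem~\ref{inj fixed subgp of free gp inert} does not apply, and the claim is an open instance of the inertia Conjectures~\ref{conj} and~\ref{conj surfaces} --- precisely the kind of statement this proposition is meant to help attack, not a tool for proving it. Second, your reduction to finite subfamilies is incorrect as argued: a descending chain of finitely generated subgroups of bounded rank in a free group need not stabilize (e.g.\ $\langle a,b\rangle>\langle a^2,b\rangle>\langle a^4,b\rangle>\cdots$ in $F_2$), and the closure of inert subgroups under arbitrary intersections is proved by a different argument and stated in the paper only for a free ambient group (\cite[Corollary~I.4.13]{DV}). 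Finally, be aware that the tempting shortcut through this paper's own machinery --- apply Corollary~\ref{cor surface} to the family $\{\sigma\phi\colon\sigma\in\B\}$, whose generated monoid is just $\{\sigma\phi\}\cup\{Id\}$ with minimal image $\sigma_1(F)$ of rank $\rk(F)$, giving $\fix\{\sigma\phi\}$ inert in $\sigma_1(F)$ --- is circular here, since the proof of Theorem~\ref{main surface} invokes this very proposition in the case where the relevant preimage subgroup is a surface group. So what you have is a correct proof of the second inequality and of the first one in the free-subgroup case; the finite-index case, which is the actual content of the Wu--Zhang result, remains open in your write-up.
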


In view of these results it seems reasonable to state the inertia Conjecture~\ref{conj} for
surface groups as well.

\begin{conj}\label{conj surfaces}
For an arbitrary family of endomorphisms $\B\subseteq \edo(F)$ of a surface group $G$,
$\fix\B$ is inert in $G$.
\end{conj}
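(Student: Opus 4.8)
The plan is to peel the problem down, by the standard closure properties of inert subgroups, to a single endomorphism, and then to exploit the interaction between the image-filtration of that endomorphism and the known free-group results. Since $\fix\B=\bigcap_{\phi\in\B}\fix\phi$ and the inert subgroups of $G$ are closed under finite intersection, it suffices to prove that $\fix\phi$ is inert for a single $\phi\in\edo(G)$. Finite families then follow immediately, and an arbitrary family $\B$ follows by adapting the arbitrary-intersection closure argument of Dicks--Ventura to the surface setting, the Howson property of surface groups providing the needed finiteness. The four exceptional surface groups with $\chi(G)\geqslant 0$ (the sphere, torus, projective plane and Klein bottle) are virtually abelian and can be checked directly, so I will assume $\chi(G)<0$.

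Next I would dispose of the surjective case. As closed surface groups are Hopfian, an epimorphic $\phi$ is automatically an automorphism, whence $\fix\phi$ is inert by Theorem~\ref{WZ thm}(iii). So assume $\phi$ is non-epimorphic. Comparing ranks via $\rk(G)=2-\chi(G)$ and the multiplicativity of $\chi$ under finite covers, the proper image $\phi(G)$ cannot have finite index (a finite-index subgroup would have rank exceeding $\rk(G)$, impossible for an epimorphic image of $G$); hence $\phi(G)$ has infinite index and is therefore a finitely generated free subgroup of $G$.

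The crux of the favorable direction is to pass to the stabilized image. Writing $F_n:=\phi^n(G)$, each $F_n$ is free, and $\rk F_{n+1}=\rk\phi(F_n)\leqslant\rk F_n$, so the ranks stabilize at some $n_0$. For $n\geqslant n_0$ the restriction $\phi\colon F_n\twoheadrightarrow F_{n+1}$ is a surjection between free groups of equal finite rank, hence an isomorphism by Hopficity; in particular $\phi$ restricts to an \emph{injective} endomorphism $\phi|_{F^{*}}$ of the free group $F^{*}:=F_{n_0}$. Since $\fix\phi\subseteq\bigcap_n F_n\subseteq F^{*}$, we have $\fix(\phi|_{F^{*}})=\fix\phi$, so Theorem~\ref{inj fixed subgp of free gp inert} yields, \emph{unconditionally} (that is, without appeal to the still-open Conjecture~\ref{conj}), that $\fix\phi$ is inert in $F^{*}$. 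Consequently, for any finitely generated $K\leqslant G$, using Howson to see $K\cap F^{*}$ is finitely generated,
 $$
\rk(K\cap\fix\phi)=\rk\bigl((K\cap F^{*})\cap\fix\phi\bigr)\leqslant\rk(K\cap F^{*}),
 $$
and the whole statement would follow once we know that $F^{*}=\phi^{n_0}(G)$ is inert in $G$.

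The main obstacle is exactly this last point: the inertia in $G$ of the stabilized image $F^{*}$, a specific finitely generated free subgroup of the surface group. This is not automatic, since far from every finitely generated subgroup of a surface group is inert, so one must use the special nature of $F^{*}$ as the image of a power of $\phi$ on which $\phi$ acts as an isomorphism. When $\phi^{n_0}$ is, up to the isomorphism above, a retraction, $F^{*}$ is a retract and hence inert in any ambient group, which settles those cases; but in general $F^{*}$ need be neither a retract nor a free factor, and I expect the genuine difficulty to lie in controlling $K\cap\phi^{n_0}(G)$ for arbitrary $K$. I would attack this geometrically---realizing $\phi$ by a self-map of the surface and analyzing the relevant covers via Stallings-type folding or Nielsen fixed-point arguments---but it is precisely here that the reasoning stops being unconditional, which is why the statement remains a conjecture rather than a theorem.
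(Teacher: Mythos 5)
The statement you are trying to prove is Conjecture~\ref{conj surfaces}, which the paper itself leaves open: there is no proof in the paper to match, and your attempt --- as you candidly admit in the last paragraph --- does not close it either. What you do establish is, in substance, the paper's own partial result. Your stabilized image $F^{*}=\phi^{n_0}(G)$ plays exactly the role of the minimal-rank image $\beta_0(G)$ in Theorem~\ref{main surface} (take $\beta_0=\phi^{n_0}\in\langle\B\rangle$); your observation that $\phi$ restricts to an injective endomorphism of the free group $F^{*}$ with $\fix(\phi|_{F^{*}})=\fix\phi$, followed by Theorem~\ref{inj fixed subgp of free gp inert}, is precisely how the paper obtains inertia of $\fix\B$ in $\beta_0(G)$; and your closing reduction --- ``the whole statement would follow once $F^{*}$ is inert in $G$'' --- is verbatim the second half of Corollary~\ref{cor surface}. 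The genuine gap is the one you name: inertia in $G$ of the stabilized image, which the paper also cannot establish except in special cases (Corollary~\ref{cor surface2}, where $\rk(\beta_0(NS_3))\leqslant 1$, and the remark conditional on a Hanna Neumann statement for surface groups, which handles $\rk(\beta_0(G))\leqslant 2$). So: correct partial reasoning, same route as the paper's partial results, but not a proof.

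Two specific steps in your write-up are flawed even as stated. First, ``when $\phi^{n_0}$ is \dots a retraction, $F^{*}$ is a retract and hence inert in any ambient group'' is false as a general principle: the paper's final corollary exhibits $\langle a,b\rangle$, a retract of $F_2\times\Z$, whose intersection with $\langle ca,b\rangle$ is infinitely generated, so retracts need not be inert in an arbitrary ambient group; and even inside free groups, inertia of retracts is an instance of the open Conjecture~\ref{conj} (for an idempotent $\phi$ one has $\fix\phi=\phi(F)$, the retract itself), so it cannot be invoked as known. Second, your reduction from an arbitrary family $\B$ to a single endomorphism requires that inert subgroups of a surface group be closed under \emph{arbitrary} intersections; the paper asserts the descending-chain argument only for a free ambient group (\cite[Corollary~I.4.13]{DV}), and the Howson property alone does not obviously supply it for surface groups --- a descending chain of subgroups of bounded rank need not stabilize. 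The paper sidesteps this entirely by treating the whole submonoid $\langle\B\rangle$ at once and choosing $\beta_0$ of minimal image rank, which simultaneously guarantees that \emph{every} member of $\B$ acts injectively on $\beta_0(G)$, as is needed to apply Theorem~\ref{inj fixed subgp of free gp inert} to the family $\beta_0\B|_{\beta_0(G)}$ rather than merely to powers of one map; if you want the family case, you should argue as the paper does rather than via intersections.
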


The structure of the paper is as follows. We dedicate Section~\ref{free} to finitely
generated free groups: our main result in this context is Theorem~\ref{main free}, which
provides an alternative proof for Theorem~\ref{compression of fixed subgp in free gp},
and also gives a partial positive solution to Conjecture~\ref{conj} (see
Corollaries~\ref{cor free} and~\ref{cor free 2}).

We dedicate Section~\ref{surface} to surface groups: the advantage of our proof of
Theorem~\ref{main free} is that it easily translates into surface groups, see
Theorem~\ref{main surface} our main result in this context. As corollaries, we get some
new results giving supporting evidence to Conjecture~\ref{conj surfaces}: we prove
compression for fixed subgroups of arbitrary families of endomorphisms of a surface
group (see Corollary~\ref{compressed surface}), and we get partial positive solutions to
Conjecture~\ref{conj surfaces} (see Corollaries~\ref{cor surface} and~\ref{cor
surface2}).

Finally, in Section~\ref{products} we investigate the same issues (Bestvina--Handel
bound, compression, and inertia) both for automorphisms and endomorphism of groups of
the form $G=G_1\times \cdots \times G_n$, where $n\geqslant 0$ and each $G_i$ is either
a finitely generated free group or a surface group, $G_i =F_r,\, S_g,\, NS_k$ for some
$r\geqslant 1$, $g\geqslant 1$ or $k\geqslant 1$, respectively. In this context, we give
a characterization of those groups $G$ within this family for which $\rk (\fix \phi
)\leqslant \rk(G)$ for every $\phi \in \aut(G)$ (see Theorem~\ref{main products 1}), a
necessary condition for those satisfying that $\fix (\phi)$ is compressed in $G$ for
every $\phi \in \aut(G)$ (see Theorem~\ref{main products 2}), and a necessary condition
for those satisfying that $\fix (\phi)$ is inert in $G$ for every $\phi \in \aut(G)$
(see Theorem~\ref{main products 3}).

\section{Fixed points in free groups}\label{free}

In this context, we prove the following result for finitely generated free groups. The
proof (as well as the proof of later Theorem~\ref{main surface}) is an adaptation and
generalization of the proof of~\cite[Theorem~1.3]{WZ} to our situation. We hope this
contribution sheds light for the resolution of the full Conjecture~\ref{conj} in the
future.

\begin{thm}\label{main free}
Let $F$ be a finitely generated free group, let $\B\subseteq \edo(F)$ be an arbitrary
family of endomorphisms, let $\langle \B\rangle \leqslant \edo(F)$ be the submonoid
generated by $\B$, and let $\beta_0 \in \langle \B\rangle$ with image $\beta_0(F)$ of
minimal rank. Then, for every subgroup $K\leqslant F$ such that $\beta_0 (K)\cap \fix\B
\leqslant K$, we have $\rk (K\cap \fix\B)\leqslant \rk(K)$.
\end{thm}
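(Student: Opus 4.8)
The plan is to push the whole problem onto the image $L:=\beta_0(F)$, on which $\beta_0$ becomes injective, and then to recognize $\fix\B$ as the fixed subgroup of a family of \emph{injective} endomorphisms of $L$, so that the Dicks--Ventura inertia Theorem~\ref{inj fixed subgp of free gp inert} can be applied. First I would record two structural facts about $\beta_0$. Since $\beta_0$ is a composite of maps in $\B$, each fixing $\fix\B$ pointwise, $\beta_0$ fixes $\fix\B$ pointwise; hence every $g\in\fix\B$ satisfies $g=\beta_0(g)\in\beta_0(F)=L$, i.e. $\fix\B\leqslant L$. Next, minimality of $\rk(\beta_0(F))$ forces $\rk(\psi(F))\geqslant\rk(L)$ for all $\psi\in\langle\B\rangle$, while composition never raises image rank; applying this to $\beta_0^2\in\langle\B\rangle$ gives $\rk(\beta_0(L))=\rk(L)$, so $\beta_0|_L\colon L\to\beta_0(L)$ is an onto map between free groups of the same finite rank and is therefore, by Hopficity, injective. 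The same computation applied to $\beta_0\phi\beta_0\in\langle\B\rangle$ shows that each $\eta_\phi:=(\beta_0\phi)|_L$ (which lands in $L$, as $\eta_\phi(L)=\beta_0\phi\beta_0(F)\leqslant L$) is an injective endomorphism of $L$.

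The hypothesis is then used to move the inequality inside $L$. Writing $\Phi:=\fix\B$, the fact that $\beta_0$ fixes $\Phi$ pointwise gives $K\cap\Phi=\beta_0(K\cap\Phi)\leqslant\beta_0(K)\cap\Phi$, while the standing assumption $\beta_0(K)\cap\Phi\leqslant K$ together with $\beta_0(K)\cap\Phi\leqslant\Phi$ gives the reverse inclusion; hence $K\cap\Phi=\beta_0(K)\cap\Phi$. Since $\rk(\beta_0(K))\leqslant\rk(K)$, the desired bound $\rk(K\cap\Phi)\leqslant\rk(K)$ follows at once from $\rk\big(\beta_0(K)\cap\Phi\big)\leqslant\rk(\beta_0(K))$, that is, from the statement that $\Phi$ is \emph{inert in $L$}, tested against the subgroup $\beta_0(K)\leqslant L$.

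To prove $\Phi$ inert in $L$, I would identify $\Phi$ with the fixed subgroup in $L$ of the family $\mathcal{E}:=\{\beta_0|_L\}\cup\{\eta_\phi:\phi\in\B\}$ of injective endomorphisms of $L$, and then invoke Theorem~\ref{inj fixed subgp of free gp inert}. The inclusion $\Phi\leqslant\fix\mathcal{E}$ is immediate: if $g\in\Phi$ then $g\in L$, $\beta_0(g)=g$, and $\eta_\phi(g)=\beta_0(\phi(g))=\beta_0(g)=g$. The main obstacle is the reverse inclusion. For $g\in\fix\mathcal{E}$ one obtains $\beta_0(g)=g$ and $\beta_0(\phi(g))=\beta_0(g)$ for every $\phi\in\B$, so $\phi(g)=g\,z_\phi$ with $z_\phi\in\ker\beta_0$; the difficulty is to rule out nontrivial $z_\phi$, i.e. to show that no element of $L$ can be simultaneously fixed modulo $\ker\beta_0$ by all of $\B$ without being genuinely fixed. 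This is exactly where the minimal choice of $\beta_0$ must be exploited in full: in the automorphism case $\beta_0$ is an automorphism, $\ker\beta_0=1$, the obstruction disappears, and one recovers the known inertia of $\fix\B$. In general I expect the resolution to require controlling the interaction between $\ker\beta_0$ and the fixed data, presumably through an Imrich--Turner-style passage from the injective map $\beta_0|_L$ to an automorphism of a free group of rank $\leqslant\rk(L)$, combined with Bergman's section/free-factor Theorem~\ref{intersection of retracts of free gp}. Once $\fix\mathcal{E}=\Phi$ is established, Theorem~\ref{inj fixed subgp of free gp inert} yields inertness of $\Phi$ in $L$ and completes the proof.
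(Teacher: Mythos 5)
Your structural reductions are all correct (that $\fix\B\leqslant L:=\beta_0(F)$, that $\beta_0$ and each $\beta_0\phi$ restrict to injective endomorphisms of $L$, and that the hypothesis on $K$ yields $K\cap\fix\B=\beta_0(K)\cap\fix\B$), but the step you flag as the ``main obstacle'' is not just unproven --- the identity $\fix\mathcal{E}=\fix\B$ on which your plan rests is false in general, so the proposal has a genuine gap that cannot be closed along the proposed lines. Concretely, take $F=\langle a,b\rangle$ and $\B=\{\phi,\psi\}$ with $\phi\colon a\mapsto ab,\ b\mapsto b$ and $\psi\colon a\mapsto a,\ b\mapsto 1$. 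Every element of $\langle\B\rangle$ preserves the total $a$-exponent, so no image is trivial and the minimal image rank is $1$, attained by $\beta_0=\psi$ with $L=\langle a\rangle$. For any $\gamma\in\langle\B\rangle$ one has $\psi\gamma(a)=a^{|\gamma(a)|_a}=a$, so your family $\mathcal{E}$ fixes all of $L$ and $\fix\mathcal{E}=\langle a\rangle$; yet $\fix\B=\fix\phi\cap\fix\psi=1$, since $\fix\psi=\langle a\rangle$ and $\phi(a^n)=(ab)^n\neq a^n$ for $n\neq 0$. So the elements $z_\phi\in\ker\beta_0$ you hoped to rule out genuinely occur, and $\fix\B$ cannot in general be recognized as the fixed subgroup of a family of injective endomorphisms of $L$.

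The paper's proof avoids this entirely: it never claims $\fix(\beta_0\B)=\fix\B$, but works with the larger subgroup $\fix(\beta_0\B)$ (essentially your $\fix\mathcal{E}$), to which Theorem~\ref{inj fixed subgp of free gp inert} applies, giving $\rk\big(\beta_0(K)\cap\fix(\beta_0\B)\big)\leqslant\rk(\beta_0(K))$. The descent from $\fix(\beta_0\B)$ to $\fix\B$ is then achieved by Bergman's Theorem~\ref{intersection of retracts of free gp}, applied not to endomorphisms of $L$ but to the epimorphism $\beta_0|_E\colon E\twoheadrightarrow \beta_0(K)\cap\fix(\beta_0\B)$, where $E=\beta_0^{-1}\big(\beta_0(K)\cap\fix(\beta_0\B)\big)$: each $\gamma\in\B$ \emph{itself} (not $\beta_0\gamma$) restricts to a section of $\beta_0|_E$, the family of sections contains the inclusion (take $\gamma=Id$), and the equalizer of these sections is exactly $\beta_0(K)\cap\fix\B=K\cap\fix\B$ --- this is where the hypothesis on $K$ is used. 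Bergman's theorem then exhibits $K\cap\fix\B$ as a free factor of $\beta_0(K)\cap\fix(\beta_0\B)$, and the chain $\rk(K\cap\fix\B)\leqslant\rk\big(\beta_0(K)\cap\fix(\beta_0\B)\big)\leqslant\rk(\beta_0(K))\leqslant\rk(K)$ finishes the proof. You did name Bergman's theorem as a candidate tool, but the decisive idea you are missing is to use it on sections of $\beta_0|_E$, so that the original maps $\gamma$ re-enter and pin down the genuine fixed points, instead of trying to force the (false) equality $\fix\mathcal{E}=\fix\B$.
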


\begin{proof}
Suppose $F$ is a finitely generated free group, $\B \subseteq \edo(F)$ is an arbitrary
family of endomorphisms of $F$, and $\langle \B\rangle$ is the closure of $\B$ in
$\edo(F)$ by composition (note also that $Id \in \langle \B\rangle$). Since, for any
$\alpha,\beta\in \B$, $\fix\alpha \,\cap\, \fix\beta\subseteq {\rm Fix}(\alpha\beta)$,
it is clear that $\fix \langle \B\rangle =\fix\B$ and so, the inequality we have to
prove does not change when replacing $\B$ to $\langle \B\rangle$. Hence we can assume,
without loss of generality, that $\B$ itself is a submonoid of $\edo(F)$ i.e., $\langle
\B\rangle =\B$.

Now choose $\beta_0\in \B$ such that
 $$
\rk(\beta_0(F))=\min\{\rk(\gamma(F)) \mid \gamma\in\B\}.
 $$
Thus all elements of $\B$ act injectively on $\beta_0 (F)$. Let $\beta_0\B
=\{\beta_0\gamma \mid \gamma\in \B \} \subseteq \B$. Since $\beta_0 \gamma(\beta_0
(F))\leqslant \beta_0 (F)$ we get, by restriction, a family $\beta_0\B|_{\beta_0(F)}$ of
injective endomorphisms of the finitely generated free group $\beta_0(F)$,
 $$
\beta_0 \gamma|_{\beta_0 (F)}\colon \beta_0 (F)\to \beta_0 (F).
 $$
By Theorem~\ref{inj fixed subgp of free gp inert}, $\fix(\beta_0 \B )=\fix(\beta_0 \B
|_{\beta_0(F)})$ is inert in $\beta_0(F)$ that is, for every $L\leqslant \beta_0(F)$, we
have
\begin{equation}\label{eq.1}
\rk(L\cap \fix(\beta_0\B))\leqslant\rk(L).
\end{equation}

Now let $K\leqslant F$ be a subgroup such that $\beta_0 (K)\cap \fix\B \leqslant K$; we
have to show that $\rk (K\cap \fix\B)\leqslant \rk(K)$. Let
 $$
E=\beta_0^{-1}(\beta_0(K)\cap \fix(\beta_0 \B))\leqslant F.
 $$
By construction, $\beta_0$ gives an epimorphism of free groups,
 $$
\beta_0|_E \colon E\twoheadrightarrow \beta_0(K)\cap \fix(\beta_0\B),
 $$
with image being finitely generated. On the other hand, every $\gamma\in \B$ restricts to a
section of $\beta_0|_E$, namely
 $$
\gamma|_{\beta_0(K)\cap \fix(\beta_0 \B)} \colon \beta_0(K)\cap \fix(\beta_0 \B)\to E;
$$
in fact, for every $x\in \beta_0(K)\cap \fix(\beta_0 \B)$, it is clear that
$\beta_0\gamma(x)=x$ and so, $\gamma(x)\in E$ and $\beta_0\gamma|_{\beta_0(K)\cap
\fix(\beta_0 \B)} =Id_{\beta_0(K)\cap \fix(\beta_0 \B)}$; in particular, taking
$\gamma=Id$, we have $\beta_0(K)\cap \fix(\beta_0 \B)\leqslant E$. Hence, by
Theorem~\ref{intersection of retracts of free gp} applied to this family of sections, we
obtain that $\eq(\B|_{\beta_0(K)\cap \fix(\beta_0\B)})$ is a free factor of
$\beta_0(K)\cap \fix(\beta_0\B)$. Since this family of sections contains the inclusion
of $\beta_0(K)\cap \fix(\beta_0\B)$ into $E$, we have
 $$
\begin{array}{rcl} \eq(\B|_{\beta_0(K)\cap \fix(\beta_0\B)}) & = & \fix(\B|_{\beta_0(K)\cap
\fix(\beta_0\B)}) \\ & = & \fix \B \cap \beta_0(K)\cap \fix(\beta_0\B) \\ & = & \beta_0(K)
\cap \fix \B \\ & = & K\cap \fix \B.
\end{array}
 $$
(For one of the inclusions in the last equality we use our assumption on $K$, the other
one is immediate.) Hence, using equation~(\ref{eq.1}) for $L=\beta_0(K)$, we conclude
 $$
\rk(K\cap \fix \B)\leqslant \rk(\beta_0(K)\cap \fix(\beta_0\B))\leqslant \rk(\beta_0(K))
\leqslant \rk(K),
 $$
completing the proof.
\end{proof}

As mentioned above, the argument in Theorem~\ref{main free} provides an alternative
proof for Theorem~\ref{compression of fixed subgp in free gp}, easier than the one given
in~\cite{MV}: every subgroup $K$ with $\fix \B \leqslant K\leqslant F$ clearly satisfies
the hypothesis $\beta_0 (K)\cap \fix\B \leqslant K$ and so, we have $\rk (\fix \B )=\rk
(K\cap \fix\B)\leqslant \rk(K)$. This shows compression of $\fix \B$.

Another interesting consequence of Theorem~\ref{main free} is the following partial
positive solution to Conjecture~\ref{conj}. We hope this helps to its full resolution in
the future.

\begin{cor}\label{cor free}
Let $F$ be a finitely generated free group, let $\B\subseteq \edo(F)$ be an arbitrary
family of endomorphisms, let $\langle \B\rangle \leqslant \edo(F)$ be the submonoid
generated by $\B$, and let $\beta_0 \in \langle \B\rangle$ with image $\beta_0(F)$ of
minimal rank. Then, $\fix \B$ is inert in $\beta_0(F)$. Moreover, if $\beta_0(F)$ is
inert in $F$ then $\fix \B$ is inert in $F$ as well.
\end{cor}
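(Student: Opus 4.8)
The plan is to prove the two assertions in turn, the first being the substantive one and the second a purely formal consequence. The key observation for the first assertion is that the proof of Theorem~\ref{main free} actually establishes slightly more than its statement claims: for every subgroup $K\leqslant F$ with $\beta_0(K)\cap\fix\B\leqslant K$, the displayed chain of inequalities at the end of that proof gives
\[\rk(K\cap\fix\B)\leqslant\rk(\beta_0(K)\cap\fix(\beta_0\B))\leqslant\rk(\beta_0(K)),\]
and only the final step $\rk(\beta_0(K))\leqslant\rk(K)$ is then discarded. So first I would record this sharper inequality $\rk(K\cap\fix\B)\leqslant\rk(\beta_0(K))$. I would also note that $\fix\B\leqslant\beta_0(F)$, which makes the phrase ``$\fix\B$ inert in $\beta_0(F)$'' meaningful: since $\fix\B=\fix\langle\B\rangle$, the endomorphism $\beta_0\in\langle\B\rangle$ fixes $\fix\B$ pointwise, whence $\fix\B=\beta_0(\fix\B)\leqslant\beta_0(F)$.

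For the first assertion, let $L\leqslant\beta_0(F)$ be an arbitrary finitely generated subgroup; I must show $\rk(L\cap\fix\B)\leqslant\rk(L)$. I would set $K=\beta_0^{-1}(L)\leqslant F$ and check that it satisfies the hypothesis of Theorem~\ref{main free}. Since $L\leqslant\beta_0(F)=\im\beta_0$, we have $\beta_0(K)=L$. Moreover, for any $g\in\fix\B$ one has $\beta_0(g)=g$, so $g\in\beta_0^{-1}(L)=K$ if and only if $g\in L$; this yields simultaneously $K\cap\fix\B=L\cap\fix\B$ and $\beta_0(K)\cap\fix\B=L\cap\fix\B\leqslant K$, the latter being exactly the required hypothesis. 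Applying the sharper inequality then gives $\rk(L\cap\fix\B)=\rk(K\cap\fix\B)\leqslant\rk(\beta_0(K))=\rk(L)$, so $\fix\B$ is inert in $\beta_0(F)$.

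For the second assertion I would invoke transitivity of inertia along the chain $\fix\B\leqslant\beta_0(F)\leqslant F$. Given a finitely generated $K\leqslant F$, the subgroup $K\cap\beta_0(F)$ is finitely generated by Howson's theorem (both $K$ and $\beta_0(F)$ being finitely generated subgroups of the free group $F$), so inertia of $\fix\B$ in $\beta_0(F)$ applies to it: using $\fix\B\leqslant\beta_0(F)$ we get $\rk(K\cap\fix\B)=\rk((K\cap\beta_0(F))\cap\fix\B)\leqslant\rk(K\cap\beta_0(F))$, and the hypothesis that $\beta_0(F)$ is inert in $F$ gives $\rk(K\cap\beta_0(F))\leqslant\rk(K)$. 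Combining these yields $\rk(K\cap\fix\B)\leqslant\rk(K)$, so $\fix\B$ is inert in $F$.

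I expect the only genuine subtlety to lie in the first assertion, and specifically in the need to reopen the proof of Theorem~\ref{main free} to extract the sharper bound $\rk(K\cap\fix\B)\leqslant\rk(\beta_0(K))$ rather than using the theorem as a black box. A direct application with $K=\beta_0^{-1}(L)$ would otherwise only bound $\rk(L\cap\fix\B)$ by $\rk(\beta_0^{-1}(L))$, which is useless since $\beta_0^{-1}(L)$ absorbs $\ker\beta_0$ and can have arbitrarily large rank. The remaining verifications (that $\beta_0(K)=L$, that the hypothesis $\beta_0(K)\cap\fix\B\leqslant K$ holds, and the transitivity of inertia) are routine.
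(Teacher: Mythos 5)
Your proof is correct, but for the first assertion you take a genuinely different route from the paper, and your closing claim that one \emph{needs} to reopen the proof of Theorem~\ref{main free} is mistaken. The paper applies the theorem as a black box with $K=L$ itself: given $L\leqslant\beta_0(F)$, the hypothesis $\beta_0(L)\cap\fix\B\leqslant L$ is checked directly, since any $x\in\beta_0(L)\cap\fix\B$ satisfies $\beta_0(l)=x=\beta_0(x)$ for some $l\in L$, and both $l$ and $x$ lie in $\beta_0(F)$, on which $\beta_0$ acts injectively by the minimality of $\rk(\beta_0(F))$ (via Hopficity: otherwise $\beta_0^2\in\langle\B\rangle$ would have image of strictly smaller rank); hence $x=l\in L$, and the theorem yields $\rk(L\cap\fix\B)\leqslant\rk(L)$ at once. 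Your pull-back $K=\beta_0^{-1}(L)$ trades this injectivity observation for triviality of the hypothesis (it becomes automatic because $\beta_0$ fixes $\fix\B$ pointwise), but, as you correctly diagnose, it then forces you to extract the sharper intermediate bound $\rk(K\cap\fix\B)\leqslant\rk(\beta_0(K))$ from inside the proof, since $\rk(\beta_0^{-1}(L))$ is uncontrolled; that extraction is legitimate --- the displayed chain at the end of the paper's proof does establish it --- so your argument is sound, just white-box where the paper's is black-box and arguably less robust for that reason. Your explicit remark that $\fix\B=\beta_0(\fix\B)\leqslant\beta_0(F)$ is a useful point the paper leaves implicit. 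For the second assertion you match the paper, which simply invokes transitivity of inertia; your spelled-out version is correct, though the appeal to Howson's theorem is superfluous: inertia of $\beta_0(F)$ in $F$ applied to a finitely generated $K$ already gives $\rk(K\cap\beta_0(F))\leqslant\rk(K)<\infty$, so $K\cap\beta_0(F)$ is finitely generated for free, and this version of the transitivity argument works in any ambient group, not just where Howson's property holds.
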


\begin{proof}
The first statement follows directly from Theorem~\ref{main free} as soon as we show
that every $K\leqslant \beta_0(F)$ satisfies the condition $\beta_0 (K)\cap \fix\B
\leqslant K$. Let $x\in \beta_0 (K)\cap \fix\B$; this implies that
$\beta_0(k)=x=\beta_0(x)$ for some $k\in K$. But both $k$ and $x$ belong to
$\beta_0(F)$, where $\beta_0$ is injective by the minimality condition in the definition
of $\beta_0$. Hence, $x=k\in K$.

For the second statement we only need to recall transitivity of the inertia property (if
$A\leqslant B\leqslant C$, and $A$ is inert in $B$, and $B$ is inert in $C$, then $A$ is
inert in $C$).
\end{proof}

\begin{cor}\label{cor free 2}
Let $F$ be a finitely generated free group, and let $\B\subseteq \edo(F)$ be an
arbitrary family of endomorphisms. If some composition of endomorphisms from $\B$ has
image of rank 1 or 2, then $\fix \B$ is inert in $F$.
\end{cor}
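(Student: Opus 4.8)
The plan is to deduce Corollary~\ref{cor free 2} as a direct special case of Corollary~\ref{cor free}, by exhibiting the hypothesis ``image of rank $1$ or $2$'' as precisely the situation where the second clause of Corollary~\ref{cor free} applies. First I would set up the notation: let $\langle\B\rangle$ be the submonoid generated by $\B$, and pick $\beta_0\in\langle\B\rangle$ whose image $\beta_0(F)$ has minimal rank among all images of elements of $\langle\B\rangle$. The assumption that some composition of endomorphisms from $\B$ has image of rank $1$ or $2$ means some element of $\langle\B\rangle$ has image of rank $\leqslant 2$; by minimality of $\beta_0$, this forces $\rk(\beta_0(F))\leqslant 2$ (and, to avoid the trivial case, we may note that rank $0$ gives $\fix\B=1$, which is inert). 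By Corollary~\ref{cor free}, $\fix\B$ is inert in $\beta_0(F)$, so it remains only to show that $\beta_0(F)$ is inert in $F$.

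The crux, then, is to verify that \emph{every} subgroup of $F$ of rank $1$ or $2$ is inert in $F$; granting this, the ``Moreover'' clause of Corollary~\ref{cor free} (transitivity of inertia through $\fix\B\leqslant\beta_0(F)\leqslant F$) finishes the proof. For a subgroup $H$ of rank $1$, inertia is immediate: for any finitely generated $K\leqslant F$, the intersection $K\cap H$ is a subgroup of the cyclic group $H$, hence cyclic, so $\rk(K\cap H)\leqslant 1$, and a cyclic intersection certainly satisfies $\rk(K\cap H)\leqslant\rk(K)$ whenever $K$ is nontrivial (the only way $\rk(K)=0$ is $K=1$, forcing $K\cap H=1$). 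So the real content is the rank-$2$ case.

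For a subgroup $H\leqslant F$ of rank $2$, I would invoke the Hanna Neumann inequality, now a theorem by the work of Friedman and Mineyev quoted in the introduction: for finitely generated $A,B\leqslant F$,
 $$
\rk(A\cap B)-1\leqslant(\rk(A)-1)(\rk(B)-1).
 $$
Applying this with $A=H$ (so $\rk(A)=2$) and $B=K$ an arbitrary finitely generated subgroup of $F$, we get
 $$
\rk(K\cap H)-1\leqslant(2-1)(\rk(K)-1)=\rk(K)-1,
 $$
that is, $\rk(K\cap H)\leqslant\rk(K)$, which is exactly the inertness of $H$ in $F$. (If $\rk(K)\leqslant 1$ the inequality is handled as in the previous paragraph, since $K\cap H\leqslant K$ is then cyclic.) Thus $\beta_0(F)$, having rank $\leqslant 2$, is inert in $F$, and the corollary follows.

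The main obstacle I anticipate is not in the logic but in getting the rank-$2$ inertia cleanly: one must be careful that the Hanna Neumann inequality is applied in its correct (now fully proved) form and that the edge cases $\rk(K)\in\{0,1\}$ are dispatched separately, since the inequality $\rk(A\cap B)-1\leqslant(\rk A-1)(\rk B-1)$ is cleanest when both factors have rank at least $1$. Everything else is a bookkeeping translation of ``rank $1$ or $2$'' into ``$\rk(\beta_0(F))\leqslant 2$'' plus an appeal to the already-established Corollary~\ref{cor free}.
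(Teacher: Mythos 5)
Your proof is correct and follows essentially the same route as the paper: reduce to Corollary~\ref{cor free} via a minimal-rank $\beta_0\in\langle\B\rangle$ and then establish that rank~$\leqslant 2$ subgroups of $F$ are inert using the (now proven) Hanna Neumann inequality. The paper compresses this into a few lines, noting that for the rank-$2$ case one can even cite Tardos's earlier special case of the conjecture instead of the full Friedman--Mineyev theorem; your explicit handling of the rank-$0$, rank-$1$, and small-$\rk(K)$ edge cases is just a more careful writeup of the same argument.
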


\begin{proof}
By assumption, the minimal rank of the image of the endomorphisms in $\langle \B\rangle$
is either 1 or 2. Let $\beta_0 \in \langle \B\rangle$ realize such minimum. Then
$\beta_0(F)$ is inert in $F$ (by the positive solution to H. Neumann conjecture or,
better, by the special case previously proved by Tardos~\cite{T}). Hence, by
Corollary~\ref{cor free}, $\fix \B$ is inert in $F$.
\end{proof}

\section{Fixed points in surface groups}\label{surface}

Before studying fixed subgroups in surface groups, let us remind the following folklore
facts about surface groups which will be used later. A first fact where the assumption of
negative Euler characteristic is crucial, is about the center and the centralizer of
non-trivial elements in surface groups. The following lemma can be easily deduced from the
fact that closed surfaces with negative Euler characteristic admit a hyperbolic metric (see
Theorem~1.2 and the subsequent comment in page~22 of~\cite{FM}).

\begin{lem}\label{centr}
Let $G$ be a surface group with $\chi (G)<0$. Then its center is trivial, $Z(G)=1$, and the
centralizer of any non-trivial element $1\neq g\in G$ is infinite cyclic, $\cen_G(g)\simeq
\mathbb{Z}$.
\end{lem}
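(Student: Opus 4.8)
The plan is to invoke the hyperbolic geometry of $G$ directly, as the cited reference suggests. Since $\chi(G)<0$, the underlying closed surface $X$ carries a complete hyperbolic metric, so $G=\pi_1(X)$ acts freely, properly discontinuously, and cocompactly by isometries on the hyperbolic plane $\mathbb{H}^2$. This realizes $G$ as a discrete, torsion-free, cocompact subgroup of $\mathrm{Isom}(\mathbb{H}^2)$ (contained in the orientation-preserving part $\mathrm{PSL}_2(\Real)$ when $X$ is orientable, and meeting the orientation-reversing coset otherwise); torsion-freeness holds because a nontrivial finite-order isometry of $\mathbb{H}^2$ fixes a point, contradicting freeness of the action.

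I would first classify the elements of $G$ using the standard trichotomy for isometries of $\mathbb{H}^2$. Elliptic isometries and reflections have finite order, hence are excluded by torsion-freeness; parabolic isometries are excluded because a cocompact action has no cusps. Therefore every $1\neq g\in G$ is either a hyperbolic translation (orientation-preserving) or a glide reflection (orientation-reversing), and in either case it admits a \emph{unique} invariant geodesic axis $\ell_g$ along which it acts by a nonzero translation of length $d(g)>0$.

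Next I would compute $\cen_G(g)$ for a fixed $1\neq g\in G$ with axis $\ell=\ell_g$. Any isometry commuting with $g$ must preserve $\ell$ together with its two endpoints at infinity, in their order: an isometry swapping the two ends would reverse $\ell$ and thus conjugate $g$ to $g^{-1}\neq g$. The group $H$ of all isometries of $\mathbb{H}^2$ preserving $\ell$ and each of its ends consists of the translations along $\ell$ together with the glide reflections sharing $\ell$, and the assignment of signed translation length gives a homomorphism $d\colon H\to(\Real,+)$ whose kernel is $\{\mathrm{id},r_\ell\}\cong\Z/2\Z$, where $r_\ell$ is the reflection in $\ell$. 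Since $\cen_G(g)\leqslant H$ and $G$ is torsion-free, $\cen_G(g)$ misses the involution $r_\ell$, so $d$ is injective on it; discreteness of $G$ forces $d(\cen_G(g))$ to be a discrete, and (as it contains $d(g)>0$) nontrivial, subgroup of $\Real$. Hence $\cen_G(g)\cong d(\cen_G(g))\cong\Z$. For the center, if some $1\neq z\in Z(G)$ existed, then $G=\cen_G(z)\cong\Z$ would be infinite cyclic, contradicting that a surface group with $\chi(G)<0$ is non-cyclic (its abelianization has torsion-free rank at least $2$); thus $Z(G)=1$.

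The hard part will be the uniform bookkeeping across the orientable and non-orientable cases in the centralizer computation: one must confirm that the orientation-reversing elements commuting with $g$ are exactly the glide reflections along $\ell$, that $r_\ell$ is the only torsion element among them, and that intersecting $H$ with the discrete, torsion-free $G$ therefore still yields a rank-one discrete subgroup of $\Real$. Once the classification of $\mathbb{H}^2$-isometries and the no-parabolics consequence of cocompactness are in hand, the remaining steps are routine.
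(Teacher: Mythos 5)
Your proposal is correct and follows exactly the route the paper itself indicates: the paper states this lemma without proof, remarking that it ``can be easily deduced'' from the existence of a hyperbolic metric on closed surfaces with $\chi<0$ (citing Farb--Margalit), and your argument is a careful execution of precisely that deduction, correctly handling glide reflections in the non-orientable case and the exclusion of the reflection $r_\ell$ by torsion-freeness. The only step you gloss (that discreteness of $G$ in $\mathrm{Isom}(\mathbb{H}^2)$ forces $d(\cen_G(g))$ to be discrete in $\Real$, via finiteness of the kernel $\Z/2\Z$) is routine, so the proof is complete.
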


\begin{rem}\label{centr exc}
The same result is true for free groups $F_r$ of rank $r\geqslant 2$; this will be
crucial for the arguments in Section~\ref{products}. However, it is no longer true for
the remaining groups within the family of finitely generated free and surface groups:
$F_0=S_0 =1$ is trivial, $F_1=\mathbb{Z}$, $S_1=\mathbb{Z}^2$ and $NS_1
=\mathbb{Z}/2\mathbb{Z}$ are abelian, and the \emph{Klein bottle group},
 $$
NS_2 = \langle a_1, a_2 \mid a_1^2a_2^2 =1\rangle \simeq \langle a,b \mid aba^{-1}b\rangle,
 $$
is not abelian but has center isomorphic to $\mathbb{Z}$, with generator $a_1^2=a^2$
(the isomorphism above is given by $a_1\mapsto a$, $a_2\mapsto a^{-1}b$); in fact,
$NS_2$ is virtually abelian since $\mathbb{Z}^2\simeq \langle a^2, b\rangle$ is an index
two normal subgroup of $NS_2$.
\end{rem}

The following are basic facts on surface groups, we include the proofs here for
completeness (see also~\cite[Lemma~2.7]{WZ}).

\begin{lem}\label{subgp of surface gp}
Let $G$ be a surface group with $\chi(G)<0$. Then,
\begin{itemize}
\item[(i)] if $H<G$ is a proper subgroup with $\rk(H)\leqslant \rk(G)$, then $H$ is a
    free group;
\item[(ii)] if $\phi\colon G\to G$ is a non-epimorphic endomorphism, then $\phi(G)$ is
    a free group of rank $\rk (\phi(G))\leqslant \frac{1}{2}\rk(G)$;
\item[(iii)] the group $G$ is both Hophian and co-Hopfian (i.e., all injective and all
    surjective endomorphisms of $G$ are, in fact, automorphisms).
\end{itemize}
\end{lem}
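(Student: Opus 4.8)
The plan is to reduce all three parts to the classical description of subgroups of a closed surface group, together with some bookkeeping of ranks and Euler characteristics. Recall (via covering space theory) that a subgroup $H\leqslant G=\pi_1(\Sigma)$ corresponds to a cover of $\Sigma$: if $[G:H]=\infty$ the cover is a non-compact surface, hence homotopy equivalent to a graph, so $H$ is free; if $[G:H]<\infty$ the cover is a closed surface, so $H$ is again a closed surface group. Granting this, part (i) follows from rank/Euler-characteristic bookkeeping: for every closed surface group one has $\rk=2-\chi$ (indeed $\rk(S_g)=2g=2-\chi(S_g)$ and $\rk(NS_k)=k=2-\chi(NS_k)$). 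Now if $H<G$ is proper with $\rk(H)\leqslant\rk(G)$, I would rule out the finite-index case: were $[G:H]=n\geqslant 2$, then $H$ would be a closed surface group with $\chi(H)=n\chi(G)$, whence $\rk(H)-\rk(G)=(n-1)(-\chi(G))>0$ since $\chi(G)<0$, contradicting the hypothesis. Thus $[G:H]=\infty$ and $H$ is free.

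For part (ii), if $\phi$ is non-epimorphic then $P:=\phi(G)$ is a proper subgroup with $\rk(P)\leqslant\rk(G)$ (it is generated by the $\rk(G)$ images of a generating set of $G$), so by part (i) $P$ is free; write $r=\rk(P)$. To get the sharper bound $r\leqslant\tfrac12\rk(G)$ I would pass to $\mathbb{F}_2$-cohomology and use the non-degenerate cup-product pairing $H^1(G;\mathbb{F}_2)\times H^1(G;\mathbb{F}_2)\to H^2(G;\mathbb{F}_2)=\mathbb{F}_2$ coming from Poincar\'e duality (valid for every closed surface, orientable or not). The corestriction $\pi\colon G\twoheadrightarrow P$ of $\phi$ induces an injection $\pi^*\colon H^1(P;\mathbb{F}_2)\hookrightarrow H^1(G;\mathbb{F}_2)$, and since $P$ is free we have $H^2(P;\mathbb{F}_2)=0$, so $\pi^*a\cup\pi^*b=\pi^*(a\cup b)=0$; that is, the $r$-dimensional subspace $\im\pi^*$ is totally isotropic. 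A totally isotropic subspace $W$ of a non-degenerate pairing satisfies $W\subseteq W^\perp$, hence $\dim W\leqslant\tfrac12\dim H^1(G;\mathbb{F}_2)=\tfrac12\rk(G)$ (using $\dim_{\mathbb{F}_2}H^1(G;\mathbb{F}_2)=\rk(G)$ for surface groups). Therefore $r\leqslant\tfrac12\rk(G)$.

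Finally, for part (iii): Hopfianity is immediate because finitely generated surface groups are residually finite, and finitely generated residually finite groups are Hopfian (Malcev). For co-Hopfianity, suppose $\phi$ is injective but not surjective; then $\phi$ is an isomorphism onto the proper subgroup $P=\phi(G)$, which by part (i) is free, so $G\cong P$ would be free. This is impossible, since a closed surface group with $\chi(G)<0$ has cohomological dimension $2$ (e.g. $H^2(G;\mathbb{F}_2)\neq 0$ by Poincar\'e duality) whereas free groups have cohomological dimension $1$; hence every injective endomorphism is surjective. I expect the main obstacle to be the factor-$\tfrac12$ bound in part (ii): the freeness statements are routine covering-space theory, but the sharp rank inequality genuinely needs the non-degeneracy of the intersection form, and one must take care to run the argument over $\mathbb{F}_2$ so that it also covers the non-orientable surfaces (over $\mathbb{Q}$ the pairing degenerates when $\Sigma$ is non-orientable).
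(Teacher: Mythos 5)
Your proof is correct, and parts (i) and (iii) run essentially along the paper's own lines: for (i) the paper also invokes the free-or-finite-index dichotomy for subgroups of surface groups together with the multiplicativity $\chi(H)=[G:H]\,\chi(G)$ (your $\rk=2-\chi$ bookkeeping is the same computation), and for (iii) it also combines residual finiteness (Malcev) with (i), leaving implicit the fact — which you justify via cohomological dimension — that a surface group with $\chi(G)<0$ is not free. The genuine divergence is in (ii): the paper disposes of the $\tfrac{1}{2}$-bound in one line by citing the inner-rank theorem from Lyndon--Schupp (the maximal rank of a free quotient of $G$ is at most $\tfrac{1}{2}\rk(G)$), whereas you reprove that bound from scratch using the non-degenerate cup-product pairing on $H^1(G;\mathbb{F}_2)$ furnished by mod-$2$ Poincar\'e duality: the pullback of $H^1(P;\mathbb{F}_2)$ along the epimorphism $G\twoheadrightarrow P=\phi(G)$ is injective and totally isotropic because $H^2(P;\mathbb{F}_2)=0$ for $P$ free, so its dimension is at most $\tfrac{1}{2}\dim H^1(G;\mathbb{F}_2)=\tfrac{1}{2}\rk(G)$. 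This is a clean, self-contained argument, and your insistence on $\mathbb{F}_2$ coefficients is exactly what makes it treat the non-orientable surfaces uniformly (over $\mathbb{Q}$ the pairing indeed degenerates there). The one step you gloss over is the identification of $H^*(G;\mathbb{F}_2)$ with $H^*(\Sigma;\mathbb{F}_2)$ as rings, which uses that $\Sigma$ is aspherical — guaranteed here since $\chi(G)<0$. In short: the paper's route buys brevity via a citation; yours buys independence from the Lyndon--Schupp inner-rank result at the cost of a little algebraic topology, and both are sound.
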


\begin{proof}
(i) follows easily from the fact that subgroups of $G$ are either free or finite index;
and the subgroups $H\leqslant G$ in this last family are again surface groups with
negative Euler characteristic and satisfying $\chi(H)/\chi(G)=[G:H]>1$ hence,
$\rk(H)>\rk(G)$.

For (ii), we recall the fact that the \emph{inner rank} of $G$ i.e., the maximal rank of
a free quotient of $G$, is at most $\frac{1}{2}\rk(G)$ (see Lyndon--Schupp~\cite[page
52]{LS}).

Finally, by a classical result, surface groups are residually finite (see~\cite{Hem} for
a short proof) and so, Hophian. On the other hand, if $\phi\in \edo(G)$ is injective
then the image $\phi(G)$ is isomorphic to $G$, but $\rk(\phi(G))\leqslant \rk(G)$; by
(i), $\phi \in \aut(G)$. This completes the proof of (iii).
\end{proof}

\begin{rem}
Again, the situation is different without the hypothesis of negative Euler characteristic.
For the case of the torus, $S_1=\mathbb{Z}^2$ violates (i), (ii) and the co-Hopfianity. For
the projective plane, $NS_1 =\mathbb{Z}/2\mathbb{Z}$ satisfies the Lemma but with trivial
meaning. And, finally, the Klein bottle group $NS_2$ violates again (i), (ii) and the
co-hopfianity, since $a\mapsto a^3$, $b\mapsto b$ defines an injective non-surjective
endomorphism whose image is isomorphic to $NS_2$ itself. (Both $\mathbb{Z}^2$ and $NS_2$ are
Hopfian, like all surface groups without exception.)
\end{rem}

The proof of Theorem~\ref{main free} works for surface groups as well, with some extra
arguments distinguishing whether certain involved subgroups are free or finite index. We
reproduce that argument here to highlight these important points. It works for surface
groups with negative Euler characteristic; however, for the exceptional ones one can
directly prove the inertia conjecture.

\begin{prop}\label{inertia small}
Let $G$ be either $F_0=S_0 =1$, or $S_1=\mathbb{Z}^2$, or $NS_1 =\mathbb{Z}/2\mathbb{Z}$, or
$NS_2$, and let $\B\subseteq\edo(G)$ be an arbitrary family of endomorphisms. Then, $\fix\B$
is inert in $G$.
\end{prop}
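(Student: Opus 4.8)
The plan is to handle each of the four exceptional groups separately, since they are so small and explicit that inertia can be checked essentially by hand, rather than via the injective-endomorphism machinery used for $\chi(G)<0$. Recall we must show $\rk(K\cap \fix\B)\leqslant \rk(K)$ for every finitely generated $K\leqslant G$. I would first reduce, exactly as in Theorem~\ref{main free}, to the case where $\B$ is a submonoid of $\edo(G)$ (using $\fix\langle\B\rangle =\fix\B$), and observe that it suffices to treat a \emph{single} endomorphism in some cases, since $\fix\B=\cap_{\phi\in\B}\fix\phi$ and inert subgroups are closed under intersection in each of these groups.

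\medskip
\textbf{The trivial and finite cases.} For $G=S_0=1$ everything is trivial. For $G=NS_1=\mathbb{Z}/2\mathbb{Z}$ the only subgroups are $1$ and $G$ itself, both of rank $\leqslant 1$, and $\fix\B$ is one of these; checking $\rk(K\cap\fix\B)\leqslant\rk(K)$ against the two possible $K$ is immediate. These dispose of the two degenerate groups with no real content.

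\medskip
\textbf{The torus $S_1=\mathbb{Z}^2$.} Here $\edo(\mathbb{Z}^2)$ is just the monoid of $2\times 2$ integer matrices acting on $\mathbb{Z}^2$, and $\fix\phi=\ker(\phi-I)$. So $\fix\B=\cap_{\phi}\ker(\phi-I)$ is a \emph{pure} (i.e.\ isolated) subgroup of $\mathbb{Z}^2$: a common kernel is always a direct summand of $\mathbb{Z}^2$. The key point I would establish is that every subgroup $H\leqslant\mathbb{Z}^2$ which is pure (equivalently, a direct summand) is inert: for such $H$ and any $K\leqslant\mathbb{Z}^2$, the intersection $K\cap H$ is free abelian of rank at most $\min(\rk H,\rk K)\leqslant \rk K$, because $K\cap H\leqslant K$ and subgroups of $\mathbb{Z}^2$ have rank bounded by the rank of any group containing them. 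In fact in $\mathbb{Z}^n$ \emph{every} subgroup is inert, since rank is monotone under inclusion for free abelian groups, so $\rk(K\cap H)\leqslant\rk(K)$ holds with no purity needed at all; I would simply invoke this monotonicity.

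\medskip
\textbf{The Klein bottle group $NS_2$.} This is the only case with genuine content, and I expect it to be the main obstacle. Using the presentation $NS_2=\langle a,b\mid aba^{-1}b\rangle$ from Remark~\ref{centr exc}, I would exploit the index-two normal subgroup $\mathbb{Z}^2\simeq\langle a^2,b\rangle$ and the central infinite cyclic $Z(NS_2)=\langle a^2\rangle$. The strategy is to understand $\fix\B$ and arbitrary $K$ through their interaction with this $\mathbb{Z}^2$. Concretely, for a finitely generated $K\leqslant NS_2$, either $K\leqslant\mathbb{Z}^2$ (abelian, $\rk\leqslant 2$) or $K$ maps onto the quotient $\mathbb{Z}/2$; in the latter case $K$ is itself (virtually $\mathbb{Z}^2$, hence) a Klein-bottle-type or $\mathbb{Z}^2$-type group of rank $2$. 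The same dichotomy applies to $\fix\B$. I would then check the inequality $\rk(K\cap\fix\B)\leqslant\rk(K)$ by cases according to which of these two types $K$ and $\fix\B$ fall into, using that any subgroup of $NS_2$ has rank at most $2$ (it is free abelian of rank $\leqslant 2$ when inside $\mathbb{Z}^2$, and of rank $2$ otherwise). The delicate subcase is $\rk(K)=1$: here $K$ is infinite cyclic, $K\cap\fix\B\leqslant K$ is again cyclic, so $\rk(K\cap\fix\B)\leqslant 1=\rk(K)$ automatically; and when $\rk(K)=2$ the bound $\rk(K\cap\fix\B)\leqslant 2$ is free. Thus the only thing to rule out is $\rk(K)=0$ with $\fix\B\cap K$ nontrivial, which is impossible. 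The hard part is organizing these cases cleanly and verifying that no subgroup of $NS_2$ has rank exceeding $2$, which follows from its virtual $\mathbb{Z}^2$ structure together with the Nielsen--Schreier-type bound on subgroups of this two-generator one-relator group.
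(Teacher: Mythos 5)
Your proposal is correct, and on the only case with real content, $NS_2$, it takes a genuinely different route from the paper. The paper treats $S_0$, $NS_1$ and $S_1$ exactly as you do (triviality, and monotonicity of rank under inclusion in $\Z^2$; your detour through purity of $\ker(\phi-I)$ is true but, as you note yourself, unnecessary). For the Klein bottle group, however, the paper does not argue structurally: it quotes the classification from \cite[Example~6.2]{WZ}, namely that $\fix\phi\simeq \Z^2,\Z,1$ for every $Id\neq\phi\in\edo(NS_2)$, whence the same holds for $\fix\B$ with $\B\neq\{Id\}$, and inertia follows. You instead prove the stronger statement that \emph{every} subgroup of $NS_2$ is inert: since all subgroups of $NS_2$ have rank at most $2$, the inequality $\rk(K\cap\fix\B)\leqslant\rk(K)$ is automatic when $\rk(K)=2$; when $K$ is cyclic the intersection is a subgroup of a cyclic group, hence cyclic of rank at most $1$; and the case $K=1$ is vacuous. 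This is more elementary (no appeal to the endomorphism classification of $NS_2$) and more general (it never uses that $\fix\B$ is a fixed subgroup), at the price of needing the sharp bound $\rk(H)\leqslant 2$ for all $H\leqslant NS_2$ --- a bound the paper itself establishes, by an explicit computation with the abelianization sequence, in Corollary~\ref{facil} (stated there for a later purpose, but logically available here). Note also that your opening reduction to a submonoid, and the closure of inert subgroups under intersection, become superfluous in your approach, since you show all subgroups are inert.

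Two slips in your $NS_2$ discussion deserve correction, though neither damages the proof. First, the dichotomy ``either $K\leqslant\Z^2$ or $K$ surjects onto $\Z/2\Z$ and then has rank $2$'' is false as stated: $K=\langle a\rangle$ surjects onto the $\Z/2\Z$ quotient yet is infinite cyclic of rank $1$; this is harmless because your final case analysis is organized by $\rk(K)\in\{0,1,2\}$, not by the dichotomy. Second, there is no ``Nielsen--Schreier-type bound'' for subgroups of two-generator one-relator groups --- the genus-two surface group $S_2$ is one-relator on four generators and has finite-index subgroups of arbitrarily large rank, and virtual $\Z^2$-ness alone only gives $\rk(H)\leqslant 3$ directly (index-two intersection with $\Z^2$ plus one coset representative). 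The clean justification of $\rk(H)\leqslant 2$ is either the gcd computation of Corollary~\ref{facil}, or the structural one: infinite-index subgroups of a surface group are free, free subgroups of the virtually abelian $NS_2$ are cyclic, and finite-index subgroups are again $\Z^2$ or $NS_2$, of rank $2$. With that substitution your argument is complete.
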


\begin{proof}
For $F_0=S_0 =1$ and $NS_1 =\mathbb{Z}/2\mathbb{Z}$ the result is trivially true. In
$S_1=\mathbb{Z}^2$ subgroups satisfy, in general, the implication $H\leqslant K \,
\Rightarrow \, \rk(H)\leqslant \rk (K)$ so, again, the result is clearly true.

For the Klein bottle group $NS_2$, Wu--Zhang showed in \cite[Example 6.2]{WZ} that, for
every $Id\neq \phi\in\edo(NS_2)$, we always have $\fix \phi \simeq \Z^2, \Z, 1$. Thus,
for every $\{ Id\}\neq \B \subseteq \edo(NS_2)$, we also have $\fix\B \simeq \Z^2, \Z,
1$ and so, $\fix\B$ is inert in $NS_2$.
\end{proof}

\begin{thm}\label{main surface}
Let $G$ be a surface group, let $\B\subseteq \edo(G)$ be an arbitrary family of
endomorphisms, let $\langle \B\rangle \leqslant \edo(G)$ be the submonoid generated by $\B$,
and let $\beta_0 \in \langle \B\rangle$ with image $\beta_0(G)$ of minimal rank. Then, for
every subgroup $K\leqslant G$ such that $\beta_0 (K)\cap \fix\B \leqslant K$, we have $\rk
(K\cap \fix\B)\leqslant \rk(K)$.
\end{thm}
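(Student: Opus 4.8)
The plan is to run the argument of Theorem~\ref{main free} essentially verbatim, inserting a case analysis at the two places where the free-group proof tacitly used that the relevant subgroups are free. First I would dispose of the four exceptional surface groups $S_0,\,S_1,\,NS_1,\,NS_2$: for these Proposition~\ref{inertia small} already gives that $\fix\B$ is inert, which yields $\rk(K\cap\fix\B)\leqslant\rk(K)$ for \emph{every} $K$ (the hypothesis on $K$ is not even needed). From then on I would assume $\chi(G)<0$ and, exactly as in the free case, reduce to $\B=\langle\B\rangle$ using $\fix\langle\B\rangle=\fix\B$, and fix $\beta_0\in\B$ with $\rk(\beta_0(G))$ minimal.

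The first new step is a dichotomy according to whether $\beta_0$ is surjective. If it is, then minimality forces every $\gamma\in\B$ to satisfy $\rk(\gamma(G))=\rk(G)$, since $\gamma(G)$ is a quotient of $G$ and hence has rank at most $\rk(G)$; by Lemma~\ref{subgp of surface gp}(ii) a non-surjective $\gamma$ would have image of rank at most $\tfrac12\rk(G)$, so every $\gamma$ is surjective and therefore, by co-Hopficity (Lemma~\ref{subgp of surface gp}(iii)), an automorphism. Thus $\B\subseteq\aut(G)$ and Theorem~\ref{WZ thm}(iii) gives that $\fix\B$ is inert in $G$, which again settles the claim. This reduces me to the case where $\beta_0$ is non-surjective, so that $\beta_0(G)$ is a finitely generated \emph{free} group by Lemma~\ref{subgp of surface gp}(ii).

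With $\beta_0(G)$ free, the skeleton of the proof of Theorem~\ref{main free} transfers unchanged: minimality together with Hopficity of free groups shows every $\gamma\in\B$ restricts to an injective endomorphism of $\beta_0(G)$, so Theorem~\ref{inj fixed subgp of free gp inert} yields that $\fix(\beta_0\B)$ is inert in $\beta_0(G)$, giving $\rk\big(\beta_0(K)\cap\fix(\beta_0\B)\big)\leqslant\rk(\beta_0(K))\leqslant\rk(K)$. Setting $E=\beta_0^{-1}\big(\beta_0(K)\cap\fix(\beta_0\B)\big)$, the restriction $\beta_0|_E$ is an epimorphism onto the finitely generated free group $\beta_0(K)\cap\fix(\beta_0\B)$, each $\gamma\in\B$ restricts to a section of it, and the same computation as in Theorem~\ref{main free} identifies the equalizer of this family of sections with $K\cap\fix\B$.

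The one genuine obstacle, and the second place where a case split is needed, is that $E$ is a subgroup of a surface group and so need not be free, whereas Theorem~\ref{intersection of retracts of free gp} requires the source of the epimorphism to be free. Here I would invoke Lemma~\ref{subgp of surface gp}(i): either $E$ is free, in which case Bergman's Theorem~\ref{intersection of retracts of free gp} applies and shows that $K\cap\fix\B$ is a free factor of $\beta_0(K)\cap\fix(\beta_0\B)$; or $E$ has finite index in $G$, hence is itself a surface group with $\chi(E)<0$, in which case Proposition~\ref{equalizer of endomorphisms of surface group} (on sections of an epimorphism from a surface group onto a free group) applies instead. Either way I obtain $\rk(K\cap\fix\B)\leqslant\rk\big(\beta_0(K)\cap\fix(\beta_0\B)\big)$, and combining this with the inertia inequality above gives $\rk(K\cap\fix\B)\leqslant\rk(\beta_0(K))\leqslant\rk(K)$, as required. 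I expect the bookkeeping in this final dichotomy — verifying that $E$ is finitely generated and that the finite-index case genuinely lands in the hypotheses of Proposition~\ref{equalizer of endomorphisms of surface group} — to be the only delicate point.
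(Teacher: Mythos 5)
Your proposal is correct and takes essentially the same route as the paper's own proof: dispose of $\chi(G)\geqslant 0$ via Proposition~\ref{inertia small}, reduce to a submonoid, split into the all-epimorphisms case (where $\B\subseteq \aut(G)$ and Theorem~\ref{WZ thm}(iii) applies) versus the case where $\beta_0(G)$ is free, and then rerun the free-group argument with precisely your dichotomy on $E$ (free subgroup versus finite-index surface subgroup), invoking Bergman's Theorem~\ref{intersection of retracts of free gp} or Proposition~\ref{equalizer of endomorphisms of surface group} accordingly. Two trivial slips that do not affect the argument: surjective endomorphisms being automorphisms is \emph{Hopficity}, not co-Hopficity, and it is the restrictions $\beta_0\gamma|_{\beta_0(G)}$ (not $\gamma|_{\beta_0(G)}$, whose image need not lie in $\beta_0(G)$) that form the family of injective endomorphisms of $\beta_0(G)$ to which Theorem~\ref{inj fixed subgp of free gp inert} is applied.
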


\begin{proof}
If $\chi(G)\geqslant 0$ then Proposition~\ref{inertia small} gives us inertia of $\fix \B$
and so, the inequality $\rk (K\cap \fix\B)\leqslant \rk(K)$ holds for every $K\leqslant G$
without conditions. Let us assume then $\chi(G)<0$.

As in the proof of Theorem~\ref{main free}, we may assume that $\B$ contains the identity
and it is closed under composition. If $\B$ consists of epimorphisms, then $\B\subseteq
\aut(G)$, Theorem~\ref{WZ thm}(iii) tells us that $\fix \B$ is inert in $G$, and we are
done.

So, let us assume that $\B$ contains at least one non-epimorphic endomorphism; by
Lemma~\ref{subgp of surface gp}(ii), its image will be a free group. As above, choose
$\beta_0\in \B$ such that $\beta_0(G)$ is a free group of minimal rank. All elements of
$\B$ must then act injectively on $\beta_0(G)$. As above, we consider the subset
$\beta_0\B =\{\beta_0\gamma \mid \gamma\in \B\} \subseteq \B$, their restrictions to
$\beta_0(G)$ give a family $\beta_0\B|_{\beta_0(G)}$ of injective endomorphisms of the
free group $\beta_0(G)$, namely $\beta_0\gamma|_{\beta_0(G)} \colon \beta_0(G)\to
\beta_0(G)$, and, by Theorem~\ref{inj fixed subgp of free gp inert}, $\fix(\beta_0 \B
)=\fix(\beta_0 \B |_{\beta_0(G)})$ is inert in $\beta_0(G)$ i.e., for every $L\leqslant
\beta_0(G)$, we have
\begin{equation}\label{eq.2}
\rk(L\cap \fix(\beta_0\B))\leqslant\rk(L).
\end{equation}

Now let $K\leqslant G$ be a subgroup satisfying $\beta_0 (K)\cap \fix\B \leqslant K$; we
have to show that $\rk (K\cap \fix\B)\leqslant \rk(K)$. As above, we consider
$E=\beta_0^{-1}(\beta_0(K)\cap \fix(\beta_0 \B))\leqslant G$, $\beta_0$ restricts to an
epimorphism $\beta_0|_E \colon E\twoheadrightarrow \beta_0(K)\cap \fix(\beta_0\B)$ whose
image is free and finitely generated, and every $\gamma\in \B$ restricts to a section of
$\beta_0|_E$, namely
 $$
\gamma|_{\beta_0(K)\cap \fix(\beta_0 \B)} \colon \beta_0(K)\cap \fix(\beta_0 \B)\to E.
 $$
Now, let us distinguish whether $E\leqslant G$ is a free group (like in the proof of
Theorem~\ref{main free}) or a surface group. In the first case, Theorem~\ref{intersection of
retracts of free gp} tells us that $\eq(\B|_{\beta_0(K)\cap \fix(\beta_0\B)})$ is a free
factor of $\beta_0(K)\cap \fix(\beta_0\B)$ and so,
 $$
\rk(\eq(\B|_{\beta_0(K)\cap \fix(\beta_0\B)}))\leqslant \rk(\beta_0(K)\cap \fix(
\beta_0\B)).
 $$
Otherwise, Proposition~\ref{equalizer of endomorphisms of surface group} gives us this
inequality directly.

Finally, the exact same argument as above shows that $\eq(\B|_{\beta_0(K)\cap
\fix(\beta_0\B)}) = K\cap \fix \B$, using here our assumption on $K$. Hence, by
equation~(\ref{eq.2}) applied to $L=\beta_0(K)$, we conclude
 $$
\rk(K\cap \fix \B )\leqslant \rk(\beta_0(K)\cap \fix(\beta_0\B))\leqslant \rk(\beta_0(K))
\leqslant \rk(K),
 $$
completing the proof.
\end{proof}

As a first corollary, we get compression of fixed subgroups of arbitrary families of
endomorphisms, the analogous result to Theorem~\ref{compression of fixed subgp in free gp}
for surface groups.

\begin{cor}\label{compressed surface}
Let $G$ be a surface group, and let $\B\subseteq \edo(G)$. Then, $\fix \B$ is compressed in
$G$.
\end{cor}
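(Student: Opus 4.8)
The plan is to derive compression of $\fix\B$ in $G$ as an immediate consequence of Theorem~\ref{main surface}, exactly as was done for free groups after Theorem~\ref{main free}. Recall that to prove compression we must show $\rk(\fix\B)\leqslant \rk(K)$ for every subgroup $K$ with $\fix\B\leqslant K\leqslant G$. The key observation is that any such $K$ automatically satisfies the hypothesis $\beta_0(K)\cap\fix\B\leqslant K$ appearing in Theorem~\ref{main surface}, where $\beta_0\in\langle\B\rangle$ is chosen with $\beta_0(G)$ of minimal rank. Indeed, since $\fix\B\leqslant K$ we have $\beta_0(K)\cap\fix\B\leqslant\fix\B\leqslant K$, so the condition holds trivially.

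First I would fix $\beta_0\in\langle\B\rangle$ realizing the minimal rank of the image, as in the statement of Theorem~\ref{main surface}. Then, for an arbitrary $K$ with $\fix\B\leqslant K\leqslant G$, I would note that $K\cap\fix\B=\fix\B$ because $\fix\B\leqslant K$. Applying Theorem~\ref{main surface} to this $K$ (whose hypothesis is satisfied by the observation above) yields
 $$
\rk(\fix\B)=\rk(K\cap\fix\B)\leqslant\rk(K).
 $$
Since $K$ was an arbitrary intermediate subgroup, this is precisely the statement that $\fix\B$ is compressed in $G$.

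There is essentially no main obstacle here: the corollary is a one-line deduction from Theorem~\ref{main surface}, and the only thing to verify is that the side condition on $K$ is vacuous in the compression setting, which follows from $\fix\B\leqslant K$. The work has already been absorbed into the proof of Theorem~\ref{main surface}, which handles both the exceptional surface groups (via Proposition~\ref{inertia small}) and those of negative Euler characteristic (via the inertia and equalizer results of Wu--Zhang and the free-group machinery). Thus this corollary requires no case distinction of its own and inherits the full generality over all compact surfaces.
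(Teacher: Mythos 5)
Your proposal is correct and coincides with the paper's own proof: both derive the corollary directly from Theorem~\ref{main surface} by observing that $\fix\B\leqslant K\leqslant G$ forces $\beta_0(K)\cap\fix\B\leqslant\fix\B\leqslant K$, so the side condition is vacuous and $\rk(\fix\B)=\rk(K\cap\fix\B)\leqslant\rk(K)$. No gaps; the verification that the hypothesis on $K$ holds trivially is exactly the one-line observation the authors make.
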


\begin{proof}
This is a direct consequence of Theorem~\ref{main surface}, after observing that $\fix\B
\leqslant K\leqslant G$ trivially implies $\beta_0 (K)\cap \fix\B \leqslant K$.
\end{proof}

Finally, we can also get the corresponding partial positive solution to
Conjecture~\ref{conj surfaces}. We hope this helps to its full resolution in the future.

\begin{cor}\label{cor surface}
Let $G$ be a surface group, let $\B\subseteq \edo(G)$ be an arbitrary family of
endomorphisms, let $\langle \B\rangle \leqslant \edo(G)$ be the submonoid generated by
$\B$, and let $\beta_0 \in \langle \B\rangle$ with image $\beta_0(G)$ of minimal rank.
Then, $\fix \B$ is inert in $\beta_0(G)$. Moreover, if $\beta_0(G)$ is inert in $G$ then
$\fix \B$ is inert in $G$ as well.
\end{cor}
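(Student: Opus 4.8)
The plan is to mirror the proof of Corollary~\ref{cor free}, replacing the single injectivity observation by a short case analysis suited to surface groups. The first statement asserts inertia of $\fix\B$ inside $\beta_0(G)$, so I would begin by checking that $\fix\B$ is indeed contained in $\beta_0(G)$: since $\fix\B=\fix\langle\B\rangle$ and $\beta_0\in\langle\B\rangle$, every $x\in\fix\B$ satisfies $x=\beta_0(x)\in\beta_0(G)$. With this containment in hand, inertia of $\fix\B$ in $\beta_0(G)$ amounts to the inequality $\rk(K\cap\fix\B)\leqslant\rk(K)$ for every $K\leqslant\beta_0(G)$, and Theorem~\ref{main surface} delivers exactly this, provided each such $K$ satisfies the hypothesis $\beta_0(K)\cap\fix\B\leqslant K$.

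The heart of the argument is verifying that hypothesis, which reduces to showing that $\beta_0$ acts injectively on $\beta_0(G)$. Indeed, take $x\in\beta_0(K)\cap\fix\B$, so that $x=\beta_0(k)$ for some $k\in K\leqslant\beta_0(G)$ and, as above, $x=\beta_0(x)$; thus $\beta_0(k)=\beta_0(x)$ with both $k,x\in\beta_0(G)$, and injectivity of $\beta_0$ on $\beta_0(G)$ forces $k=x\in K$. This is where surface groups require a little more care than free groups, and I would split into the two cases already used in the proof of Theorem~\ref{main surface}. If $\B\subseteq\aut(G)$, then $\beta_0\in\aut(G)$ is globally injective and $\beta_0(G)=G$, so injectivity on $\beta_0(G)$ is immediate. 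Otherwise $\B$ contains a non-epimorphic endomorphism, and by Lemma~\ref{subgp of surface gp}(ii) the minimal-rank image $\beta_0(G)$ is free; the minimality of $\rk(\beta_0(G))$ then forces every element of $\B$, and in particular $\beta_0$, to act injectively on $\beta_0(G)$, exactly as in the free-group setting.

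Having established the hypothesis for all $K\leqslant\beta_0(G)$, Theorem~\ref{main surface} yields the first statement. For the ``moreover'' part I would simply invoke transitivity of the inertia property: if $\fix\B$ is inert in $\beta_0(G)$ and $\beta_0(G)$ is inert in $G$, then for any $K\leqslant G$ one has $K\cap\fix\B=(K\cap\beta_0(G))\cap\fix\B$, whence $\rk(K\cap\fix\B)\leqslant\rk(K\cap\beta_0(G))\leqslant\rk(K)$, giving inertia of $\fix\B$ in $G$. I expect no serious obstacle here; the only genuine point of attention is the case split establishing injectivity of $\beta_0$ on $\beta_0(G)$. It is worth noting that in the automorphism case $\beta_0(G)=G$, so the ``moreover'' hypothesis is automatic and merely recovers Theorem~\ref{WZ thm}(iii), whereas in the non-epimorphic case $\beta_0(G)$ is a proper free subgroup and its inertia in $G$ is a genuine extra assumption.
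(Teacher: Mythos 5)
Your overall route is exactly the paper's: the paper proves this corollary by saying verbatim that ``the exact same argument as in Corollary~\ref{cor free} works here'', i.e.\ one checks that every $K\leqslant \beta_0(G)$ satisfies the hypothesis $\beta_0(K)\cap\fix\B\leqslant K$ of Theorem~\ref{main surface} via injectivity of $\beta_0$ on $\beta_0(G)$ (coming from the minimality of $\rk(\beta_0(G))$), and then invokes transitivity of inertia. Your preliminary observation that $\fix\B\leqslant\beta_0(G)$, your verification of the hypothesis, and your transitivity computation $\rk(K\cap\fix\B)\leqslant\rk(K\cap\beta_0(G))\leqslant\rk(K)$ are all correct and match the intended argument.

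There is, however, one genuine (if small) gap: your case split establishing injectivity of $\beta_0$ on $\beta_0(G)$ is not exhaustive for all surface groups as the corollary states them. Lemma~\ref{subgp of surface gp} carries the standing hypothesis $\chi(G)<0$, so your second case --- ``$\B$ contains a non-epimorphic endomorphism, hence $\beta_0(G)$ is free by Lemma~\ref{subgp of surface gp}(ii)'' --- is not available for the exceptional surfaces $S_0$, $S_1=\Z^2$, $NS_1$, $NS_2$. Indeed the dichotomy itself fails there: for the Klein bottle, $a\mapsto a^3$, $b\mapsto b$ is a non-epimorphic endomorphism of $NS_2$ whose image is isomorphic to $NS_2$, not free (and for $\Z^2$, $x\mapsto x^2$ has non-free image), so freeness of $\beta_0(G)$ cannot be invoked, and your Hopficity-of-free-groups argument for injectivity does not apply. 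The repair is the same one the paper builds into the proof of Theorem~\ref{main surface}: dispose of $\chi(G)\geqslant 0$ first via Proposition~\ref{inertia small}, which gives that $\fix\B$ is inert in $G$ outright; since you have already shown $\fix\B\leqslant\beta_0(G)$, restricting to subgroups $K\leqslant\beta_0(G)$ yields inertia of $\fix\B$ in $\beta_0(G)$ with no injectivity discussion needed. Your two cases, exactly as you wrote them, then correctly handle the remaining situation $\chi(G)<0$, where Hopficity (Lemma~\ref{subgp of surface gp}(iii)) guarantees that ``not contained in $\aut(G)$'' means ``contains a non-epimorphic endomorphism''. With that three-way split your proof is complete and coincides with the paper's.
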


\begin{proof}
The exact same argument as in Corollary~\ref{cor free} works here.
\end{proof}

\begin{cor}\label{cor surface2}
Let $\B\subseteq\edo(NS_3)$ be an arbitrary family of endomorphisms of the surface group
$NS_3$. Then, $\fix\B$ is inert in $NS_3$.
\end{cor}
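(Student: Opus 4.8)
The plan is to deduce this from Corollary~\ref{cor surface} by verifying its hypothesis for the specific group $NS_3$: namely, that the minimal-rank image $\beta_0(NS_3)$ arising from any family $\B$ is automatically inert in $NS_3$. By Corollary~\ref{cor surface}, $\fix\B$ is always inert in $\beta_0(NS_3)$, so the whole problem reduces to showing that $\beta_0(NS_3)$ is inert in $NS_3$, after which transitivity of inertia finishes the argument.

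First I would dispose of the easy cases according to the nature of $\langle\B\rangle$. If every element of $\langle\B\rangle$ is an epimorphism then $\B\subseteq\aut(NS_3)$ and Theorem~\ref{WZ thm}(iii) already gives inertia of $\fix\B$ in $NS_3$ directly (note $\chi(NS_3)=2-3=-1<0$, so the negative Euler characteristic hypothesis holds). Otherwise $\langle\B\rangle$ contains a non-epimorphic endomorphism, and I would take $\beta_0$ realizing the minimal image rank; by Lemma~\ref{subgp of surface gp}(ii) its image $\beta_0(NS_3)$ is then a free group of rank $\rk(\beta_0(NS_3))\leqslant\frac{1}{2}\rk(NS_3)=\frac{3}{2}$, hence of rank $1$ or $0$.

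The key observation is therefore that the minimal image is necessarily a free group of rank at most $1$, i.e. either trivial or infinite cyclic. A trivial image makes $\fix\B$ inert vacuously, so the essential case is $\beta_0(NS_3)\simeq\mathbb{Z}$. Now I would invoke the structure of subgroups of surface groups: a cyclic (hence rank-one) subgroup $C\leqslant NS_3$ is inert in $NS_3$, because for any $K\leqslant NS_3$ the intersection $K\cap C$ is a subgroup of $C\simeq\mathbb{Z}$, so it is cyclic and satisfies $\rk(K\cap C)\leqslant 1\leqslant\rk(K)$ whenever $K$ is nontrivial, while if $K\cap C$ is trivial the inequality is immediate. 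Thus every rank-one subgroup of $NS_3$ is inert, and in particular $\beta_0(NS_3)$ is inert in $NS_3$. Applying the ``moreover'' clause of Corollary~\ref{cor surface} then yields that $\fix\B$ is inert in $NS_3$.

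The main obstacle, and the only genuine input beyond bookkeeping, is the rank bound $\rk(\beta_0(NS_3))\leqslant\frac{1}{2}\rk(NS_3)$ forcing the minimal image into rank $\leqslant 1$; this is exactly what makes $NS_3$ special, since $\frac{1}{2}\rk(NS_3)<2$ means the minimal image can never have rank $2$ (where inertia of the image would require the full Neumann-type machinery of Corollary~\ref{cor free 2}). For larger surface groups the minimal image could have rank $2$ or more, and inertia of such a free subgroup inside a surface group is not known; it is precisely the smallness of $\rk(NS_3)=3$ that lets the argument go through unconditionally. I would want to double-check only that the degenerate subcase $\beta_0(NS_3)=1$ is handled (it gives $\fix\B\leqslant\fix\beta_0=\beta_0^{-1}(1)\cap\cdots$, but more simply $\fix\B$ is then a subgroup on which all of $\B$ acts as identity while $\beta_0$ kills everything, forcing $\fix\B=1$, which is trivially inert).
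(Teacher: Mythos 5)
Your proposal is correct and follows essentially the same route as the paper: both split into the case $\B\subseteq\aut(NS_3)$ (settled by Theorem~\ref{WZ thm}(iii)) and the case of a non-epimorphic $\beta_0$, where Lemma~\ref{subgp of surface gp}(ii) forces $\rk(\beta_0(NS_3))\leqslant\lfloor\frac{3}{2}\rfloor=1$, so the image is cyclic. The only (harmless) difference is at the very end: the paper concludes directly by observing that $\fix\B\leqslant\beta_0(NS_3)$ is itself cyclic and hence inert in $NS_3$, whereas you route through the ``moreover'' clause of Corollary~\ref{cor surface} together with inertia of cyclic subgroups and transitivity --- slightly more machinery, but equally valid.
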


\begin{proof}
If $\B\subseteq\aut(NS_3)$, then $\fix\B$ is inert in $NS_3$ according to Theorem \ref{WZ
thm}(iii). Otherwise, Lemma~\ref{subgp of surface gp}(iii) and (ii) imply that some
$\beta_0\in \B$ has image being free of minimal rank, and $\rk(\beta_0(NS_3))\leqslant
\lfloor \frac{1}{2}\rk(NS_3)\rfloor =1$. Then, $\fix \B \leqslant \beta_0(NS_3)$ is also
cyclic and so, inert in $NS_3$.
\end{proof}

\begin{rem}
I. Mineyev claimed (but did not explicitly prove) that Hanna Neumann conjecture also
holds for surface groups (see~\cite[Section 8]{M1}). If this were correct, then every
rank two subgroup $A$ of a surface group $G$ would be inert in $G$, and the argument
above would also work for ambient surface groups of rank up to five: any family of
endomorphisms $\B\subseteq \edo(G)$ of a surface group $G$ with $\rk(G)\leqslant 5$
would satisfy that $\fix\B$ is inert in $G$ (i.e., Corollary~\ref{cor surface2} would
also be valid for $S_2$, $NS_4$ and $NS_5$). In fact, the case $\B\subseteq \aut(G)$ is
covered by Theorem~\ref{WZ thm}(iii); and otherwise, there is some $\beta_0\in \B$ with
free image of rank $\rk(\beta_0(G))\leqslant \lfloor \frac{1}{2}\rk(G)\rfloor =2$ and
so, by Corollary~\ref{cor surface}, $\fix\B$ is inert in $\beta_0(G)$, which is inert in
$G$.
\end{rem}

\section{Fixed points in direct products}\label{products}

In the previous sections we considered, separately, finitely generated free groups, and
surface groups. Now, we shall study direct products of finitely many such groups (with
possible repetitions) i.e., groups of the form $G=G_1\times \cdots \times G_n$, where
$n\geqslant 1$ and each $G_i$ is either a finitely generated free group $F_r$, $r\geqslant
1$, or an orientable surface group $S_g$, $g\geqslant 1$, or a non-orientable surface group
$NS_k$, $k\geqslant 1$. For short (and only within the scope of the present preprint), we
shall call such $G$ a \emph{product group}. We shall consider automorphisms and
endomorphisms of product groups and shall analyze the properties of their fixed subgroups.

We need to begin with some basic algebraic properties of product groups. First note that,
for arbitrary groups $A$ and $B$, $\rk(A\times B)\leqslant \rk(A)+\rk(B)$. Sometimes this
inequality is strict (as illustrated with the well-known fact that a direct product of two
finite cyclic groups of coprime orders is again cyclic), but in the case of product groups
it is always an equality.

\begin{lem}\label{ranks}
Let $G=G_1\times \cdots \times G_n$, where each $G_i$ is either a finitely generated
free group or a surface group. Then, $\rk(G)=\rk(G_1)+\cdots +\rk(G_n)$.
\end{lem}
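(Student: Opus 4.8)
The plan is to prove the two inequalities separately. The upper bound $\rk(G)\leqslant \rk(G_1)+\cdots+\rk(G_n)$ is already recorded just before the statement (iterating the general fact $\rk(A\times B)\leqslant \rk(A)+\rk(B)$), so all the work lies in the reverse inequality $\rk(G)\geqslant \sum_i \rk(G_i)$. For this I would pass to the abelianization: since $G^{ab}=G/[G,G]$ is a quotient of $G$, the image of any generating set of $G$ generates $G^{ab}$, and hence $\rk(G)\geqslant \rk(G^{ab})$. It therefore suffices to prove the equality $\rk(G^{ab})=\sum_i\rk(G_i)$, i.e.\ that for these particular groups abelianization loses no generators.

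Next I would compute $G^{ab}$ explicitly, using $G^{ab}\cong G_1^{ab}\oplus\cdots\oplus G_n^{ab}$ and the standard presentations. One has $F_r^{ab}\cong\Z^r$; the relator $[a_1,b_1]\cdots[a_g,b_g]$ of $S_g$ is a product of commutators and so vanishes in the abelianization, giving $S_g^{ab}\cong\Z^{2g}$; and the relator $a_1^2\cdots a_k^2$ of $NS_k$ abelianizes to $2(a_1+\cdots+a_k)$, giving $NS_k^{ab}\cong\Z^{k-1}\oplus\Z/2\Z$. Writing $t$ for the number of non-orientable factors $NS_k$ among the $G_i$, we thus obtain $G^{ab}\cong\Z^m\oplus(\Z/2\Z)^t$, where $m$ is the total torsion-free rank, namely $m=\sum_{G_i=F_r}r+\sum_{G_i=S_g}2g+\sum_{G_i=NS_k}(k-1)$.

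Finally I would determine the minimal number of generators of $A:=\Z^m\oplus(\Z/2\Z)^t$. The cleanest route, avoiding invariant-factor bookkeeping, is to reduce modulo $2$: since $2A=2\Z^m\oplus 0$, we get $A/2A\cong(\Z/2\Z)^m\oplus(\Z/2\Z)^t=(\Z/2\Z)^{m+t}$, an $\mathbb{F}_2$-vector space of dimension $m+t$. Any generating set of $A$ descends to a spanning set of this vector space, so $\rk(A)\geqslant m+t$; conversely $A$ is visibly generated by $m+t$ elements, whence $\rk(A)=m+t$. It then remains to verify the bookkeeping identity $m+t=\sum_i\rk(G_i)$: each non-orientable factor $NS_k$ contributes $k-1$ to $m$ and $1$ to $t$, for a total of $k=\rk(NS_k)$, while each $F_r$ contributes $r=\rk(F_r)$ and each $S_g$ contributes $2g=\rk(S_g)$, with no contribution to $t$. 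Summing over all factors gives $m+t=\sum_i\rk(G_i)$, and hence the chain $\sum_i\rk(G_i)=\rk(G^{ab})\leqslant\rk(G)\leqslant\sum_i\rk(G_i)$ forces equality throughout.

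There is no serious obstacle in this argument; the only point demanding a little care is that the $\Z/2\Z$ summands arising from the non-orientable factors must be counted as genuinely extra generators and cannot be absorbed into the free part $\Z^m$. This is precisely what the mod-$2$ reduction makes transparent, since $\dim_{\mathbb{F}_2}(A/2A)=m+t$ records both the free and the torsion contributions simultaneously.
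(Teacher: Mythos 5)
Your proof is correct and takes essentially the same route as the paper's: both pass to the abelianization, use $\rk(G)\geqslant \rk(G^{\mathrm{ab}})$, and exploit the fact that each $G_i^{\mathrm{ab}}$ is free abelian or free abelian plus a single $\mathbb{Z}/2\mathbb{Z}$ summand, so that ranks add. The only difference is that you justify the additivity step $\rk(G_1^{\mathrm{ab}}\times\cdots\times G_n^{\mathrm{ab}})=\rk(G_1^{\mathrm{ab}})+\cdots+\rk(G_n^{\mathrm{ab}})$ explicitly via the mod-$2$ dimension count $\dim_{\mathbb{F}_2}(A/2A)=m+t$, a detail the paper asserts without proof, which is a welcome clarification rather than a genuinely different argument.
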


\begin{proof}
By the form of the $G_i$'s, the torsion part of $G_i^{\rm \, ab}$ is either trivial or
$\mathbb{Z}/2\mathbb{Z}$; furthermore, $\rk(G_i^{\rm \, ab})=\rk(G_i)$. Then,
 $$
\begin{array}{rcl}
\rk(G) \geqslant \rk(G^{\rm \, ab}) & = & \rk(G_1^{\rm \, ab}\times \cdots \times
G_n^{\rm \, ab}) \\ & = & \rk(G_1^{\rm \, ab})+\cdots +\rk(G_n^{\rm \, ab}) \\ & = & \rk(G_1)+\cdots
+\rk(G_n) \\ & \geqslant & \rk(G_1\times \cdots \times G_n) \\ & = & \rk(G),
\end{array}
 $$
and the two inequalities are equalities. Hence, $\rk(G)=\rk(G_1)+\cdots +\rk(G_n)$.
\end{proof}

Note that the center of a direct product of groups, $A\times B$, is the product of the
corresponding centers, $Z(A\times B)=Z(A)\times Z(B)$. Also, the centralizer of an element
$(a,b)\in A\times B$ is clearly the direct product of centralizers of its respectives
components, $\cen_{A\times B}(a,b)=\cen_A(a)\times \cen_B(b)$; in particular, $\cen_{A\times
B}(a,1)=\cen_A(a)\times B$ and $\cen_{A\times B}(1,b)=A\times \cen_B(b)$.

Note also that, among our building blocks (namely, $F_r$ for $r\geqslant 1$, $S_g$ for
$g\geqslant 1$, and $NS_k$ for $k\geqslant 1$) the only abelian ones are $F_1
=\mathbb{Z}$, $S_1=\mathbb{Z}^2$ and $NS_1=\mathbb{Z}/2\mathbb{Z}$. Also, they all have
trivial center and infinite cyclic centralizers for non-trivial elements, except for
$F_1$, $S_1$, $NS_1$ and $NS_2$ (see Lemma~\ref{centr} and Remark~\ref{centr exc}).
Using this, we can easily deduce how is the center and the centralizer of an arbitrary
element in a product group. The following lemma will be crucial in the forthcoming
arguments.

\begin{lem}\label{center-centr}
Let $G=G_1\times \cdots \times G_n$, $n\geqslant 1$, be a product group, where each $G_i$ is
a free or a surface group. Then,
 $$
\begin{array}{rcl}
Z(G)=1 & \Longleftrightarrow & \text{each } G_i \text{ is free non-abelian, or a surface
group with } \chi(G_i)<0 \\ & \Longleftrightarrow & \forall i=1,\ldots ,n,\,\, G_i\simeq
F_r \text{ with } r\geqslant 2, \text{ or } G_i\simeq S_g \text{ with } g\geqslant 2,
\text{ or } \\ & & G_i\simeq NS_k \text{ with } k\geqslant 3. \end{array}
 $$
Furthermore, in the case $Z(G)=1$, every element $(g_1,\ldots ,g_n)\in G$ satisfies
$\cen_G(g_1,\ldots ,g_n)\simeq \widehat{G_1}\times \cdots \times \widehat{G_n}$, where
$\widehat{G_i}$ is $G_i$ if $g_i=1$, or $\mathbb{Z}$ if $g_i\neq 1$.
\end{lem}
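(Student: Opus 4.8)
The plan is to reduce the statement to two facts already recorded for the individual building blocks---the center and the centralizers of non-trivial elements of each $G_i$ (Lemma~\ref{centr} and Remark~\ref{centr exc})---combined with the two elementary observations stated just before the lemma, namely that for arbitrary groups $Z(A\times B)=Z(A)\times Z(B)$ and $\cen_{A\times B}(a,b)=\cen_A(a)\times \cen_B(b)$.

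First I would establish the equivalences. Iterating the product formula for centers gives $Z(G)=Z(G_1)\times \cdots \times Z(G_n)$, so $Z(G)=1$ if and only if $Z(G_i)=1$ for every $i$. It then suffices to record, factor by factor, which building blocks have trivial center. As noted just before the lemma, among the blocks $F_r$ ($r\geqslant 1$), $S_g$ ($g\geqslant 1$) and $NS_k$ ($k\geqslant 1$), the ones with non-trivial center are exactly $F_1=\Z$, $S_1=\Z^2$, $NS_1=\Z/2\Z$ and $NS_2$; all the others have trivial center. Discarding these four exceptions leaves precisely $F_r$ with $r\geqslant 2$, $S_g$ with $g\geqslant 2$ and $NS_k$ with $k\geqslant 3$, and using $\chi(S_g)=2-2g$ and $\chi(NS_k)=2-k$ one sees that this list coincides with ``free non-abelian, or a surface group with $\chi<0$''. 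This gives both displayed equivalences; the step is pure bookkeeping over a finite list.

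For the centralizer statement I would assume $Z(G)=1$, so by the above each factor $G_i$ is free non-abelian or a surface group with $\chi(G_i)<0$. Iterating the product formula for centralizers yields $\cen_G(g_1,\ldots,g_n)=\cen_{G_1}(g_1)\times \cdots \times \cen_{G_n}(g_n)$, and it remains to identify each factor. If $g_i=1$ then $\cen_{G_i}(g_i)=G_i$; if $g_i\neq 1$ then, since $G_i$ lies in the non-exceptional family, Lemma~\ref{centr} (for surface groups with $\chi<0$) and Remark~\ref{centr exc} (for $F_r$ with $r\geqslant 2$) give $\cen_{G_i}(g_i)\simeq \Z$. Substituting these into the product yields $\cen_G(g_1,\ldots,g_n)\simeq \widehat{G_1}\times \cdots \times \widehat{G_n}$ with $\widehat{G_i}$ equal to $G_i$ when $g_i=1$ and isomorphic to $\Z$ when $g_i\neq 1$, as claimed.

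I do not expect a genuine obstacle: the argument is a direct application of the product decompositions of centers and centralizers together with the already-established computations for the building blocks. The one point deserving care is to check that the hypothesis $Z(G)=1$ is exactly what forces each factor into the non-exceptional family, so that the infinite-cyclic centralizer of a non-trivial element is actually available for every $G_i$; without this hypothesis (for instance if some $G_i=NS_2$) the centralizer of a non-trivial element need not be $\Z$, and the clean formula would fail.
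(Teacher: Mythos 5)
Your proposal is correct and is precisely the argument the paper intends: the paper gives no separate proof of this lemma, asserting it as an immediate consequence of the product formulas $Z(A\times B)=Z(A)\times Z(B)$ and $\cen_{A\times B}(a,b)=\cen_A(a)\times \cen_B(b)$ stated just beforehand, together with the centralizer computations of Lemma~\ref{centr} and Remark~\ref{centr exc}, which is exactly how you proceed. Your closing caution --- that the hypothesis $Z(G)=1$ is what excludes the exceptional factors $F_1$, $S_1$, $NS_1$, $NS_2$ and thereby makes the infinite-cyclic centralizer available in every coordinate --- correctly identifies the only point where care is needed.
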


Lemma~\ref{center-centr} separates product groups $G=G_1\times \cdots \times G_n$ into
three different types, namely: (i) those for which $Z(G_i)\neq 1$ for all $i=1,\ldots
,n$, they will be called of \emph{euclidean} type, and are precisely the groups of the
form $G=NS_2^{\, \ell} \times \mathbb{Z}^p \times (\mathbb{Z}/2\mathbb{Z})^q$, for some
integers $\ell, p,q\geqslant 0$; (ii) those for which $Z(G_i)=1$ for all $i=1,\ldots
,n$, they will be called of \emph{hyperbolic} type and are the finite direct products of
$F_r$'s with $r\geqslant 2$, $S_g$'s with $g\geqslant 2$, and $NS_k$'s with $k\geqslant
3$; and (iii) those mixing the two behaviours. With this language,
Lemma~\ref{center-centr} says that $Z(G)=1$ if and only if $G$ is of hyperbolic type.
Euclidean and hyperbolic product groups will play an important role in the subsequent
arguments.

For product groups the global group determines the number of components and the
components themselves (except for the case of $\mathbb{Z}^2$ being isomorphic to
$\mathbb{Z}\times \mathbb{Z}$); see the next proposition for details. This contrasts
with the general situation where $\Z\times A \simeq \Z\times B$ does not imply $A\simeq
B$, see~\cite{Hir}.

\begin{prop}\label{isom}
Let $G=G_1\times \cdots \times G_n$ and $H=H_1\times \cdots \times H_m$, $n,m\geqslant
1$, be two product groups of hyperbolic type (where each $G_i$ and $H_j$ is a
non-abelian free group or a surface group with negative Euler characteristic). Then,
$G\simeq H$ if and only if $n=m$ and $G_i\simeq H_i$ up to reordering.
\end{prop}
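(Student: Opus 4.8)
The forward direction is immediate, since permuting the factors of a direct product produces an isomorphic group; so $n=m$ together with $G_i\simeq H_i$ (after reordering) gives $G\simeq H$. All the content is in the converse, which I would isolate as a uniqueness statement: \emph{a group with trivial center has at most one decomposition as a direct product of directly indecomposable subgroups, up to reordering and isomorphism of the factors.} Fixing an isomorphism $G\simeq H$ and pulling the $H_j$ back through it, I would regard a single group $\Gamma$ as equipped with two direct decompositions $\Gamma=A_1\times\cdots\times A_n=B_1\times\cdots\times B_m$, with $A_i\simeq G_i$ and $B_j\simeq H_j$; it then suffices to build a bijection $\sigma$ of the index sets with $A_i\simeq B_{\sigma(i)}$.

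The first step is to record that every factor is directly indecomposable with trivial center. Since $\Gamma$ is of hyperbolic type, each $L\in\{A_i,B_j\}$ is a free group of rank $\geqslant 2$ or a surface group with $\chi(L)<0$, so by Lemma~\ref{centr} and Remark~\ref{centr exc} it has trivial center and all centralizers of non-trivial elements infinite cyclic. Hence $L$ is directly indecomposable: a splitting $L=P\times Q$ with $P,Q\neq1$ would force $Q\leqslant\cen_L(p)\simeq\Z$ for any $1\neq p\in P$, and symmetrically $P\simeq\Z$, giving $L\simeq\Z^2$ and contradicting the triviality of $Z(L)$. In particular $Z(\Gamma)=1$ (Lemma~\ref{center-centr}).

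Next I would match the factors using the projection homomorphisms $p_i\colon\Gamma\twoheadrightarrow A_i$ and $q_j\colon\Gamma\twoheadrightarrow B_j$ (whose kernels are the complementary factors). For fixed $i$ the subgroups $p_i(B_1),\ldots,p_i(B_m)$ pairwise commute, because $p_i([B_j,B_k])=1$ for $j\neq k$, and they generate $A_i=p_i(\Gamma)$. Here the triviality of $Z(A_i)$ is decisive: any element lying in $p_i(B_j)$ and in the subgroup generated by the remaining $p_i(B_k)$ would centralize all of $A_i$, hence be trivial, so $A_i=p_i(B_1)\times\cdots\times p_i(B_m)$ is an \emph{honest} internal direct product. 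Direct indecomposability of $A_i$ now forces exactly one term to survive: there is a unique index $\sigma(i)$ with $p_i(B_{\sigma(i)})=A_i$ and $p_i(B_j)=1$ for $j\neq\sigma(i)$. The symmetric argument gives, for each $j$, a unique $\tau(j)$ with $q_j(A_{\tau(j)})=B_j$.

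Finally I would close the loop with the idempotent bookkeeping that the trivial center makes available, and this is where the real care is needed, since these groups satisfy no chain condition on normal subgroups and the classical Krull--Remak--Schmidt theorem cannot simply be quoted. Writing $e_i,f_j\colon\Gamma\to\Gamma$ for the idempotent endomorphisms with images $A_i,B_j$, the reconstruction $g=\prod_j f_j(g)$ together with the vanishing $p_i(B_j)=1$ for $j\neq\sigma(i)$ yields $e_i=e_if_{\sigma(i)}$ as endomorphisms of $\Gamma$; restricting to $A_i$, where $e_i$ acts as the identity, this says precisely that $p_i\circ q_{\sigma(i)}$ is the identity on $A_i$. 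In particular $q_{\sigma(i)}(A_i)\neq1$, which forces $\tau(\sigma(i))=i$; the symmetric computation gives $q_{\sigma(i)}\circ p_i=\mathrm{id}$ on $B_{\sigma(i)}$ and $\sigma(\tau(j))=j$. Thus $\sigma$ is a bijection (so $n=m$), and $p_i$ and $q_{\sigma(i)}$ restrict to mutually inverse isomorphisms $B_{\sigma(i)}\simeq A_i$. Translating back through the chosen isomorphism $G\simeq H$ gives $G_i\simeq H_{\sigma(i)}$ up to reordering, as required.
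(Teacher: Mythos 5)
Your proof is correct, and it takes a genuinely different route from the paper's. The paper argues by induction on $n$, working with centralizers of concrete elements: for a standard generator $g$ of $G_n$ it computes $\cen_G(1,\ldots,1,g)\simeq G_1\times\cdots\times G_{n-1}\times\Z$ via Lemma~\ref{center-centr}, compares with the centralizer of the image, and takes centers on both sides to force $\Z\simeq\Z^k$, i.e.\ $k=1$ non-trivial coordinate; the fact that distinct standard generators of $G_n$ do not commute then pins all of $\phi(G_n)$ into a single coordinate, giving $G_n\simeq H_m$, and factoring the centers out of the matched centralizers sets up the induction. You instead prove a Krull--Remak--Schmidt-type uniqueness statement for centerless groups, using the centralizer lemma only twice and only qualitatively: to get trivial center and direct indecomposability of each factor. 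The key step --- that the commuting subgroups $p_i(B_1),\ldots,p_i(B_m)$ form an \emph{honest} internal direct decomposition of $A_i$ because any element of $p_i(B_j)\cap\langle p_i(B_k)\colon k\neq j\rangle$ centralizes $A_i$ --- is exactly where triviality of $Z(A_i)$ earns its keep, and your idempotent identity $e_i=e_if_{\sigma(i)}$ together with its mirror cleanly produces $\tau\sigma=\mathrm{id}$, $\sigma\tau=\mathrm{id}$, and the mutually inverse restrictions $p_i|_{B_{\sigma(i)}}$, $q_{\sigma(i)}|_{A_i}$ in one stroke, with no induction. I checked the details (the internal-product criterion, the indecomposability argument via $\cen_L(p)\simeq\Z$, and the bookkeeping with $\sigma$ and $\tau$) and they are sound; your remark that classical Krull--Remak--Schmidt cannot be quoted for lack of chain conditions is also apt. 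What each approach buys: yours is more conceptual and strictly more general (it establishes uniqueness of decompositions into indecomposables for arbitrary centerless groups), while the paper's coordinate-chasing version has the practical advantage that its inductive step is reused verbatim in Proposition~\ref{autos} to show that automorphisms of hyperbolic product groups are rectangular up to permutation --- a refinement your argument essentially contains in the mutually inverse projection restrictions, though you do not state it.
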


\begin{proof}
The ``if\," part is obvious.

For the convers, suppose $G\simeq H$ and, by symmetry, $n\leqslant m$. By
Lemma~\ref{center-centr}, $Z(G)=Z(H)=1$. Let us prove the implication by induction on
$n$.

For $n=1$, and looking at centralizers, we immediately deduce $m=1$ and $G_1 =G\simeq
H=H_1$.

So suppose $n\geqslant 2$, assume the implication is true for $n-1$, and let $\phi
\colon G\to H$ be an isomorphism. Let $1\neq g\in G_n$ be one of the standard generators
for $G_n$, and consider the element $(1,\ldots ,1,g)\in G$. By Lemma~\ref{center-centr},
$\cen_G(1,\ldots ,1,g)\simeq G_1\times \cdots \times G_{n-1}\times \mathbb{Z}$. And,
again by Lemma~\ref{center-centr}, $\cen_H(\phi(1,\ldots ,1,g))\simeq H_{j_1}\times
\cdots \times H_{j_{m-k}}\times \mathbb{Z}^k$, where $\{ j_1, \ldots , j_{m-k}\}$ are
the positions of the trivial coordinates in $\phi(1,\ldots ,1,g)$, and $k$ is its number
of non-trivial coordinates. Then, $G_1\times \cdots \times G_{n-1}\times
\mathbb{Z}\simeq \cen_G(1,\ldots ,1,g)\simeq \cen_H(\phi(1,\ldots ,1,g))\simeq
H_{j_1}\times \cdots \times H_{j_{m-k}}\times \mathbb{Z}^k$. From here, taking centers,
we get $k=1$; this means that $\phi(1,\ldots,1,g)$ has only one non-trivial coordinate,
say at position $j$. In principle $j$ depends on $g$, but a straightforward argument
shows it does not (any pair of different generators $g,g'$ from the standard
presentation for $G_n$ do not commute to themselves and so, $\phi(1,\ldots,1,g)$ and
$\phi(1,\ldots,1,g')$ do not commute either; this implies they both have their unique
non-trivial coordinate at the same position $j$). Up to reordering the coordinates of
$H$ if necessary, this means that $\phi$ restricts to an injective morphism $\phi|_{G_n}
\colon 1\times \cdots \times 1\times G_n \to 1\times \cdots \times 1\times H_m$. And
global surjectivity of $\phi$ together with an argument as above applied to $\phi^{-1}$
tells us that $\phi|_{G_n}$ is surjective. Hence, $G_n\simeq H_m$.

Finally, factoring out the centers in the above isomorphism between centralizers, we get
$G_1\times \cdots \times G_{n-1} \simeq H_1\times \cdots \times H_{m-1}$. By the
inductive hypothesis, $n-1=m-1$ (so, $n=m$) and, again up to reordering, $G_i\simeq H_i$
for all $i=1,\ldots ,n-1$.
\end{proof}

With similar arguments we can describe \emph{all} automorphisms of a product group $G$
with trivial center: up to permuting the coordinates corresponding to isomorphic
$G_i$'s, all automorphisms of $G$ will be rectangular i.e., a product of individual
automorphisms of the coordinates. After introducing the necessary notation, we establish
this fact formally in the following proposition.

For a product group $G=G_1\times \cdots \times G_n$, we can collect together the coordinates
corresponding to isomorphic $G_i$'s and present it in the form $G=G_1^{\,n_1}\times \cdots
\times G_m^{\,n_m}$, where $n_i\geqslant 1$ and $G_i\not\simeq G_j$ for different
$i,j=1,\ldots ,m$; of course, $n=n_1+\cdots +n_m$. When we need to distinguish between the
coordinates of $G_i^{\,n_i}$, we shall use the notation $G_i^{\,n_i}=G_{i,1}\times \cdots
\times G_{i,n_i}$, where $G_{i,j}=G_i$ for all $j=1,\ldots ,n_i$. Assuming a small risk of
confusion, we shall use both notations simultaneously (the meaning being clear from the
context at any time); we shall refer to them as the \emph{global} notation and the
\emph{block} notation, respectively.

Given an automorphism of each coordinate, $\phi_i\in \aut(G_i)$ for $i=1,\ldots ,n$,
their product $\phi =\prod_{i=1}^n \phi_i =\phi_1\times \cdots \times \phi_n \colon G\to
G$, $(g_1,\ldots ,g_n)\mapsto (\phi_1(g_1), \ldots ,\phi_n(g_n))$, is clearly an
automorphism of $G$; let us refer to such automorphisms of $G$ as the \emph{rectangular}
ones. On the other hand, given a permutation $\sigma \in S_{n_i}$ of the set of indices
$\{ 1,\ldots ,n_i \}$, the automorphism of $G_i^{\, n_i}$ defined by $(g_1,\ldots
,g_{n_i})\mapsto (g_{\sigma(1)},\ldots ,g_{\sigma(n_i)})$ extends to an automorphism of
$G$ by fixing the rest of coordinates; abusing notation, we shall denote both of them by
$\sigma$ (so, $\sigma \in S_{n_i}$, $\sigma \in \aut (G_i^{\, n_i})$, and $\sigma \in
\aut(G)$). Note that if $\sigma\in S_{n_i}$ and $\tau\in S_{n_j}$ with $i\neq j$, then
the corresponding automorphisms $\sigma,\, \tau \in \aut(G)$ act on supports with
trivial intersection and so they commute, $\sigma\tau=\tau\sigma$.

It is easy to see that some product groups admit non-rectangular automorphisms, even up
to permutation. For instance, take $G=F_2 \times \mathbb{Z}=\langle a,b\rangle \times
\langle c\rangle $ and $\phi \colon G\to G$, $a\mapsto ca$, $b\mapsto b$, $c\mapsto c$,
is such an example. However, note that for such a construction to be a well defined
automorphism, it is essential that $c$ is a central element in $G$ (in $F_2 \times F_2
=\langle a,b\rangle \times \langle c,d\rangle$, trying to send $a\mapsto ca$, $b\mapsto
b$, $c\mapsto c$, $d\mapsto d$ does not work because $c$ does not commute with $d$). The
following proposition states that the presence of non-trivial central elements in $G$
are necessary to have non-rectangular automorphisms i.e., if $G$ is of hyperbolic type
then every automorphism of $G$ is rectangular up to permutation.

\begin{prop}\label{autos}
Let $G=G_1^{\,n_1}\times \cdots \times G_m^{\,n_m}$ be a product group, where
$m\geqslant 1$, $n_i\geqslant 1$, $G_i\not\simeq G_j$ for $i\neq j$, and each $G_i$ is a
free group or a surface group. If $G$ is of hyperbolic type then, for every $\phi \in
\aut(G)$, there exist automorphisms $\phi_{i,j}\in \aut(G_i)$ and permutations $\sigma_i
\in S_{n_i}$, such that
 $$
\phi =\sigma_1 \circ\cdots \circ \sigma_m \circ (\prod_{i=1}^m \prod_{j=1}^{n_i}
\phi_{i,j}) = \prod_{i=1}^m (\sigma_i \circ 
\prod_{j=1}^{n_i} \phi_{i,j}).
 $$
\end{prop}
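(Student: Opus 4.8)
The plan is to exploit the centralizer description in Lemma~\ref{center-centr} to show that any $\phi\in\aut(G)$ must respect the internal structure of $G$ as a product, using the same centralizer-tracking strategy that drove the proof of Proposition~\ref{isom}. Since $G$ is of hyperbolic type, $Z(G)=1$, and for each standard generator $g$ of a factor $G_i$, the element $x$ with $g$ in one coordinate and $1$ elsewhere has centralizer $\cen_G(x)\simeq G_1\times\cdots\times\mathbb Z\times\cdots\times G_n$ (one $\mathbb Z$ in the $i$-th slot, full factors elsewhere). First I would argue, exactly as in Proposition~\ref{isom}, that $\phi(x)$ again has a single non-trivial coordinate: comparing $\cen_G(\phi(x))\simeq\cen_H(\ldots)$ and taking centers forces the number of non-trivial coordinates of $\phi(x)$ to be $1$. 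Thus $\phi$ sends each single-coordinate-supported generator to a single-coordinate-supported element.

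Next I would show this assignment of coordinates is consistent across a whole block, i.e. $\phi$ maps the sub-factor $G_{i,j}$ (viewed as $1\times\cdots\times G_i\times\cdots\times 1$) isomorphically onto some single sub-factor $G_{i',j'}$. The key is the non-commutation trick from Proposition~\ref{isom}: two distinct standard generators $g,g'$ of the same $G_i$ do not commute (this is where hyperbolic type, hence non-abelian factors, is used — it fails precisely for the abelian or Klein-bottle pieces), so $\phi(g),\phi(g')$ do not commute, which forces their unique non-trivial coordinates to sit in the \emph{same} position. Hence all generators of $G_{i,j}$ land in a common target coordinate, and $\phi$ restricts to an injective homomorphism $G_{i,j}\to G_{i',j'}$; running the same argument on $\phi^{-1}$ gives surjectivity, so this restriction is an isomorphism $G_i\simeq G_{i'}$. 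By the rigidity of Proposition~\ref{isom} (distinct $G_i,G_j$ are non-isomorphic), the target block index $i'$ must equal $i$, so $\phi$ permutes the coordinates \emph{within} each block $G_i^{\,n_i}$; this permutation is the sought $\sigma_i\in S_{n_i}$.

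Having fixed $\sigma=\sigma_1\circ\cdots\circ\sigma_m$, I would consider $\sigma^{-1}\phi$, which now maps each sub-factor $G_{i,j}$ into itself. It remains to check that $\sigma^{-1}\phi$ is genuinely rectangular, i.e. that it does not mix coordinates in a ``sheared'' way; the potential shear (as in the $F_2\times\mathbb Z$ example) is exactly what central elements would allow, and $Z(G)=1$ rules it out. Concretely, for $(1,\ldots,g_{i,j},\ldots,1)$ the image lies in the single factor $G_{i,j}$, so the restriction $\phi_{i,j}:=(\sigma^{-1}\phi)|_{G_{i,j}}\in\edo(G_i)$ is well defined; by the surjectivity argument above it is an automorphism, $\phi_{i,j}\in\aut(G_i)$. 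Since every element of $G$ is a product of such single-coordinate elements and $\sigma^{-1}\phi$ is a homomorphism respecting each coordinate, it acts as $\prod_{i,j}\phi_{i,j}$, giving $\phi=\sigma\circ\prod_{i=1}^m\prod_{j=1}^{n_i}\phi_{i,j}$, which regroups into $\prod_{i=1}^m(\sigma_i\circ\prod_{j=1}^{n_i}\phi_{i,j})$ because the $\sigma_i$ act on disjoint supports.

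The main obstacle I expect is the second step: cleanly proving that all generators of a single block-factor $G_{i,j}$ are carried to a \emph{common} target coordinate, and that $\phi$ restricted there is bijective rather than merely injective. The non-commutation argument handles pairs of generators, but one must ensure the conclusion is uniform over the whole generating set and independent of the chosen generator, and that the injective restriction is surjective — which requires feeding global surjectivity of $\phi$ back through the identical centralizer analysis applied to $\phi^{-1}$, just as in Proposition~\ref{isom}. Once that rigidity is secured, the passage from $\sigma^{-1}\phi$ to the honest rectangular form is a direct consequence of $Z(G)=1$ and is essentially bookkeeping.
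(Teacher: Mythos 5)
Your proposal is correct and follows essentially the same route as the paper's own proof, which simply invokes the inductive step of Proposition~\ref{isom} (centralizer analysis forcing single-coordinate images, the non-commutation trick to make the target coordinate uniform over the generators of each factor, and the $\phi^{-1}$ argument for surjectivity) and then observes that $\sigma$ must preserve blocks because $G_i\not\simeq G_j$ for $i\neq j$. The final step you flag as a worry --- passing from coordinate-preservation of $\sigma^{-1}\phi$ to the rectangular form --- is indeed just the bookkeeping you describe, since $G$ is generated by its coordinate subgroups.
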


\begin{proof}
Let $\phi \in \aut(G)$. The exact same argument as in the inductive step of the proof
of Proposition~\ref{isom} applied to each coordinate shows that, for every $i=1,\ldots
,n$, there exists $j=1,\ldots ,n$ such that $\phi$ maps bijectively elements of the
subgroup $G_i\leqslant G$ to elements of the subgroup $G_j\leqslant G$ (here we are
using the global notation). Furthermore, it is clear that $i\mapsto j$ defines a
permutation $\sigma$ of $\{ 1,\ldots ,n\}$. In other words, there exists a permutation
$\sigma\in S_n$ and automorphisms $\phi_i \in \aut(G_i)$ such that $\phi =\sigma \circ
(\prod_{i=1}^n \phi_i)$.

Finally, $\phi_i$ is an isomorphism from $G_i$ to $G_{\sigma(i)}$ so, $\sigma$ must preserve
the isomorphism blocks, say $\sigma=\sigma_1\circ \cdots \circ \sigma_m$ for some $\sigma_i
\in S_{n_i}$, $i=1,\ldots ,m$. This concludes the proof (the equality in the statement is
expressed in the block notation).
\end{proof}

Now we develop a technical result, and an interesting construction providing
automorphisms whose fixed subgroups have rank bigger than the ambient rank (so,
violating Bestvina--Handel inequality).

\begin{lem}
Let $A_i$, $i=1,\ldots ,n$, be a family of groups such that every $H\leqslant A_i$
satisfies $\rk(H)\leqslant \rk(A_i)$. Then, $\rk(H)\leqslant \rk(A_1)+\cdots +\rk(A_n)$
holds for every $H\leqslant A_1\times \cdots \times A_n$.
\end{lem}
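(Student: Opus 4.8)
The plan is to argue by induction on $n$, peeling off one factor at a time via the coordinate projections and using the elementary fact that rank is subadditive along short exact sequences. The property assumed on each $A_i$ (that every subgroup has rank at most $\rk(A_i)$) is precisely what is needed to feed both ends of these sequences.

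The base case $n=1$ is exactly the hypothesis. For the inductive step, suppose the claim holds for $n-1$ factors and let $H\leqslant A_1\times \cdots \times A_n$. Consider the projection $\pi\colon A_1\times \cdots \times A_n \twoheadrightarrow A_n$ onto the last coordinate and restrict it to $H$. Its image $\pi(H)$ is a subgroup of $A_n$, so the hypothesis gives $\rk(\pi(H))\leqslant \rk(A_n)$. Its kernel is $K:=H\cap (A_1\times \cdots \times A_{n-1}\times 1)$, which I regard as a subgroup of $A_1\times \cdots \times A_{n-1}$; by the inductive hypothesis, $\rk(K)\leqslant \rk(A_1)+\cdots +\rk(A_{n-1})$.

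Next I would invoke the standard fact that for any short exact sequence $1\to K\to H\to Q\to 1$ one has $\rk(H)\leqslant \rk(K)+\rk(Q)$: lifting a minimal generating set of $Q=\pi(H)$ to $H$ and adjoining a minimal generating set of $K$ yields a generating set of $H$. Applying this to $1\to K\to H\to \pi(H)\to 1$ gives
$$
\rk(H)\leqslant \rk(K)+\rk(\pi(H))\leqslant \big(\rk(A_1)+\cdots +\rk(A_{n-1})\big)+\rk(A_n),
$$
which is the desired bound and completes the induction.

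There is no serious obstacle here: the only ingredients are the subadditivity of rank along short exact sequences (which is purely formal) and the observation that the hypothesis on the factors is used at both ends of the sequence, namely for the image $\pi(H)\leqslant A_n$ directly and for the kernel $K$ through the inductive hypothesis applied to the truncated product. The one minor point to keep straight is that the hypothesis is inherited correctly, i.e.\ that $K$ is genuinely (isomorphic to) a subgroup of $A_1\times \cdots \times A_{n-1}$, so that the inductive hypothesis applies verbatim.
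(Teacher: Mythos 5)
Your proof is correct and is essentially the paper's own argument: induction on $n$ using a coordinate projection, bounding $\rk(H)$ by $\rk(\pi(H))+\rk(H\cap\ker\pi)$ via lifted generators. The only (cosmetic) difference is the direction of the projection --- you project onto the last factor $A_n$ and apply the inductive hypothesis to the kernel, while the paper projects onto $A_1\times\cdots\times A_{n-1}$ and applies the single-factor hypothesis to the kernel $H\cap A_n$.
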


\begin{proof}
Let us do induction on $n$, the case $n=1$ being trivial.

Suppose the result is true for $n-1$, and let $H\leqslant A_1\times \cdots \times A_n$.
By induction, the image of $H$ under the canonical projection $\pi \colon A_1\times
\cdots \times A_n \twoheadrightarrow A_1\times \cdots \times A_{n-1}$ admits a set of at
most $\rk(\pi(H))\leqslant \rk(A_1)+\cdots +\rk(A_{n-1})$ generators. Choosing a
preimage in $H$ of each one, and adding to this set a set of generators for $H\cap
\ker\pi =H\cap A_n \leqslant A_n$, we obtain a set of generators for $H$. Hence,
$\rk(H)\leqslant \rk(\pi(H))+\rk(H\cap A_n)\leqslant \rk(A_1)+\cdots
+\rk(A_{n-1})+\rk(A_n)$, as we wanted to prove.
\end{proof}

\begin{cor}\label{facil}
Let $G=NS_2^{\, \ell}\times \mathbb{Z}^p \times (\mathbb{Z}/2\mathbb{Z})^q $ for some
integers $\ell, p, q\geqslant 0$. Then any subgroup $H\leqslant G$ satisfies
$\rk(H)\leqslant \rk(G)=2\ell +p+q$.
\end{cor}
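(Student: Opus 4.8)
The plan is to realize $G$ as an $n$-fold direct product of the three elementary building blocks $NS_2$, $\Z$, and $\Z/2\Z$ (appearing $\ell$, $p$, and $q$ times respectively, so $n=\ell+p+q$), and then to apply the preceding Lemma. By Lemma~\ref{ranks} we have $\rk(G)=2\ell+p+q$, matching the claimed bound, since $\rk(NS_2)=2$, $\rk(\Z)=1$, and $\rk(\Z/2\Z)=1$. Thus it suffices to verify the hypothesis of the Lemma for each factor separately: that every subgroup of $NS_2$ has rank at most $2$, and that every subgroup of $\Z$ or of $\Z/2\Z$ has rank at most $1$.

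The two cyclic factors are immediate. Every subgroup of $\Z$ is trivial or infinite cyclic, and every subgroup of $\Z/2\Z$ is trivial or all of $\Z/2\Z$; in either case the subgroup is cyclic, hence of rank at most $1=\rk(\Z)=\rk(\Z/2\Z)$.

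The one case requiring an argument is the Klein bottle group $NS_2$, and this is where the main work lies. Here I would use that $NS_2$ is torsion-free and virtually $\Z^2$: by Remark~\ref{centr exc} it contains $\Z^2\simeq\langle a^2,b\rangle$ as an index-two normal subgroup, so $NS_2$ is polycyclic of Hirsch length $2$. Given any subgroup $H\leqslant NS_2$, I would classify it according to its Hirsch length (equivalently, according to whether $H\cap\Z^2$ has rank $0$, $1$, or $2$). If $H$ has Hirsch length $2$ then $H$ has finite index in $NS_2$ and, being torsion-free, corresponds to a finite cover of the Klein bottle; since $\chi$ is multiplicative under finite covers and $\chi(NS_2)=0$, such a cover is again a torus or a Klein bottle, so $H\simeq\Z^2$ or $H\simeq NS_2$, giving $\rk(H)\leqslant 2$. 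If $H$ has Hirsch length at most $1$ then, being torsion-free, $H$ is trivial or infinite cyclic, whence $\rk(H)\leqslant 1$. In all cases $\rk(H)\leqslant 2=\rk(NS_2)$, and this subgroup classification is the only delicate point in the argument.

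With the hypothesis of the preceding Lemma verified for each of the factors $NS_2$, $\Z$, and $\Z/2\Z$, the Lemma applies directly to $G=NS_2^{\,\ell}\times\Z^p\times(\Z/2\Z)^q$ and yields $\rk(H)\leqslant 2\ell+p+q=\rk(G)$ for every $H\leqslant G$, as desired. Everything beyond the Klein bottle classification is a formal consequence of the Lemma together with Lemma~\ref{ranks}.
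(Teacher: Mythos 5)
Your proposal is correct, and at the top level it follows the same route as the paper: both proofs reduce, via the preceding lemma together with Lemma~\ref{ranks}, to the single claim that every subgroup of the Klein bottle group $NS_2$ has rank at most $2$, the cyclic factors being immediate. Where you genuinely diverge is in how that key claim is established. The paper argues purely combinatorially from the abelianization sequence $1\to [NS_2,NS_2]=\langle b^2\rangle \to NS_2 \to \Z\times\Z/2\Z \to 1$: for $H\leqslant NS_2$ it bounds $\rk(\pi(H))\leqslant 2$ and $\rk(H\cap\langle b^2\rangle)\leqslant 1$, giving a priori $\rk(H)\leqslant 3$, and then disposes of the critical case $\rk(\pi(H))=2$ by choosing explicit preimages and computing $H=\langle a^{\nu}b^{2\alpha}, b^{\tau}\rangle$ with $\tau=\gcd(1+2\beta,2\eta)$. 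You instead use structure theory: $NS_2$ is torsion-free and virtually $\Z^2$, and your Hirsch-length trichotomy plus multiplicativity of $\chi$ under finite covers yields the full classification that every subgroup of $NS_2$ is $1$, $\Z$, $\Z^2$, or $NS_2$. This proves strictly more than the paper needs (and is the structural fact behind the Wu--Zhang list of fixed subgroups quoted in Proposition~\ref{inertia small}), at the cost of importing standard but nontrivial inputs: torsion-freeness of $NS_2$ (e.g.\ from $NS_2\simeq\Z\rtimes\Z$, or asphericity of the Klein bottle), the classification of closed surfaces with $\chi=0$, and, in the Hirsch-length-one case, the fact that a torsion-free virtually cyclic group is infinite cyclic --- the one spot you assert without proof; for subgroups of $NS_2$ it is an easy check (if $t\in H$ inverts the generator $h$ of $H\cap\Z^2$, then $t^2\in\langle h\rangle$ is both fixed and inverted under conjugation by $t$, forcing $t^2=1$ and contradicting torsion-freeness). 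One small inaccuracy worth noting: torsion-freeness is not what makes a finite-index subgroup correspond to a finite cover --- that correspondence holds for any finite-index subgroup --- it is only needed to exclude torsion in the lower Hirsch-length cases. In sum, the paper's argument is more elementary and self-contained, while yours is more conceptual and delivers the stronger subgroup classification.
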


\begin{proof}
This is a direct consequence of the previous lemma, after showing that every subgroup of
$NS_2$ has rank at most 2. And this is true by the following argument: using the
notation from Remark~\ref{centr exc}, it is easy to see that the abelianization short
exact sequence for $NS_2$ is
 $$
1\longrightarrow [NS_2, NS_2]=\langle b^2\rangle \longrightarrow NS_2 \longrightarrow
NS_2^{\rm \, \, ab}\longrightarrow 1,
 $$
where $NS_2^{\rm \, ab}=\langle \overline{a},\overline{b} \mid
[\overline{a},\overline{b}],\, \overline{b}^2\rangle \simeq \Z \times \Z/2\Z$. Given
$H\leqslant NS_2$, $\pi(H)\leqslant \Z \times \Z/2\Z$ and so, $\rk(\pi(H))\leqslant 2$.
And $H\cap [NS_2 : NS_2]=\langle b^{2\eta }\rangle\leqslant \langle b^2\rangle$ for some
$\eta\geqslant 0$ and so, $\rk(H\cap [NS_2 : NS_2])\leqslant 1$. Hence, $\rk(H)\leqslant
2+1=3$. But in the case $\rk(\pi(H))=2$, it must be $\pi(H)=\langle \overline{a}^{\nu},
\overline{b}\rangle$ for some $\nu \geqslant 1$. Hence, choosing preimages in $H$, say
$a^{\nu} b^{2\alpha}$ and $b^{1+2\beta}$, of $\overline{a}^{\nu}$ and $\overline{b}$,
respectively, we have $H=\langle a^{\nu}b^{2\alpha}, b^{1+2\beta}, b^{2\eta}\rangle
=\langle a^{\nu}b^{2\alpha}, b^{\tau}\rangle$, where $\tau =\operatorname{gcd}(1+2\beta,
2\eta)$. Therefore, in any case $\rk(H)\leqslant 2$, as we wanted to see.
\end{proof}

\begin{prop}\label{exs no BH}
Let $G=G_1\times \cdots \times G_n$, $n\geqslant 1$, be a product group with $G_1$
containing a non-trivial central element $1\neq t\in Z(G_1)$, and with $Z(G_2)=1$. Then,
there exists an automorphism $\phi\in \aut(G)$ such that $\rk(\fix \phi)>\rk (G)$.
\end{prop}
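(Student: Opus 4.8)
The plan is to realize the obstruction explicitly by building a \emph{central transvection}. Since $1\neq t\in Z(G_1)$ and the $G_i$ are direct factors, the element $t$ is central in all of $G$, and the idea is to use it to twist the first coordinate by a character of the second. Because $Z(G_2)=1$, Lemma~\ref{center-centr} forces $G_2$ to be free non-abelian or a surface group with $\chi(G_2)<0$, and in every such case the abelianization has positive free rank; hence there is an epimorphism $\chi\colon G_2\twoheadrightarrow \Z$. I would then define $\phi\colon G\to G$ by
$$
\phi(g_1,g_2,g_3,\ldots,g_n)=(g_1\, t^{\chi(g_2)},\, g_2,\, g_3,\ldots,g_n).
$$
First I would check that $\phi\in\aut(G)$: the homomorphism property reduces, in the first coordinate, to the identity $t^{\chi(g_2)}h_1=h_1\, t^{\chi(g_2)}$, which holds precisely because $t$ is central; and $\phi$ is invertible with inverse $(g_i)_i\mapsto(g_1 t^{-\chi(g_2)},g_2,\ldots,g_n)$. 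This is exactly the mechanism of the motivating example $F_2\times\Z$ discussed above, where centrality of the twisting element was seen to be essential.

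Next I would compute the fixed subgroup. An element is fixed if and only if $t^{\chi(g_2)}=1$, i.e. if and only if $\chi(g_2)$ is a multiple of the order $o(t)$ of $t$ (with the convention that this means $\chi(g_2)=0$ when $o(t)=\infty$). Writing $N=\chi^{-1}(o(t)\Z)\leqslant G_2$ for the corresponding normal subgroup, this gives
$$
\fix\phi = G_1\times N\times G_3\times\cdots\times G_n,
$$
a direct product in which the factor $G_2$ has been replaced by its proper normal subgroup $N$. The whole point is then to show $\rk(\fix\phi)>\rk(G)$, and since $\rk(G)=\sum_i\rk(G_i)$ by Lemma~\ref{ranks}, it suffices to prove that the rank contributed by $N$ strictly exceeds $\rk(G_2)$.

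The main work --- and the step I expect to be the crux --- is this rank comparison for $N$, which splits according to the order of $t$. If $o(t)=\infty$, then $N=\ker\chi$ is a non-trivial normal subgroup of infinite index in $G_2$; invoking the standard fact that a non-trivial finitely generated normal subgroup of a free non-abelian group or of a surface group of negative Euler characteristic has finite index, $N$ must be infinitely generated, and since $\fix\phi$ retracts onto $N$ it is infinitely generated too, so $\rk(\fix\phi)=\infty>\rk(G)$. If instead $o(t)=d<\infty$ (so $d\geqslant 2$), then $N$ has finite index $d$ in $G_2$, hence is again free (if $G_2$ is free) or a surface group (if $G_2$ is a surface group); in the free case Nielsen--Schreier gives $\rk(N)=d(\rk(G_2)-1)+1>\rk(G_2)$, and in the surface case the multiplicativity $\chi(N)=d\,\chi(G_2)$ together with $\rk=2-\chi$ gives $\rk(N)-\rk(G_2)=(d-1)|\chi(G_2)|>0$. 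In this case $\fix\phi$ is itself a product group, so Lemma~\ref{ranks} applies and yields $\rk(\fix\phi)=\rk(G_1)+\rk(N)+\sum_{i\geqslant 3}\rk(G_i)>\rk(G)$. Either way the Bestvina--Handel bound is violated, completing the argument. The only delicate points are ensuring the twisting map is a genuine automorphism (handled by centrality of $t$) and controlling $\rk(N)$ in the infinite-order case, where one leans on the normal-subgroup dichotomy for free and surface groups.
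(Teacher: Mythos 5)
Your construction is correct, and at heart it is the same central-transvection mechanism as the paper's proof: when $\chi$ is the $a_1$-exponent map, your $\phi$ is literally the paper's automorphism $a_1\mapsto ta_1$. Where you genuinely differ is in execution, at two points. First, by taking an abstract epimorphism $\chi\colon G_2\twoheadrightarrow \Z$ (which exists since the abelianizations $\Z^r$, $\Z^{2g}$, $\Z^{k-1}\times\Z/2\Z$ all have positive free rank), you uniformize what the paper does in three presentation-specific cases; in particular you silently absorb the non-orientable subtlety: on $NS_k$ the single exponent $|w|_1$ is not well defined, so the paper must twist two generators ($a_1\mapsto ta_1$, $a_2\mapsto t^{-1}a_2$) to realize the well-defined functional $|w|_1-|w|_2$, whereas your $\chi$ is exactly such a functional chosen abstractly, so the issue never arises. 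Second, in the infinite-order case you invoke Greenberg's theorem (a non-trivial finitely generated normal subgroup of a non-abelian free group, or of a surface group with negative Euler characteristic, has finite index), while the paper proves infinite generation of the kernel by hand --- the Schreier formula in the free case, and for surfaces an Euler-characteristic argument showing that $\langle \ker\pi, x\rangle$ has arbitrarily large rank as $x$ varies; your route is shorter but leans on a cited classical theorem where the paper is self-contained, so you should supply a reference (Greenberg for free and Fuchsian groups) if this is to stand alone. Two minor remarks: your argument covers any finite order $d=o(t)\geqslant 2$ via index-$d$ subgroups (Nielsen--Schreier, respectively $\chi(N)=d\,\chi(G_2)$ with $\rk=2-\chi$, both giving $\rk(N)>\rk(G_2)$), whereas the paper first pins down $o(t)\in\{2,\infty\}$ from the structure of the possible $G_1$ --- harmless extra generality; and your appeal to Lemma~\ref{ranks} for $\fix\phi=G_1\times N\times G_3\times\cdots\times G_n$ is legitimate because $N$ is again a finitely generated free group or a surface group, which is exactly the hypothesis that lemma needs.
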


\begin{proof}
By Lemma~\ref{center-centr} and the facts $Z(F_1)=F_1$, $Z(S_1)=S_1$, $Z(NS_1)=NS_1$ and
$Z(NS_2)\simeq \mathbb{Z}$, we deduce that the order of $t\neq 1$ is either two or
infinite, $o(t)=2, \infty$. On the other hand, $G_2$ is either $F_r$ with $r\geqslant
2$, or $S_g$ with $g\geqslant 2$, or $NS_k$ with $k\geqslant 3$.

Suppose $G_2 =F_r =\langle a_1, \ldots ,a_r \mid \, \rangle$ with $r\geqslant 2$. Map
$a_1$ to $ta_1$, and fix all the other standard generators of $G$. This determines a
well defined automorphism $\phi \in \aut(G)$ sending $w(a_1,\ldots ,a_r)$ to
$w(ta_1,a_2, \ldots ,a_r)=t^{|w|_1}w(a_1,\ldots ,a_r)$, where $|w|_1 \in \mathbb{Z}$ is
the total $a_1$-exponent of $w\in G_2$. Hence,
 $$
\fix\phi =G_1 \times \{ w\in G_2 \mid |w|_1\equiv 0\} \times G_3\times \cdots \times G_n,
 $$
where $\equiv$ means equality of integers modulo $o(t)$ (by convention, read
$\mathbb{Z}/\infty\mathbb{Z}$ as just $\mathbb{Z}$). Now consider the projection
$\pi\colon G_2\twoheadrightarrow \mathbb{Z}/o(t)\mathbb{Z}$, $w\mapsto |w|_1$. Its
kernel, $\ker \pi$, is a normal subgroup of $G_2 =F_r$ of either infinite index (and so,
infinitely generated) or of index 2 (and so, $\rk(\ker \pi)=1+2(r-1)=2r-1$, according to
the Schreier index formula for free groups). In both cases, $\rk(\ker \pi)>r=\rk(G_2)$
and, by Lemma~\ref{ranks}, $\rk(\fix \phi)=\rk(G_1)+\rk(\ker \pi)+\rk(G_3)+\cdots
+\rk(G_n)>\rk(G)$. (In the infinite order case, this example was also considered for
similar reasons in~\cite{DeV, Z}.)

Now suppose $G_2=S_g =\langle a_1,b_1,\ldots,a_g,b_g \mid [a_1, b_1]\cdots [a_g,
b_g]\rangle$ with $g\geqslant 2$. Consider the automorphism $\phi \in \aut(G)$ defined
in the same way, namely mapping $a_1$ to $ta_1$ and fixing all the other standard
generators of $G$ (this determines a well defined automorphism of $G$ because $t$
commutes with $b_1$). As above, $w(a_1,b_1,\ldots ,a_g,b_g)$ maps to $w(ta_1,b_1,\ldots
,a_g, b_g )=t^{|w|_1}w(a_1,b_1,\ldots ,a_g,b_g)$, where $|w|_1 \in \mathbb{Z}$ is the
total $a_1$-exponent of $w\in G_2$ (which still makes sense because the defining
relation in $G_2$ has total $a_1$-exponent equal to zero). Hence, as above,
 $$
\fix\phi =G_1 \times \{ w\in G_2 \mid |w|_1\equiv 0\} \times G_3\times \cdots \times G_n,
 $$
where $\equiv$ means equality of integers modulo $o(t)$. The argument proceeds and
concludes like above, after proving that the rank of the kernel of $\pi\colon
G_2\twoheadrightarrow \mathbb{Z}/o(t)\mathbb{Z}$, $w\mapsto |w|_1$ is, again, strictly
bigger than $\rk(G_2)=2g$ (note that $\ker \pi$ is either a free group or a surface
group again so, Lemma~\ref{ranks} still applies). If $o(t)=2$, this is true because
$\ker\pi$ is a subgroup of index two in $G_2$, and so a surface group of bigger rank.
And if $o(t)=\infty$ then $\ker\pi$ is infinitely generated by the following argument:
$\ker \pi$ is a subgroup of infinite index in $G_2$ (and so free), but maximal as a free
subgroup: in fact, for every $x\in G_2\setminus \ker \pi$, we have $[G_2 : \langle \ker
\pi, x\rangle]=[\mathbb{Z} : \langle \pi(x)\rangle]=|\pi(x)|<\infty$ and so, $\langle
\ker \pi, x\rangle$ is a surface group of Euler characteristic equal to $[G_2 : \langle
\ker \pi, x\rangle]\chi(G_2)=|\pi(x)|(2-2g)$ and thus, of rank $2+|\pi(x)|(2g-2)$.
Choosing $x$ appropriately, this rank is arbitrarily big and therefore $\ker \pi$ cannot
be finitely generated.

Finally, suppose $G_2=\langle a_1,a_2,\ldots,a_k \mid a_1^2 \cdots a_k^2\rangle$ with
$k\geqslant 3$. Map $a_1$ to $ta_1$, $a_2$ to $t^{-1}a_2$ and fix all the other standard
generators of $G$ (this determines a well defined $\phi \in \aut(G)$ because $t$
commutes with both $a_1$ and $a_2$). Observe that now, because of the form of the
defining relation of $G_2$, the ``total $a_i$-exponent" of an element of $w\in G$ is not
well defined. However, the difference of two of them, say $|w|_1-|w|_2\in \mathbb{Z}$,
it really is; in other words, $G_2\twoheadrightarrow \mathbb{Z}$, $w\mapsto
|w|_1-|w|_2$, is a well defined morphism from $G_2$ onto $\mathbb{Z}$. Composing it with
the canonical projection, we get an epimorphism $\pi \colon G_2 \twoheadrightarrow
\mathbb{Z}/o(t)\mathbb{Z}$ and, since $\phi$ maps $w(a_1,\ldots ,a_k)$ to
$w(ta_1,t^{-1}a_2, a_3, \ldots ,a_k)=t^{|w|_1-|w|_2}w(a_1,\ldots ,a_k)$, we deduce like
in the above cases that
 $$
\fix\phi =G_1 \times \ker \pi \times G_3\times \cdots \times G_n.
 $$
The same argument as above shows that $\rk(\ker \pi)>\rk(G_2)$ and so,
$\rk(\fix\phi)>\rk(G)$, completing the proof.
\end{proof}

The construction in Proposition~\ref{exs no BH} is, essentially, the only way to produce
automorphisms whose fixed subgroups have rank bigger than the ambient group. This is the
contents of the following result, characterizing exactly which product groups satisfy
Bestvina--Handel inequality.

\begin{thm}\label{main products 1}
Let $G=G_1\times \cdots \times G_n$, $n\geqslant 1$, be a product group, where each
$G_i$ is a finitely generated free group or a surface group. Then, $\rk(\fix \phi
)\leqslant \rk(G)$ for every $\phi \in \aut(G)$ if and only if $G$ is either of
euclidean or of hyperbolic type.
\end{thm}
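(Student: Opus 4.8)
The plan is to prove the two implications separately, reducing each to results already in hand. For the \emph{necessity} of the hypothesis I would argue by contraposition: if $G$ is neither of euclidean nor of hyperbolic type, then by the trichotomy following Lemma~\ref{center-centr} it is of mixed type, so some factor has non-trivial center and some other factor has trivial center. After reordering the coordinates (which replaces $G$ by an isomorphic group, and the property ``$\rk(\fix\phi)\leqslant\rk(G)$ for all $\phi$'' is isomorphism-invariant) I may assume $1\neq t\in Z(G_1)$ and $Z(G_2)=1$. Then Proposition~\ref{exs no BH} directly produces an automorphism $\phi\in\aut(G)$ with $\rk(\fix\phi)>\rk(G)$, so the stated condition fails; this yields the ``only if'' direction.

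For \emph{sufficiency} I would split along the two types. If $G$ is euclidean, i.e. $G=NS_2^{\,\ell}\times\mathbb{Z}^p\times(\mathbb{Z}/2\mathbb{Z})^q$, then Corollary~\ref{facil} already bounds the rank of \emph{every} subgroup of $G$ by $\rk(G)$; since $\fix\phi\leqslant G$, we get $\rk(\fix\phi)\leqslant\rk(G)$ for each $\phi$ (in fact for each endomorphism), with nothing further to do. The genuine work is the hyperbolic case $Z(G)=1$. Here I would write $G$ in block notation $G=G_1^{\,n_1}\times\cdots\times G_m^{\,n_m}$ and invoke Proposition~\ref{autos} to express any $\phi\in\aut(G)$ as $\prod_{i=1}^m(\sigma_i\circ\prod_{j=1}^{n_i}\phi_{i,j})$ with $\sigma_i\in S_{n_i}$ and $\phi_{i,j}\in\aut(G_i)$. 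Since $\phi$ preserves each block, $\fix\phi$ is the direct product of the subgroups $\fix(\phi|_{G_i^{\,n_i}})$, so by subadditivity of rank under direct products $\rk(\fix\phi)\leqslant\sum_i\rk(\fix(\phi|_{G_i^{\,n_i}}))$. As $\rk(G)=\sum_i n_i\rk(G_i)$ by Lemma~\ref{ranks}, it then suffices to prove, block by block, that $\rk(\fix(\phi|_{G_i^{\,n_i}}))\leqslant n_i\,\rk(G_i)$.

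The heart of the matter, and the step I expect to be the main obstacle, is computing the fixed subgroup of a single ``rectangular-up-to-permutation'' automorphism $\psi=\sigma\circ(\psi_1\times\cdots\times\psi_n)$ of $H^n$, where $H=G_i$ is free non-abelian or a surface group with $\chi(H)<0$ and $n=n_i$. I would decompose $\sigma$ into disjoint cycles: on the coordinates of a cycle $(c_1\,\cdots\,c_\ell)$ the fixed-point equations read $x_{c_{j+1}}=\psi_{c_j}(x_{c_j})$, so every coordinate is determined by $x_{c_1}$, and closing up the cycle forces $x_{c_1}\in\fix(\Psi_c)$ for a suitable composite automorphism $\Psi_c=\psi_{c_\ell}\circ\cdots\circ\psi_{c_1}\in\aut(H)$. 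Thus projection onto $x_{c_1}$ identifies the cycle's contribution to $\fix\psi$ with $\fix(\Psi_c)$, and $\fix\psi$ is the direct product of these contributions over the at most $n$ cycles of $\sigma$.

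Once this structural description is in place, the bound is immediate: Theorem~\ref{BH} in the free case and Theorem~\ref{WZ thm}(i) in the surface case each give $\rk(\fix\Psi_c)\leqslant\rk(H)=\rk(G_i)$ for every cycle, so summing over the at most $n$ cycles yields $\rk(\fix\psi)\leqslant n\,\rk(G_i)$, exactly the block estimate required. Assembling the blocks and using Lemma~\ref{ranks} gives $\rk(\fix\phi)\leqslant\sum_i n_i\,\rk(G_i)=\rk(G)$, completing the ``if'' direction. The only delicate points to verify carefully are that $\phi$ genuinely stabilizes each isomorphism block (guaranteed by Proposition~\ref{autos}) and the bookkeeping of the coordinate-permutation convention in the cycle computation; neither affects the final inequality, since $\Psi_c$ is an automorphism of $H$ regardless of convention.
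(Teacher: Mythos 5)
Your proposal is correct and takes essentially the same route as the paper's own proof: the ``only if'' direction via Proposition~\ref{exs no BH} (after reordering coordinates), the euclidean case via Corollary~\ref{facil}, and the hyperbolic case via Proposition~\ref{autos}, reduction to blocks, cycle decomposition of the permutation, and the identification of each cycle's contribution to the fixed subgroup with $\fix(\phi_{c_\ell}\cdots\phi_{c_1})$, bounded by Theorems~\ref{BH} and~\ref{WZ thm}(i). The only cosmetic difference is that the paper reduces notationally to a single block and then to a single cycle $(n,n-1,\ldots,1)$ before computing, whereas you treat all cycles simultaneously and sum; the underlying argument is identical.
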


\begin{proof}
Proposition~\ref{exs no BH} immediately gives us the ``only if\," part.

For the ``if\," part, let us distinguish the two situations. If $G$ is of euclidean type
then, by Lemma~\ref{center-centr}, $G=NS_2^{\, \ell}\times \mathbb{Z}^p \times
(\mathbb{Z}/2\mathbb{Z})^q$ and, by Corollary~\ref{facil}, we are done.

Now assume $G$ of hyperbolic type, let $\phi \in \aut(G)$, and let us prove that
$\rk(\fix \phi )\leqslant \rk(G)$. By Proposition~\ref{autos} (and adopting the block
notation for $G$), there exist automorphisms $\phi_{i,j}\in \aut(G_i)$ and permutations
$\sigma_i \in S_{n_i}$, such that
 $$
\phi =\prod_{i=1}^m \big( \sigma_i \circ \prod_{j=1}^{n_i} \phi_{i,j} \big).
 $$
Since $\sigma_i \circ (\phi_{i,1}\times \cdots \times \phi_{i,n_i})\in \aut
(G_i^{\,n_i})$ for $i=1,\ldots ,m$, and it is clear that
 $$
\fix \phi =\fix \big( \sigma_1 \circ (\phi_{1,1}\times \cdots \times \phi_{1,n_1}) \big)
\times \cdots \times \fix \big( \sigma_m \circ (\phi_{m,1}\times \cdots \times \phi_{m,n_m})
\big),
 $$
we are reduced to prove the statement for the case $m=1$.

So, let us reduce ourselves to the situation $G=G_1^{\, n}=G_{1,1}\times \cdots \times
G_{1,n}$ (with $G_{1,i}=G_1$) and $\phi =\sigma \circ \big( \phi_1\times \cdots \times
\phi_n \big)$, for some $\sigma\in S_n$ and some $\phi_j\in \aut(G_{1,j})$, $j=1,\ldots
,n$. If $\sigma=Id$ then
 $$
\fix \phi =\fix (\phi_1 \times \cdots \times \phi_n) =\fix \phi_1 \times \cdots \times \fix
\phi_n
 $$
and so, by Theorems~\ref{BH} and~\ref{WZ thm}(i), and using Lemma~\ref{ranks}, we have
 $$
\rk(\fix \phi)\leqslant \rk(\fix \phi_1)+\cdots +\rk(\fix \phi_n)\leqslant n\, \rk(G_1)=
\rk(G_1^{\, n})=\rk (G)
 $$
and we are done.

Assume $\sigma \neq Id$ and consider its decomposition as a product of cycles with
disjoint supports, $\sigma=\tau_1\circ \cdots \circ \tau_{\ell}$ with
$\supp(\tau_1)\sqcup \cdots \sqcup \supp (\tau_{\ell})=\{ 1,\ldots ,n\}$. Then,
 $$
\phi = \tau_1\circ \cdots \circ \tau_{\ell} \circ
\big( (\prod_{i\in \supp(\tau_1)} \phi_i )\times \cdots \times (\prod_{i\in \supp(\tau_{\ell}
)}\phi_i ) \big)=
 $$
 $$
=\big( \tau_1 \circ \prod_{i\in \supp(\tau_1)} \phi_i \big)\times \cdots \times \big(
\tau_{\ell} \circ \prod_{i\in \supp(\tau_{\ell} )}\phi_i \big),
 $$
where $\big( \tau_j \circ \prod_{i\in \supp(\tau_j )}\phi_i \big) \in \aut(\prod_{i\in
\supp(\tau_j )} G_{1,i})$. By counting $\rk(\fix \phi)$ as above, we are reduced to the
case $\ell=1$ i.e., we can assume $\sigma$ to be a cycle of length $n$.

Changing the notation if necessary, we can assume $\sigma=(n,n-1,\ldots ,1)$. In this
situation, our automorphism $\phi$ has the form
 $$
\begin{array}{rcl} \phi \colon G_{1,1}\times \cdots \times G_{1,n} & \to & G_{1,1}\times
\cdots \times G_{1,n} \\ (g_1,\ldots ,g_n) & \mapsto & \sigma(\phi_1(g_1), \phi_2(g_2),
\ldots , \phi_n(g_n)) = \\ & & =(\phi_n(g_n), \phi_1(g_1), \ldots , \phi_{n-1}(g_{n-1})).
\end{array}
 $$
Note that if $(g_1,\ldots ,g_n)\in \fix \phi$ then $g_1=\phi_n(g_n),
g_2=\phi_1(g_1),\ldots ,g_n=\phi_{n-1}(g_{n-1})$ and so, $g_1 =\phi_n\cdots
\phi_1(g_1)$. Then, it is straightforward to see that
 $$
\fix \phi =\{ \big( g, \phi_1(g), \phi_2\phi_1(g),\ldots ,\phi_{n-1}\cdots \phi_1(g)\big)
\mid g\in \fix (\phi_n\cdots\phi_1)\}.
 $$
Finally, by Theorems~\ref{BH} and~\ref{WZ thm}(i), we deduce
 $$
\rk(\fix \phi )=\rk(\fix (\phi_n\cdots\phi_1))\leqslant \rk (G_1)\leqslant \rk(G_1^{\, n})
=\rk(G),
 $$
concluding the proof.
\end{proof}

The next natural question is to ask which product groups $G$ enjoy the property that
every automorphism has compressed (or inert) fixed point subgroup. Of course, they form
a subset of those product groups satisfying Bestvina--Handel inequality, described in
Theorem~\ref{main products 1}; compression is more restrictive, and inertia even more,
as can be seen in the following two results. We cannot give a full characterization but
give necessary conditions, and state two conjectures characterizing both properties.

\begin{thm}\label{main products 2}
Let $G=G_1\times \cdots \times G_n$, $n\geqslant 1$, be a product group, where each
$G_i$ is a finitely generated free group or a surface group. If $\fix \phi$ is
compressed in $G$ for every $\phi \in \aut(G)$, then $G$ must be of one of the following
forms:
\begin{itemize}
\item[(\emph{euc1})] $G=\Z^p \times (\Z/2\Z)^q$ for some $p,q\geqslant 0$; or
\item[(\emph{euc2})] $G=NS_2\times (\Z/2\Z)^q$ for some $q\geqslant 0$; or
\item[(\emph{euc3})] $G=NS_2\times \Z^p \times (\Z/2\Z)$ for some $p\geqslant 1$; or
\item[(\emph{euc4})] $G=NS_2^{\, \ell}\times \Z^p$ for some $\ell\geqslant 1$,
    $p\geqslant 0$; or
\item[] \vskip -.3cm
\item[(\emph{hyp1})] $G=F_r\times NS_3^{\, \ell}$ for some $r\geqslant 2$,
    $\ell\geqslant 0$; or
\item[(\emph{hyp2})] $G=S_g\times NS_3^{\, \ell}$ for some $g\geqslant 2$,
    $\ell\geqslant 0$; or
\item[(\emph{hyp3})] $G=NS_k\times NS_3^{\, \ell}$ for some $k\geqslant 3$,
    $\ell\geqslant 0$.
\end{itemize}
\end{thm}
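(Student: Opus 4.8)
This is Theorem~\ref{main products 2}, a *necessary condition*: if every automorphism of a product group $G$ has compressed fixed subgroup, then $G$ must have one of seven specific forms (euc1–euc4, hyp1–hyp3). So I need to take the contrapositive: given a product group $G$ NOT of any of these seven forms, construct an automorphism $\phi$ whose fixed subgroup is NOT compressed.

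Let me think about what's going on. We already know from Theorem~\ref{main products 1} that if $G$ satisfies even Bestvina–Handel inequality ($\rk(\fix\phi)\le \rk(G)$ for all $\phi$), then $G$ is of euclidean or hyperbolic type. Compression is stronger than BH (compression implies $\rk(\fix\phi)\le\rk(K)\le\rk(G)$... wait, no). Let me reconsider.

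Compression means: $\rk(\fix\phi)\le\rk(K)$ for every $K$ with $\fix\phi\le K\le G$. Taking $K=G$ gives $\rk(\fix\phi)\le\rk(G)$, i.e., BH inequality. So compression for all $\phi$ implies BH for all $\phi$, hence (by Thm~\ref{main products 1}) $G$ is euclidean or hyperbolic type. Good — so I start from there.

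So the seven allowed forms must be a sub-list of {euclidean type} ∪ {hyperbolic type}. Euclidean type is $NS_2^\ell\times\Z^p\times(\Z/2\Z)^q$. The euc1–euc4 are exactly the euclidean-type groups that don't have "too many" independent torsion/central pieces. Hyperbolic type is arbitrary products of $F_r$ ($r\ge2$), $S_g$ ($g\ge2$), $NS_k$ ($k\ge3$); hyp1–hyp3 say essentially: at most one non-$NS_3$ factor, and any number of $NS_3$ factors.

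**Strategy.** The plan would be: assume $G$ is euclidean or hyperbolic (by Thm~\ref{main products 1}, else BH already fails so compression fails). Then show that if $G$ is of euclidean type but not euc1–euc4, I can build $\phi$ with non-compressed fixed subgroup; and if $G$ is of hyperbolic type but not hyp1–hyp3, likewise. This splits into two independent arguments.

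**Euclidean case.** Write $G=NS_2^\ell\times\Z^p\times(\Z/2\Z)^q$. The forbidden configurations are those NOT matching euc1–euc4. Examining the list: euc1 is $\ell=0$; euc2 is $\ell=1,p=0$; euc3 is $\ell=1,q=1$ (with $p\ge1$); euc4 is $q=0$ (with $\ell\ge1$). So the allowed euclidean groups are exactly those where you can't simultaneously have $\ell\ge1$ together with "enough" of both $\Z$ and $\Z/2\Z$ factors, or $\ell\ge2$ with a $\Z/2\Z$, etc. The key obstruction: $NS_2$ has center $\Z$ generated by $a^2$, and admits automorphisms using this central element to "twist." I would use the central element of $NS_2$ (order $\infty$) or the $\Z$-factors / $\Z/2\Z$-factors to build twisting automorphisms à la Proposition~\ref{exs no BH}, but now needing to violate compression rather than BH. Concretely: in $NS_2$ itself one can, using its central $\Z$, map the $\Z/2\Z$-quotient generator by a central twist to produce a $\phi$ whose fixed subgroup $H=\fix\phi$ sits inside some $K$ of strictly smaller rank. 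The finite list euc1–euc4 is a careful bookkeeping of exactly which combinations survive; I expect the proof to construct an explicit counterexample automorphism for each excluded case. The cleanest way: show that $NS_2\times\Z\times(\Z/2\Z)$ (violating because it's neither euc3 with the right shape nor euc4) and $NS_2\times\Z/2\Z\times\Z/2\Z$ and $NS_2^2\times\Z/2\Z$ each admit a non-compressed $\phi$, and that every excluded euclidean group contains one of these as a direct factor, letting me extend the automorphism by the identity on the remaining factors.

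**Hyperbolic case.** Write $G$ as a product of hyperbolic building blocks. The allowed hyp1–hyp3 all have the shape (one factor among $F_r,S_g,NS_k$) $\times\, NS_3^\ell$. So the excluded hyperbolic groups are those with either two non-$NS_3$ factors, or (any factor) $\times NS_k$ for some $k\ge4$, etc. Here I'd use the permutation/rectangular structure from Proposition~\ref{autos}: since $Z(G)=1$, every automorphism is rectangular-up-to-permutation, so $\fix\phi$ decomposes as a product of fixed subgroups of the individual blocks (or of cyclic-permutation blocks, as in the proof of Thm~\ref{main products 1}). The idea is to pick an automorphism on one block $G_i$ whose fixed subgroup $\fix\phi_i$, while satisfying $\rk(\fix\phi_i)\le\rk(G_i)$, is NOT compressed *as a subgroup of the product*: the extra factors give room to embed $\fix\phi_i\times\{1\}$ into a subgroup $K$ of the product of smaller total rank. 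The crucial point distinguishing $NS_3$ from larger $NS_k$ and from products-of-two: I'd show $NS_3$ has the special feature (used already in Corollary~\ref{cor surface2}: every non-automorphism endomorphism of $NS_3$ has image of rank $\le1$, and $\fix\phi$ of any $\phi\in\aut(NS_3)$ is cyclic or trivial by inertia Thm~\ref{WZ thm}(iii)) that its fixed subgroups are always small enough to stay compressed even after taking products, whereas $NS_k$ with $k\ge4$, or two large factors, admit an automorphism with a fixed subgroup of rank $2$ that fails compression in the ambient product.

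**The main obstacle.** The hard part will be the exhaustive-but-sharp verification that the seven forms are *exactly* right — i.e., producing a genuine non-compressed automorphism for *every* excluded $G$ while confirming (this is deferred to the companion conjectures/positive results) that the listed forms don't obviously fail. Since this is only the necessity direction, I focus on the constructions. The delicate step is the hyperbolic case: I must find, for a group like $S_g\times S_h$ (two genuine surface factors, $g,h\ge2$) or $F_r\times NS_4$, an automorphism $\phi$ together with an explicit overgroup $K\supseteq\fix\phi$ with $\rk(K)<\rk(\fix\phi)$. The natural construction uses a permutation automorphism on two isomorphic factors (when the two factors coincide) — the cyclic-shift automorphism $\sigma$ whose fixed subgroup is a diagonal copy $\{(g,\phi_1(g)):g\in\fix(\phi_1)\}$ of rank $\rk(\fix(\phi_1))$ — and shows this diagonal embeds in a rank-lower subgroup of the product. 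When the two factors are non-isomorphic one instead uses a rectangular $\phi$ with one coordinate an automorphism having a rank-$2$ fixed subgroup (e.g. a pseudo-Anosov-free combination) and exploits the second factor to build the compressing overgroup $K$. I would organize the whole proof as: (1) invoke Thm~\ref{main products 1} to reduce to euclidean/hyperbolic type; (2) handle euclidean type by reducing to three minimal bad factors and the factor-extension lemma; (3) handle hyperbolic type via Proposition~\ref{autos}, the product-decomposition of $\fix\phi$, and explicit compressing overgroups, with the $NS_3$-versus-larger dichotomy as the pivot.
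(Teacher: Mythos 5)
Your high-level skeleton matches the paper's proof (reduce via Theorem~\ref{main products 1} to euclidean/hyperbolic type, then construct an explicit non-compressed automorphism for each excluded form), but both of your case analyses contain genuine errors. In the euclidean case your list of ``minimal bad factors'' is wrong: $NS_2\times \Z\times(\Z/2\Z)$ is precisely form (euc3) with $p=1$, and $NS_2\times(\Z/2\Z)^2$ is form (euc2) with $q=2$, so both are \emph{allowed} groups and no compression-violating automorphism can be expected there (indeed the paper conjectures these groups have all fixed subgroups inert); you would be attempting impossible constructions and your ``direct factor'' reduction would then miss the genuinely excluded groups. The correct restrictions, which the paper proves, are ``$\ell\geqslant 2\Rightarrow q=0$'' and ``$\ell,p\geqslant 1\Rightarrow q\leqslant 1$'', with minimal bad cases $NS_2^2\times\Z/2\Z$ (which you do list) and $NS_2\times\Z\times(\Z/2\Z)^2$ (which you do not); in each the paper twists \emph{two} generators by \emph{two} different central involutions ($b\mapsto be$, $c\mapsto cd$, resp.\ $b\mapsto bd$, $c\mapsto ce$), getting $\fix\phi$ of rank $5$ inside the rank-$\leqslant 4$ overgroup $\langle a,bc,d,e\rangle\times\cdots$.

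In the hyperbolic case the key idea is absent from your proposal, and the substitutes you sketch provably fail. The paper twists \emph{both} of the two non-$NS_3$ factors: it chooses $\phi_i\in\aut(G_i)$ ($i=1,2$) whose fixed subgroups are free on generators including the conjugates $x_1x_2x_1^{-1}$ and $y_1y_2y_1^{-1}$, and then, since the $x$'s commute with the $y$'s, the single element $x_1y_1$ conjugates $x_2$ to $x_1x_2x_1^{-1}$ \emph{and} $y_2$ to $y_1y_2y_1^{-1}$; so the overgroup $H=\langle x_2,y_2,x_1y_1,/x_3/,\ldots\rangle\times G_3\times\cdots$ contains $\fix\phi$ with $\rk(H)\leqslant\rk(\fix\phi)-1$. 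Your alternative of twisting only one coordinate cannot work: then $\fix\phi$ contains the entire second factor, so any overgroup $K$ contains $y_1$, and together with the merged generator it recovers $x_1$, collapsing $K$ to full rank (in $F_2\times F_2$ your $K$ would simply be $G$). Your swap/diagonal idea also fails: any overgroup of the diagonal surjects onto $G_1$ under the first projection, so the diagonal is compressed. Finally, your stated reason that $NS_3$ is special is false --- fixed subgroups of automorphisms of $NS_3$ can have rank $2$ (e.g.\ $a\mapsto ab$, $b\mapsto b$, $c\mapsto c$ fixes $\langle b,c\rangle$, as the paper notes); the actual point is that for $NS_3$ the conjugate generator $aba^{-1}$ is already redundant modulo the defining relation, so the conjugator-merging trick yields no rank drop, which is why $k\geqslant 4$ (two independent pairs) is needed.
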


\begin{proof}
Assume that $\fix \phi$ is compressed in $G$ for every $\phi \in \aut(G)$. In
particular every $\phi \in \aut(G)$ satisfies $\rk(\fix \phi)\leqslant \rk(G)$ and, by
Theorem~\ref{main products 1}, $G$ is either of euclidean type or of hyperbolic type.
In the first case, we shall prove that $G$ is specifically of form (euc1), or (euc2),
or (euc3), or (euc4). And in the second case $G$ will be of the form (hyp1), or (hyp2),
or (hyp3).

Let us assume $G$ is euclidean i.e., $G=NS_2^{\, \ell}\times \mathbb{Z}^p \times
(\mathbb{Z}/2\mathbb{Z})^q$ for some integers $\ell, p,q\geqslant 0$. The next two
paragraphs will prove that ``if $\ell\geqslant 2$ then $q=0$", and ``if $\ell,p\geqslant
1$ then $q=0,1$". By distinguishing whether $\ell=0$, or $\ell =1$, or $\ell \geqslant
2$, these two restrictions in the parameters force $G$ to fall into one of the forms
(euc1), or (euc2), or (euc3), or (euc4).

To see that $\ell\geqslant 2$ implies $q=0$, assume $\ell \geqslant 2$ and $q\geqslant
1$ and let us construct an automorphism of $G$ whose fixed subgroup is not compressed.
In this case we have $G=NS_2^{\, 2}\times (\Z/2\Z)\times G_4\times \cdots \times
G_n=\langle a,b \mid aba^{-1}b\rangle \times \langle c,d \mid cdc^{-1}d \rangle \times
\langle e \mid e^2\rangle \times G_4\times \cdots \times G_n$. Consider the map $\phi
\colon G\to G$, $a\mapsto a$, $b\mapsto be$, $c\mapsto cd$, $d\mapsto d$, $e\mapsto e$,
and fixing the rest of generators. It is straightforward to see that $\phi$ is a well
defined automorphism and, using normal forms of elements, its fixed subgroup is $\fix
\phi =\langle a, b^2, c^2, d, e\rangle \times G_4\times \cdots \times G_n =\langle
a,b^2\rangle \times \langle c^2, d\rangle \times \langle e\rangle \times G_4\times
\cdots \times G_n \simeq NS_2 \times \Z^2 \times \Z/2\Z \times G_4\times \cdots \times
G_n$. By Lemma~\ref{ranks}, $\rk(\fix \phi )=5+\rk (G_4)+\cdots +\rk (G_n )$. But $\fix
\phi$ is contained in $\langle a, bc, d, e\rangle \times G_4\times \cdots \times G_n$
(note that conjugating $bc$ by $a$ one gets $b^{-1}c$), which has rank less than or
equal to $4+\rk (G_4)+\cdots +\rk (G_n )<r(\fix \phi)$; therefore, $\fix \phi$ is not
compressed in $G$.

And, in order to see that $\ell,p\geqslant 1$ implies $q=0,1$, assume $\ell,p\geqslant
1$ and $q\geqslant 2$ and let us construct an automorphism of $G$ whose fixed subgroup
is not compressed. So, in the situation $G=NS_2\times \Z \times (\Z/2\Z)^2\times
G_5\times \cdots \times G_n=\langle a,b \mid aba^{-1}b\rangle \times \langle c \mid
\,~\rangle \times \langle d \mid d^2\rangle \times \langle e \mid e^2\rangle \times
G_5\times \cdots \times G_n$, consider the automorphism $\phi \colon G\to G$ given by
$a\mapsto a$, $b\mapsto bd$, $c\mapsto ce$, $d\mapsto d$, $e\mapsto e$, and fixing all
elements from $G_5,\ldots ,G_n$ (it is straightforward to check that this is well
defined, as well as its obvious invers). Now, it is not difficult to see that $\fix \phi
=\langle a, b^2, c^2, d, e\rangle \times G_5\times \cdots \times G_n =\langle
a,b^2\rangle \times \langle c^2\rangle \times \langle d\rangle \times \langle e\rangle
\times G_5\times \cdots \times G_n \simeq NS_2 \times \Z \times (\Z/2\Z)^2\times
G_5\times \cdots \times G_n$ which, by Lemma~\ref{ranks}, has rank $\rk(\fix \phi
)=5+\rk (G_5)+\cdots  +\rk (G_n )$; but, as in the example above, $\fix \phi$ is
contained in $\langle a, bc, d,e\rangle \times G_5\times \cdots \times G_n$, which has
rank less than or equal to $4+\rk (G_5)+\cdots  +\rk (G_n )<r(\fix \phi)$ therefore, it
is not compressed in $G$.

\medskip

Now, let us assume $G$ is hyperbolic i.e., the direct product of, possibly, several free
groups $F_r$ with $r\geqslant 2$, several orientable surface groups $S_g$ with
$g\geqslant 2$, and several non-orientable surface groups $NS_k$ with $k\geqslant 3$; it
just remains to see that, in  this case, at most one of the direct summands is not
isomorphic to $NS_3$ (so, forcing $G$ to fall into the forms (hyp1), or (hyp2), or
(hyp3)). We shall prove this by assuming two direct summands in $G$ of the form $F_r$
with $r\geqslant 2$, or $S_g$ with $g\geqslant 2$, or $NS_k$ with $k\geqslant 4$, and
constructing an automorphism whose fixed subgroup is not compressed in $G$.

The free group $F_r=\langle a_1, \ldots ,a_r \mid \, \rangle$, $r\geqslant 2$, admits
the automorphism $\phi \colon F_r \to F_r$, $a_1\mapsto a_1a_2$, $a_2\mapsto a_2$,
$a_i\mapsto a_i$ for $i=3,\ldots ,r$, whose fixed subgroup is $\fix \phi =\langle a_2,
a_1a_2a_1^{-1}, a_3, \ldots ,a_r\rangle$. Note that $\fix \phi \simeq F_r$ and so,
$\rk(\fix \phi )=r=\rk(F_r)$.

The orientable surface group $S_g=\langle a_1, b_1, a_2, b_2, \ldots ,a_g, b_g \mid
[a_1, b_1]\cdots [a_g,b_g] \rangle$, with $g\geqslant 2$, admits the automorphism $\phi
\colon S_g \to S_g$, $a_1\mapsto a_1b_1$, $b_1\mapsto b_1$, $a_2\mapsto a_2b_2$,
$b_2\mapsto b_2$, $a_i\mapsto a_i$, $b_i\mapsto b_i$ for $i=3,\ldots ,g$, whose fixed
subgroup is $\fix \phi =\langle b_1, a_1b_1a_1^{-1}, b_2, a_2b_2a_2^{-1}, a_3, b_3,
\ldots ,a_g, b_g\rangle$. Note here that, because of the defining relation for $S_g$,
$\fix \phi =\langle b_1, a_1b_1a_1^{-1}, b_2, a_3, b_3, \ldots ,a_g, b_g\rangle \simeq
F_{2g-1}$ (in fact, it is a subgroup of $\langle b_1, a_1, b_2, a_3, b_3, \ldots ,a_g,
b_g\rangle$, which is free of rank $2g-1$ by Magnus Freiheitssatz,
see~\cite[Theorem~5.1]{LS}). Observe that if $g=1$ then this fixed subgroup is cyclic
(because $a_1$ and $b_1$ commute in this case), and this is not good for the coming
argument.

Finally, the non-orientable surface group $NS_k=\langle a_1, a_2, \ldots ,a_k \mid \,
a_1^2a_2^2\cdots a_k^2\rangle$ with $k\geqslant 4$, can also be presented as $\langle a,
b, c, d, a_5,\ldots ,a_k \mid aba^{-1}bcdc^{-1}da_5^2\cdots a_k^2\rangle$ (the
isomorphism being essentially the one given in Remark~\ref{centr exc} namely, $a_1
\mapsto a$, $a_2\mapsto a^{-1}b$, $a_3 \mapsto c$, $a_4 \mapsto c^{-1}d$, $a_i\mapsto
a_i$ for $i=5,\ldots ,k$). With this new presentation, it is straightforward to see that
$NS_k$, for $k\geqslant 4$, admits the automorphism $\phi \colon NS_k \to NS_k$,
$a\mapsto ab$, $b\mapsto b$, $c\mapsto cd$, $d\mapsto d$, $a_i\mapsto a_i$ for
$i=5,\ldots ,k$, whose fixed subgroup is $\fix \phi =\langle b, aba^{-1}, d, cdc^{-1},
a_5, \ldots ,a_k\rangle$. Again note that, because of the defining relation for $NS_k$,
$\fix \phi =\langle b, aba^{-1}, d, a_5, \ldots ,a_k\rangle \simeq F_{k-1}$ (as above,
it is a subgroup of $\langle b, a, d, a_5, \ldots ,a_k\rangle$, which is free of rank
$k-1$). Observe also that $k\geqslant 4$ is crucial at this point because we need two
pairs $(a,b)$ and $(c,d)$ to do the trick (with $k=2$, $\phi$ makes perfect sense but
$aba^{-1}=b^{-1}$ and $\fix \phi$ is cyclic, which is not good for the coming argument;
and with $k=3$ one could consider the automorphism $a\mapsto ab$, $b\mapsto b$,
$c\mapsto c$, whose fixed subgroup is $\langle b, aba^{-1}, c\rangle =\langle
b,c\rangle$, not good either for the argument in the next paragraph).

\medskip

Let us retake now the main argument: suppose $G$ has at least two direct summands of the
above form i.e., $n\geqslant 2$ and both $G_1$ and $G_2$ are of the form $F_r$ with
$r\geqslant 2$, or $S_g$ with $g\geqslant 2$, or $NS_k$ with $k\geqslant 4$. Denote by
$x_1, x_2,\ldots , x_{r_1}$ and $y_1, y_2,\ldots , y_{r_2}$ the standard generators for
$G_1$ and $G_2$, respectively, where $r_1=\rk(G_1)$ and $r_2=\rk(G_2)$. By the previous
three paragraphs, we have automorphisms $\phi_1\in \aut(G_1)$ and $\phi_2\in \aut(G_2)$
such that $\fix \phi_1 =\langle x_2, x_1x_2x_1^{-1}, /x_3/, x_4, \ldots ,x_{r_1}\rangle$
and $\fix \phi_2 =\langle y_2, y_1y_2y_1^{-1}, /y_3/, y_4,\ldots ,y_{r_2}\rangle$ are
free on the listed generators, where the notation $/x_3/$ means that we omit this third
generator except in the free ambient case $G_1=F_r$. Hence, for $i=1,2$, $\rk(\fix
\phi_i)=s_i$, where $s_i=r_i$ if $G_i$ is free, and $s_i=r_i-1$ if $G_i$ is a surface
group.

Finally, consider the automorphism $\phi =\phi_1 \times \phi_2 \times Id\times \cdots
\times Id \in \aut(G)$. It happens that
 $$
\begin{array}{rl}
\fix \phi =& \hskip -.3cm \langle x_2, x_1x_2x_1^{-1}, /x_3/, \ldots ,x_{r_1}\rangle \times
\langle y_2, y_1y_2y_1^{-1}, /y_3/, \ldots ,y_{r_2}\rangle \times G_3 \times \cdots \times
G_n \\ \simeq & \hskip -.3cm F_{s_1}\times F_{s_2}\times G_3\times \cdots \times G_n.
\end{array}
 $$
On one hand, by Lemma~\ref{ranks}, we have $\rk(\fix \phi)=s_1+s_2+\rk(G_3)+\cdots
+\rk(G_n)$. On the other hand, since the $x_i$'s commute with the $y_i$'s, $\fix \phi$
is contained in the subgroup
 $$
H=\langle x_2, y_2, x_1y_1, /x_3/,\ldots ,x_{r_1}, /y_3/, \ldots ,y_{r_2}\rangle \times G_3
\times \cdots \times G_n \leqslant G,
 $$
with $\rk (H)\leqslant \rk(\fix \phi)-1$. Therefore, $\fix \phi$ is not compressed in
$G$, concluding the proof.
\end{proof}

\begin{rem}\label{euc inert}
We suspect that the implication in Theorem~\ref{main products 2} is an equivalence. To
see this, two implications are missing.

In the hyperbolic case, one should be able to see that the fixed subgroup of any
automorphism of a group $G$ of the form (hyp1), or (hyp2), or (hyp3) is compressed in
$G$. Any such automorphism is rectangular up to permutation; and we already know that
the fixed subgroup of an automorphism of any component $G_i$ is compressed in $G_i$. Now
$A_i\leqslant G_i$ being compressed in $G_i$ for $i=1,\ldots ,n$, implies that
$A=A_1\times \cdots \times A_n\leqslant G_1\times \cdots \times G_n =G$ satisfies
$r(A)\leqslant r(H)$ for every $H$ of the form $H=H_1\times \cdots \times H_n \leqslant
G$ with $A_i\leqslant H_i$. It remains to study what happens with the non-rectangular
subgroups $H$ such that $A\leqslant H\leqslant G$. (The trick used in the proof of
Theorem~\ref {main products 2} to destroy compressedness playing with such
non-rectangular subgroups does not work for $NS_3$.)

In the euclidean case, we conjecture a little more: the fixed subgroup of any
endomorphism of a group $G$ of the form (euc1), or (euc2), or (euc3), or (euc4) is inert
in $G$. This is obviously true for groups of the form (euc1); with a variation of the
argument in Corollary~\ref{facil} it is straightforward to see it for groups of the
forms (euc2) and (euc3); and it remains to study the case where $G$ is of the form
(euc4). A complete proof for this case would require a detailed analysis of the
automorphisms and endomorphisms of $G$ (note that every direct summand contributes
non-trivially to the center of $G$ and so $G$ admits automorphisms far from being
rectangular, even up to permutation). Moreover, the form of endomorphisms must play an
important role because these groups, even though looking very close to abelian, they
contain subgroups which are not compressed: in fact, following the same idea as in the
proof of Theorem~\ref{main products 2}, inside the group $G=NS_2\times \Z=\langle a,b
\mid aba^{-1}b\rangle \times \langle c \mid \, \rangle$ we have the subgroup $K=\langle
a, bc\rangle$, which contains $H=\langle a, b^2, c^2\rangle$. But $\rk(K)=2$ while
$H\simeq NS_2 \times \Z$ and so $\rk(H)=3$, therefore $H$ is not compressed in $G$.
However, the trick used above to realize $H$ as the fixed subgroup of an automorphism of
$G$ made essential use of two elements of order 2, which are not available now (and, in
fact, $H$ is not the fixed subgroup of any endomorphism of $G$, since it is easy to see
that in $G$ the only solution to the equation $x^2=b^2$ is $x=b$ so, any endomorphism
fixing $b^2$ must also fix $b$).
\end{rem}

\begin{conj}\label{conj compr}
Let $G=G_1\times \cdots \times G_n$, $n\geqslant 1$, be a product group, where each
$G_i$ is a finitely generated free group or a surface group. Then, $\fix \phi$ is
compressed in $G$ for every $\phi \in \aut(G)$ if and only if $G$ is of one of the forms
(euc1), or (euc2), or (euc3), or (euc4), or (hyp1), (hyp2), or (hyp3).
\end{conj}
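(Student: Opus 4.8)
The \emph{only if} direction is precisely Theorem~\ref{main products 2}, so the plan is to attack the \emph{if} direction, handling the euclidean and the hyperbolic families separately, following the roadmap indicated in Remark~\ref{euc inert}.

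For the euclidean forms I would first dispose of (euc1): the group $\Z^p\times(\Z/2\Z)^q$ is abelian and one checks directly that every subgroup is inert, so in particular every fixed subgroup is compressed. For (euc2) and (euc3) I would rerun the abelianization computation of Corollary~\ref{facil}, one $NS_2$ factor at a time, to show that \emph{every} subgroup of such a $G$ is inert, which gives compression of $\fix\phi$ with nothing to spare. The substantive euclidean case is (euc4), $G=NS_2^{\,\ell}\times\Z^p$: here every direct summand feeds the center $Z(G)\simeq\Z^{\ell+p}$, so $G$ carries many non-rectangular automorphisms and, as Remark~\ref{euc inert} notes, even contains non-compressed subgroups. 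My plan is to classify the endomorphisms of $G$ through their interaction with the central filtration $Z(G)\leqslant G$ (with quotient $G/Z(G)\simeq D_\infty^{\,\ell}$), list the resulting possibilities for $\fix\phi$, and verify inertia in each. The decisive point is the rigidity already visible in Remark~\ref{euc inert}: an endomorphism fixing $b^2$ must fix $b$, so the endomorphism constraint rules out exactly the non-compressed subgroups (such as $\langle a,b^2,c^2\rangle$) that could otherwise arise as overgroup obstructions.

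For the hyperbolic forms (hyp1)--(hyp3), write $G=C\times NS_3^{\,\ell}$ with $C$ equal to $F_r$ ($r\geqslant 2$), $S_g$ ($g\geqslant 2$), or $NS_k$ ($k\geqslant 3$). Given $\phi\in\aut(G)$, Proposition~\ref{autos} makes $\phi$ rectangular up to a permutation of the mutually isomorphic blocks; applying the cycle-reduction step from the proof of Theorem~\ref{main products 1} to each cycle then expresses $\fix\phi$ as a direct product $A=A_0\times A_1\times\cdots\times A_t$, where $A_0$ is a fixed subgroup inside the at most one non-$NS_3$ component (compressed in $C$ by Corollary~\ref{compressed surface} or Theorem~\ref{compression of fixed subgp in free gp}) and each $A_j$ is a fixed subgroup inside a single copy of $NS_3$ (inert in $NS_3$ by Corollary~\ref{cor surface2}). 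Since a direct product of compressed subgroups is compressed against every \emph{rectangular} overgroup $H=H_0\times H_1\times\cdots\times H_t$, that subfamily is immediate, and the whole problem is reduced to the non-rectangular overgroups.

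The main obstacle, shared by both remaining cases, is exactly these non-rectangular subgroups $H$ with $A\leqslant H\leqslant G$. In the hyperbolic setting the rank-dropping construction of Theorem~\ref{main products 2} merged one generator from each of two non-$NS_3$ factors into a single element $x_1y_1$ and recovered both conjugation actions $x_1x_2x_1^{-1}=(x_1y_1)x_2(x_1y_1)^{-1}$ and $y_1y_2y_1^{-1}=(x_1y_1)y_2(x_1y_1)^{-1}$; with at most one non-$NS_3$ factor available there is only one such conjugation pair to offer, and the merge cannot save a generator. Turning this heuristic into a genuine lower bound $\rk(H)\geqslant\rk(A)$ for \emph{all} non-rectangular $H$ is where I expect the real difficulty to lie. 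Concretely, I would project $H$ to the centerless coordinates and argue, using $Z(NS_3)=1$ and the cyclic-centralizer description of Lemma~\ref{center-centr}, that any generator of $H$ genuinely mixing two distinct $NS_3$ coordinates must factor through relations already present in a single coordinate, so no rank reduction below the rectangular value is possible; the parallel bookkeeping for the endomorphisms of (euc4) is the other half of the work. Carrying out this coordinate-mixing analysis is what would upgrade Conjecture~\ref{conj compr} to a theorem.
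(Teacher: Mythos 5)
You should first note that the statement you are proving is stated in the paper as Conjecture~\ref{conj compr}: the paper itself has no proof of it. What the paper does prove is exactly the ``only if'' direction (Theorem~\ref{main products 2}), which you correctly identify, and Remark~\ref{euc inert} sketches the same roadmap you propose for the ``if'' direction while explicitly listing what remains open: the case (euc4), and the non-rectangular overgroups $H$ with $\fix\phi\leqslant H\leqslant G$ in the hyperbolic cases. Your proposal, by your own admission in the final sentence, terminates at precisely these two obstacles without resolving them (``classify the endomorphisms \ldots through their interaction with the central filtration'' and ``project $H$ to the centerless coordinates and argue \ldots'' are statements of intent, not arguments). So this is a plan coinciding with the paper's Remark~\ref{euc inert}, not a proof; the genuine mathematical content of the conjecture is untouched.

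Beyond the acknowledged gaps, two of your intermediate steps are wrong as stated. First, your claim that for (euc2) and (euc3) ``\emph{every} subgroup of such a $G$ is inert'' fails for (euc3): for $p\geqslant 1$ the group $NS_2\times \Z^p\times(\Z/2\Z)$ contains $NS_2\times\Z=\langle a,b \mid aba^{-1}b\rangle\times\langle c\rangle$, and the paper's own example in Remark~\ref{euc inert} exhibits $H=\langle a,b^2,c^2\rangle\leqslant K=\langle a,bc\rangle$ with $\rk(H)=3>2=\rk(K)$, so $H$ is not even compressed. Hence an all-subgroups argument cannot work for (euc3); one must already use the fixed-subgroup constraint (e.g.\ the rigidity that an endomorphism fixing $b^2$ fixes $b$) there, not only in (euc4). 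Second, your hyperbolic reduction misdescribes the outcome of the cycle step: for a nontrivial cycle $\sigma$ on $c$ copies of $NS_3$, the fixed subgroup is the diagonal $\{(g,\phi_1(g),\ldots,\phi_{c-1}\cdots\phi_1(g)) \mid g\in\fix(\phi_c\cdots\phi_1)\}$, which spreads across all $c$ coordinates and is \emph{not} a fixed subgroup ``inside a single copy of $NS_3$''; Corollary~\ref{cor surface2} therefore does not apply to it directly, and even your ``immediate'' rectangular-overgroup subcase requires an unproven compression statement for such diagonal subgroups in $NS_3^{\,c}$. So the reduction is incomplete even before reaching the non-rectangular overgroups you flag as the main difficulty.
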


As the following result expreses, inertia for the fixed subgroup of every automorphism
is an even stronger condition, at least for the hyperbolic case.

\begin{thm}\label{main products 3}
Let $G=G_1\times \cdots \times G_n$, $n\geqslant 1$, be a product group, where each
$G_i$ is a finitely generated free group or a surface group. If $\fix \phi$ is inert in
$G$ for every automorphism $\phi \in \aut(G)$, then $G$ is of one of the forms (euc1),
or (euc2), or (euc3), or (euc4), or
\begin{itemize}
\item[(\emph{hyp1'})] $G=F_r$ for some $r\geqslant 2$; or
\item[(\emph{hyp2'})] $G=S_g$ for some $g\geqslant 2$; or
\item[(\emph{hyp3'})] $G=NS_k$ for some $k\geqslant 3$.
\end{itemize}
\end{thm}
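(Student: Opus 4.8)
The plan is to bootstrap from the compression result, Theorem~\ref{main products 2}. Since every inert subgroup is compressed, the hypothesis of Theorem~\ref{main products 3} implies that of Theorem~\ref{main products 2}, so $G$ is already one of the seven forms (euc1)--(euc4), (hyp1)--(hyp3). The four euclidean forms occur verbatim in the target list, hence require nothing, and the whole task is to sharpen the hyperbolic forms (hyp1)--(hyp3) into the single-factor forms (hyp1'), (hyp2'), (hyp3'). Writing such a $G$ as $G=G_1\times\cdots\times G_n$ with every $G_i$ non-abelian (free of rank $\geqslant 2$, or a surface group with $\chi<0$), I must show that $n\geqslant 2$ contradicts the hypothesis. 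I will do this by producing, whenever $n\geqslant 2$, an automorphism of $G$ whose fixed subgroup fails to be inert.

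For the construction, take any $1\neq h\in G_1$ and let $\phi\in\aut(G)$ be conjugation by $(h,1,\ldots,1)$. Then $\fix\phi=\cen_G(h,1,\ldots,1)=\cen_{G_1}(h)\times G_2\times\cdots\times G_n$ by Lemma~\ref{center-centr}, and $\cen_{G_1}(h)\cong\Z$ by Lemma~\ref{centr} (and Remark~\ref{centr exc} in the free case). As $G_1$ is non-abelian, $\cen_{G_1}(h)\neq G_1$, so I may choose $s\in G_1\setminus\cen_{G_1}(h)$; and as $G_2$ is non-abelian I may choose $p,q\in G_2$ generating a free subgroup of rank $2$. Put $u=(1,p,1,\ldots,1)$, $v=(s,q,1,\ldots,1)$ and $K=\langle u,v\rangle\leqslant G$. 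The projection $G\to G_2$ sends $u\mapsto p$ and $v\mapsto q$, so $K$ surjects onto $\langle p,q\rangle\cong F_2$; being $2$-generated, $K$ is then itself free of rank $2$.

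It remains to compute $K\cap\fix\phi$. The projection $\rho\colon K\to G_1$ sends $u\mapsto 1$ and $v\mapsto s$, so $\rho(k)=s^{e(k)}$, where $e\colon K\to\Z$ is the exponent-sum of $v$; thus $k\in\fix\phi$ exactly when $s^{e(k)}\in\cen_{G_1}(h)$. If $s^{e}\in\cen_{G_1}(h)$ with $e\neq 0$, then $h$ commutes with $s^{e}\neq 1$, so both $h$ and $s$ lie in $\cen_{G_1}(s^{e})$, which is infinite cyclic; hence $h$ and $s$ commute, i.e. $s\in\cen_{G_1}(h)$, contradicting the choice of $s$. Therefore $s^{e}\in\cen_{G_1}(h)$ forces $e=0$, and $K\cap\fix\phi=\ker e$. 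But $\ker e$ is the kernel of an epimorphism $F_2\twoheadrightarrow\Z$ (equivalently, the normal closure of $u$ in $K$), hence free of infinite rank. Thus $\rk(K\cap\fix\phi)=\infty>2=\rk(K)$, so $\fix\phi$ is not inert in $G$. This rules out $n\geqslant 2$ and forces every hyperbolic $G$ satisfying the hypothesis to be a single factor, i.e. of type (hyp1'), (hyp2') or (hyp3'), completing the proof.

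The point I expect to be essential — and where this argument departs from those of Theorems~\ref{main products 1} and~\ref{main products 2} — is that in the hyperbolic (centreless, torsion-free) setting inertia cannot be broken by a finite-index jump. The euclidean-type device of arranging $v\notin\fix\phi$ but $v^{2}\in\fix\phi$ relies on an order-two central element and is unavailable here, since unique roots give $\phi(v)^{2}=v^{2}\Rightarrow\phi(v)=v$. Inertia must instead fail through an \emph{infinite}-rank intersection, which the construction produces by hiding the fixed generator $u$ in a complementary direct factor, so that all $K$-conjugates of $u$ stay fixed while $v$ does not. The only delicate verifications are that $K\cong F_2$ and that $K\cap\fix\phi$ is exactly $\ker e$; both reduce to the infinite-cyclic-centralizer property of the factors, which is the hypothesis genuinely being used.
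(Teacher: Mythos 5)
Your proposal is correct, and while the opening bootstrap through Theorem~\ref{main products 2} is exactly the paper's first move (inert implies compressed, the euclidean forms pass through verbatim, and only the hyperbolic forms need sharpening), your construction for ruling out $n\geqslant 2$ is genuinely different from the paper's. The paper exploits the specific shape of (hyp1)--(hyp3): with $n\geqslant 2$ there is a factor $G_2=NS_3=\langle c,d,e \mid cdc^{-1}de^2\rangle$, and it takes the \emph{non-inner} automorphism $\phi_2\colon c\mapsto cd$, $d\mapsto d$, $e\mapsto e$ extended by the identity, then intersects $\fix\phi$ with a twisted copy $K$ of $G_1$ (e.g.\ $K=\langle ca_1,a_2,\ldots ,a_r\rangle$ when $G_1=F_r$), obtaining $\fix\phi\cap K=\ker\pi$ for the exponent map $\pi\colon G_1\twoheadrightarrow \Z$; this requires a three-way case analysis over $G_1=F_r,\,S_g,\,NS_k$ and recycles the infinite-generation arguments from Proposition~\ref{exs no BH}. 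You instead take $\phi$ \emph{inner} (conjugation by $(h,1,\ldots ,1)$), so $\fix\phi=\cen_{G_1}(h)\times G_2\times\cdots\times G_n$ with $\cen_{G_1}(h)\simeq\Z$, and intersect with a rank-two free subgroup $K=\langle u,v\rangle$ that hides the moving direction in the first coordinate of $v$; everything then rests on the infinite-cyclic-centralizer property (Lemma~\ref{centr} and Remark~\ref{centr exc}), which also supplies the two implicit facts you use, namely torsion-freeness of $G_1$ (so $s^e\neq 1$ for $e\neq 0$) and that $h,s\in\cen_{G_1}(s^e)\simeq\Z$ forces $s\in\cen_{G_1}(h)$. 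Your verifications are sound: $K\simeq F_2$ because the coordinate projection to $G_2$ sends $u,v$ to a basis of $\langle p,q\rangle\simeq F_2$ (such $p,q$ exist in every hyperbolic factor, e.g.\ by the Freiheitssatz, as the paper itself notes), and $K\cap\fix\phi=\ker e$ is the normal closure of $u$ in $K$, hence of infinite rank. Your route buys uniformity (a single construction, no case analysis, no use of the $NS_3$ factor beyond its hyperbolicity) and the slightly stronger conclusion that in any hyperbolic product group with $n\geqslant 2$ already some inner automorphism has non-inert fixed subgroup; the paper's route buys economy, reusing kernel computations already in hand.
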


\begin{proof}
Assume that $\fix \phi$ is inert in $G$ for every $\phi \in \aut(G)$. In particular
every $\phi \in \aut(G)$ has $\fix \phi$ being compressed in $G$ and, by
Theorem~\ref{main products 2}, $G$ is of one of the forms (euc1), (euc2), (euc3),
(euc4), or (hyp1), (hyp2), or (hyp3). In the euclidean case, we are done; in the
hyperbolic case it just remains to see that $n=1$.

We shall prove this by assuming $G$ of the form (hyp1), (hyp2), or (hyp3) with
$n\geqslant 2$, and constructing an automorphisms of $G$ whose fixed subgroup is not
inert in $G$. In these three situations, we have $G=G_1\times G_2\times \cdots \times
G_n$ with $n\geqslant 2$, $G_1 =F_r, S_g$, or $NS_k$, for some $r\geqslant 2$,
$g\geqslant 2$, or $k\geqslant 3$, respectively, and $G_2= NS_3= \langle c, d, e \mid
cdc^{-1}de^2\rangle$. Consider the automorphism $\phi_2 \colon NS_3 \to NS_3$, $c\mapsto
cd$, $d\mapsto d$, $e\mapsto e$, and note that $\fix \phi_2 \cap \langle c\rangle =1$
(one can see this, for example, abelianizing). Now consider $\phi =Id\times \phi_2
\times Id \times \cdots \times Id \in \aut(G)$, and we claim that $\fix \phi =G_1\times
\fix \phi_2 \times G_3\times \cdots \times G_n$ is not inert in $G$.

Suppose $G_1 =F_r =\langle a_1, \ldots ,a_r \mid \, \rangle$ with $r\geqslant 2$. Take
$K=\langle ca_1, a_2, \ldots ,a_r\rangle \leqslant G$, and consider the projection
$\pi\colon F_r \twoheadrightarrow \Z$, $w\mapsto |w|_1$. It is easy to see that $\fix
\phi \cap K=\fix (Id\times \phi_2)\cap K =(F_r\times \fix \phi_2 )\cap K$ equals $\ker
\pi$, which is a normal subgroup of infinite index in $F_r$, so infinitely generated.
In fact, every element in $\ker \pi$ is a word $w(a_1, a_2, \ldots ,a_r)$ with
$|w|_1=0$ and so, $w(a_1, a_2, \ldots ,a_r)=c^{|w|_1}w(a_1, a_2, \ldots ,a_r)=w(ca_1,
a_2, \ldots ,a_r)\in K$ since $c$ commutes with all the $a_i$'s; and conversely, if
$w(a_1, a_2, \ldots ,a_r)v=w'(ca_1, a_2, \ldots ,a_r)$ for some $v\in \fix \phi_2$,
then $w(a_1, a_2, \ldots ,a_r)v=w'(a_1, a_2, \ldots ,a_r)c^{|w'|_1}$ which implies
$w=w'$, $v=1$, and $|w'|_1=0$ therefore, $w(a_1, a_2, \ldots ,a_r)v\in \ker \pi$.

Suppose now that $G_1=S_g=\langle a_1,b_1,\ldots,a_g,b_g \mid [a_1, b_1]\cdots [a_g,
b_g]\rangle$ with $g\geqslant 2$. Take $K=\langle ca_1, b_1, a_2, b_2, \ldots ,a_g,
b_g\rangle \leqslant G$, and consider the projection $\pi\colon S_g \twoheadrightarrow
\Z$, $w\mapsto |w|_1$. The same argument as above shows that $\fix \phi \cap K=\ker
\pi$, which in this case is infinitely generated as well, by the argument given in the
proof of Proposition~\ref{exs no BH} (case $G_2=S_g$).

Finally, suppose $G_1=NS_k =\langle a, b, a_3,\ldots,a_k \mid aba^{-1}ba_3^2\cdots
a_k^2\rangle$ with $k\geqslant 3$, and consider the projection $\pi\colon NS_k
\twoheadrightarrow \Z$, $w\mapsto |w|_1$ (which coincides with that in the proof of
Proposition~\ref{exs no BH} case $G_2=NS_k$, where it is expressed with respect to the
other usual presentation of $NS_k$). The exact same argument as in the previous case
shows that $\fix \phi \cap K=\ker \pi$, which is again infinitely generated.
\end{proof}

A positive solution to Conjectures~\ref{conj} and~\ref{conj surfaces}, and to that
suggested in Remark~\ref{euc inert}, would give a positive solution to the following
one.

\begin{conj}\label{conj inert}
Let $G=G_1\times \cdots \times G_n$, $n\geqslant 1$, be a product group, where each
$G_i$ is a finitely generated free group or a surface group. Then, the following are
equivalent:
\begin{itemize}
\item[(a)] every $\phi \in \edo(G)$ satisfies that $\fix \phi$ is inert in $G$,
\item[(b)] every $\phi \in \aut(G)$ satisfies that $\fix \phi$ is inert in $G$,
\item[(c)] $G$ is of the form (euc1), or (euc2), or (euc3), or (euc4), or (hyp1'), or
    (hyp2'), or (hyp3').
\end{itemize}
\end{conj}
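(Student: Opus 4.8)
The plan is to establish the cycle of implications $(a)\Rightarrow (b)\Rightarrow (c)\Rightarrow (a)$, which suffices for the three-way equivalence. The implication $(a)\Rightarrow (b)$ is immediate since $\aut(G)\subseteq \edo(G)$. The implication $(b)\Rightarrow (c)$ is precisely the content of Theorem~\ref{main products 3} already proved above: if $\fix\phi$ is inert in $G$ for every $\phi\in\aut(G)$, then $G$ is forced into one of the seven listed forms. Hence all the genuine content sits in the remaining implication $(c)\Rightarrow (a)$, which asserts that, for each of these seven forms, \emph{every} endomorphism of $G$ has inert fixed subgroup. I would split this into the hyperbolic single-factor cases (hyp1$'$), (hyp2$'$), (hyp3$'$) and the euclidean cases (euc1)--(euc4).

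In the hyperbolic forms, $n=1$ and $G$ is a single free or surface group: $G=F_r$ with $r\geqslant 2$ in (hyp1$'$), $G=S_g$ with $g\geqslant 2$ in (hyp2$'$), and $G=NS_k$ with $k\geqslant 3$ in (hyp3$'$). For these the statement ``every endomorphism has inert fixed subgroup'' \emph{is} exactly Conjecture~\ref{conj} (free case) and Conjecture~\ref{conj surfaces} (the two surface cases). So $(c)\Rightarrow (a)$ in the hyperbolic regime is nothing more than these two conjectures, invoked factor by factor; no product structure intervenes.

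For the euclidean forms I would argue directly, along the lines indicated in Remark~\ref{euc inert}. Case (euc1), $G=\Z^p\times(\Z/2\Z)^q$, enjoys the monotonicity $H\leqslant K\Rightarrow \rk(H)\leqslant\rk(K)$ (the free rank and the dimension of the elementary $2$-torsion are each monotone under inclusion, and $\rk$ is their sum), so inertia is trivial for \emph{any} subgroup, in particular for any fixed subgroup. Cases (euc2) and (euc3), where exactly one $NS_2$ factor appears, I would handle by a refinement of the abelianization argument of Corollary~\ref{facil}, bounding $\rk(K\cap\fix\phi)$ through the projection to $G^{\rm ab}$ together with the intersection with the commutator subgroup $\langle b^2\rangle$ (normal in $NS_2$), and then invoking Lemma~\ref{ranks} componentwise. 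The genuinely hard case is (euc4), $G=NS_2^{\,\ell}\times\Z^p$: here $G$ already contains subgroups that are not even compressed (the example $H=\langle a,b^2,c^2\rangle\leqslant K=\langle a,bc\rangle$ inside $NS_2\times\Z$ in Remark~\ref{euc inert}), so a purely rank-monotone argument is impossible. One must instead classify $\edo(NS_2^{\,\ell}\times\Z^p)$ and show that, despite these non-compressed subgroups, none of them arises as a fixed subgroup; the crucial local fact, as noted in the remark, is that the only solution of $x^2=b^2$ in $NS_2$ is $x=b$, so any endomorphism fixing $b^2$ already fixes $b$.

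The main obstacle is concentrated in two places. First, the hyperbolic implication is only as strong as the still-open Conjectures~\ref{conj} and~\ref{conj surfaces}, so the statement is genuinely conditional on them; this is unavoidable, since for $n=1$ the three forms (hyp$i'$) literally reduce to those conjectures. Second, even granting them, case (euc4) is the step I expect to be most delicate, precisely because every direct summand contributes to the center and so $G$ admits endomorphisms far from rectangular (even up to permutation). I would attempt it by reducing a general endomorphism to a normal form adapted to the commutator subgroups $\langle b_i^2\rangle$ and to the large central free-abelian quotient, tracking the induced actions separately, and using the $x^2=b^2$ rigidity to rule out the non-compressed subgroups as fixed subgroups; Lemma~\ref{ranks} would then assemble the componentwise rank bounds into the desired inertia inequality.
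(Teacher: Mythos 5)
You were asked to prove what the paper itself states only as Conjecture~\ref{conj inert}; the paper contains no proof of it, its entire justification being the one-sentence observation preceding it: $(a)\Rightarrow(b)$ is trivial since $\aut(G)\subseteq\edo(G)$, $(b)\Rightarrow(c)$ is Theorem~\ref{main products 3}, and $(c)\Rightarrow(a)$ \emph{would follow} from positive solutions to Conjecture~\ref{conj}, Conjecture~\ref{conj surfaces}, and the claims suggested in Remark~\ref{euc inert}. Your proposal reconstructs exactly this skeleton and honestly locates the open content in the same two places the paper does: the single-factor hyperbolic cases (hyp1$'$)--(hyp3$'$) reduce factor-free to the two inertia conjectures, and among the euclidean cases (euc4) is the delicate one, where the non-compressed subgroup $\langle a,b^2,c^2\rangle\leqslant\langle a,bc\rangle$ blocks any rank-monotonicity argument and the paper's suggested rescue is precisely the $x^2=b^2$ rigidity in $NS_2$ you cite. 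So your attempt matches the paper's approach, but like the paper's it is a conditional reduction, not a proof: the implication $(c)\Rightarrow(a)$ remains open, and your sketches for (euc2)--(euc4) are programmes rather than arguments (the paper's Remark~\ref{euc inert} likewise only asserts, without details, that a variation of Corollary~\ref{facil} handles (euc2) and (euc3)). One thing you could have added unconditionally: the smallest hyperbolic instances of $(c)\Rightarrow(a)$ are already closed by results in the paper, since Corollary~\ref{cor free 2} settles $G=F_2$ (every endomorphism of $F_2$ has image of rank at most $2$) and Corollary~\ref{cor surface2} settles $G=NS_3$; your monotonicity argument for (euc1) is correct and is indeed the case the paper calls obviously true.
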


\begin{cor}
For product groups, the ``compressed-inert" conjecture is false even for fixed
subgroups of automorphisms i.e., there exists a product group $G$ and $\phi \in
\aut(G)$ such that $\fix \phi$ is compressed in $G$ but not inert in $G$.
\end{cor}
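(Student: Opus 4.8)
The plan is to produce a single explicit example, recycling the automorphism already built in the proof of Theorem~\ref{main products 3}. Take the hyperbolic product group $G=F_2\times NS_3$, with $F_2=\langle a_1,a_2\mid\,\rangle$ and $NS_3=\langle c,d,e\mid cdc^{-1}de^2\rangle$, and let $\phi=\mathrm{id}_{F_2}\times\phi_2$, where $\phi_2\in\aut(NS_3)$ sends $c\mapsto cd$, $d\mapsto d$, $e\mapsto e$. That $\fix\phi$ is \emph{not} inert in $G$ is exactly the content of the proof of Theorem~\ref{main products 3} (case $G_1=F_r$ with $r=2$): the rank-two subgroup $K=\langle ca_1,a_2\rangle$ meets $\fix\phi=F_2\times\fix\phi_2$ in the kernel of $\pi\colon F_2\twoheadrightarrow\Z$, $w\mapsto|w|_1$, which is infinitely generated. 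So the whole task reduces to proving that this same $\fix\phi$ \emph{is} compressed in $G$.

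The first step is to pin down the rank of $\fix\phi$. The defining relator gives $cdc^{-1}=e^{-2}d^{-1}\in\langle d,e\rangle$, so the evidently fixed elements $d,e$ generate a copy of $F_2$ inside $\fix\phi_2$ (the subgroup $\langle d,e\rangle$ is free of rank $2$ by the Freiheitssatz). On the other hand $\phi_2\neq\mathrm{id}$, so Theorem~\ref{WZ thm}(i) yields the \emph{strict} bound $\rk(\fix\phi_2)<\rk(NS_3)=3$; being a proper subgroup of rank at most $2$, $\fix\phi_2$ is free by Lemma~\ref{subgp of surface gp}(i), and since it contains $F_2$ its rank is exactly $2$. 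Hence, by Lemma~\ref{ranks}, $\rk(\fix\phi)=\rk(F_2)+\rk(\fix\phi_2)=2+2=4$.

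The compression itself I would establish by passing to abelianizations, which is what allows one to control \emph{all} intermediate subgroups simultaneously, including the non-rectangular ones that are precisely the obstruction flagged in Remark~\ref{euc inert}. Write $\mathrm{ab}\colon G\to G^{\mathrm{ab}}=\Z^4\oplus\Z/2\Z$. Since $\fix\phi\supseteq F_2\times\langle d,e\rangle$, its image contains $\mathrm{ab}(F_2)\times\mathrm{ab}(\langle d,e\rangle)=\Z^2\times\langle\bar d,\bar e\mid 2\bar d+2\bar e\rangle\cong\Z^3\oplus\Z/2\Z$, whose minimal number of generators is $4$ (here the surface relation, hence the $\Z/2\Z$-torsion of $NS_3^{\mathrm{ab}}$, is what keeps this number at $4$). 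Now take any finitely generated $H$ with $\fix\phi\leqslant H\leqslant G$. Then $\mathrm{ab}(\fix\phi)\leqslant\mathrm{ab}(H)$ inside the finitely generated abelian group $G^{\mathrm{ab}}$; using that the minimal number of generators is monotone under inclusion of subgroups of a finitely generated abelian group (a submodule over the PID $\Z$ needs no more generators than the ambient module), and that $\mathrm{ab}(H)$ is a quotient of $H^{\mathrm{ab}}$, we obtain $4\leqslant\rk(\mathrm{ab}(\fix\phi))\leqslant\rk(\mathrm{ab}(H))\leqslant\rk(H^{\mathrm{ab}})\leqslant\rk(H)$. Therefore $\rk(H)\geqslant 4=\rk(\fix\phi)$, i.e.\ $\fix\phi$ is compressed, and the corollary follows.

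I expect the compression half to be the main obstacle, and within it the need to bound the rank of \emph{arbitrary} overgroups $H$ (not just rectangular products); the abelianization trick sidesteps this cleanly, but it succeeds only thanks to the numerical coincidence that $\mathrm{ab}(\langle d,e\rangle)$ already realizes the full rank $4$ of $\fix\phi$. For this reason the point I would verify most carefully is that the order-two element genuinely survives in $\mathrm{ab}(\fix\phi)$ (equivalently that $\rk(\fix\phi_2)=2$ rather than $3$): were the abelianized fixed image to drop to free rank, the bound would only give $\rk(H)\geqslant 3<4$ and the argument would collapse.
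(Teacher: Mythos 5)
Your proof is correct, but it takes a genuinely different route from the paper's. The paper settles the corollary with a much lighter example: $G=F_2\times \Z=\langle a,b\rangle\times\langle c\rangle$ and $\phi$ fixing $a,b$ and inverting $c$, so that $\fix\phi=\langle a,b\rangle$; compression there is immediate (any overgroup of $F_2$ is non-abelian, hence has rank at least $2$), and non-inertia follows from $\fix\phi\cap\langle ca,b\rangle$ being the normal closure of $b$ in $F_2$, exactly the exponent-sum trick you also use. You instead take the hyperbolic-type group $G=F_2\times NS_3$ with the automorphism from Theorem~\ref{main products 3}, which makes the non-inertia half a citation but forces you to do real work for compression: your abelianization argument --- pinning $\rk(\fix\phi)=4$ via the Freiheitssatz, Theorem~\ref{WZ thm}(i) and Lemma~\ref{subgp of surface gp}(i), then bounding every intermediate $H$ through the chain $4\leqslant\rk(\mathrm{ab}(\fix\phi))\leqslant\rk(\mathrm{ab}(H))\leqslant\rk(H)$ using monotonicity of minimal generating number for subgroups of finitely generated abelian groups --- is sound, and I verified the delicate point you flagged: the image of $\langle d,e\rangle$ in $NS_3^{\mathrm{ab}}=\Z^3/\langle 2\bar d+2\bar e\rangle$ is indeed $\Z\oplus\Z/2\Z$, so the $2$-torsion survives and $\mathrm{ab}(\fix\phi)\otimes\Z/2\Z$ has dimension $4$, which is what saves the bound. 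What your approach buys beyond the paper's: it handles arbitrary (in particular non-rectangular) overgroups $H$, thereby proving an actual instance of Conjecture~\ref{conj compr} for a group of form (hyp1) with $\ell\geqslant 1$, i.e.\ it exhibits the compressed-but-not-inert phenomenon \emph{inside} the hyperbolic class, which the paper only conjectures; what it costs is length and the numerical coincidence you correctly identified as the fragile point. Two tiny remarks: the Freiheitssatz as stated in the paper is phrased for the standard generators of surface groups, whereas you apply it to the presentation $\langle c,d,e\mid cdc^{-1}de^2\rangle$ --- this is fine, but you should invoke the general one-relator Freiheitssatz (the relator is cyclically reduced and involves $c$), or transport back to standard generators; and note that for compression you only need $\rk(\fix\phi)\leqslant 4$, which you already have from $\rk(\fix\phi_2)\leqslant 2$, so the exact computation of the rank, while correct, is slightly more than required.
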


\begin{proof}
Conjectures~\ref{conj compr} and~\ref{conj inert} already suggest that such $G$ and
$\phi$ do exist. The easiest example is $G=F_2\times \Z =\langle a,b \mid \, \rangle
\times \langle c \mid \, \rangle$ and $\phi \colon G\to G$, $a\mapsto a$, $b\mapsto b$,
$c\mapsto c^{-1}$. Clearly, $\fix \phi =\langle a,b\rangle$ is compressed. But $\fix
\phi \cap \langle ca,b\rangle$ is the normal closure of $b$ in $F_2$, which is
infinitely generated. Hence, $\fix \phi$ is not inert in $G$.
\end{proof}

\bigskip

\noindent\textbf{Acknowledgements.} We thank Laurent Gwena\"el for an interesting
conversation leading us to the idea of considering product groups. The first named
author is partially supported by NSFC (No. 11201364). The second one acknowledges
partial support from the Spanish Government through grant number MTM2011-25955. The
third named author is funded by China Scholarship Council, partially supported by NSFC
(No. 11271276), and thanks the hospitality of the Universitat Polit\`ecnica de Catalunya
in hosting him as a postdoc during the academic course 2014--2015, while this research
was conducted.


\end{document}